\documentclass[a4paper,10pt,reqno]{amsart}
\numberwithin{equation}{section}
\usepackage{latexsym,amssymb}
\usepackage{hyperref}
\usepackage{amsmath,amsthm}
\usepackage{amsfonts}
\usepackage[american]{babel}
\usepackage{mathrsfs}
\usepackage{psfrag}
\usepackage{epsfig,inputenc,todonotes}
\usepackage{graphpap,latexsym,epsf,eucal}
\usepackage{color,xcolor}
\usepackage[normalem]{ulem}
\usepackage[eulergreek]{sansmath}

\usepackage{tcolorbox}

\newcommand*{\colorboxed}{}
\def\colorboxed#1#{%
  \colorboxedAux{#1}%
}
\newcommand*{\colorboxedAux}[3]{%
  \begingroup
    \colorlet{cb@saved}{.}%
    \color#1{#2}%
    \boxed{%
      \color{cb@saved}%
      #3%
    }%
  \endgroup
}

\usepackage{amssymb,eucal,paralist,color,enumerate,upgreek}
\usepackage{graphicx}
\usepackage{tikz}

\usepackage{bm}

\usetikzlibrary{
    calc,
    arrows.meta,
    positioning,
    decorations.pathreplacing,
    patterns
}

\newcommand{\R}{\mathbb{R}}
\newcommand{\N}{\mathbb{N}}

\mathchardef\emptyset="001F

\numberwithin{equation}{section}

\newtheorem{theorem}{Theorem}[section]

\newtheorem{lemma}[theorem]{Lemma}
\newtheorem{remark}[theorem]{Remark}

\newtheorem{definition}[theorem]{Definition}
\newtheorem{proposition}[theorem]{Proposition}

\newcommand{\eps}{\varepsilon}

\newcommand{\weakto}{\rightharpoonup} 

\newcommand{\aein}{\text{a.e.\ in }}

\newcommand{\down}{\downarrow}

\newcommand{\dualoperator}

 \def\calE{{\mathcal E}} \def\calF{{\mathcal F}}
  
 \def\calK{{\mathcal K}} \def\calL{{\mathcal L}}
  
 \def\calQ{{\mathcal Q}} \def\calR{{\mathcal R}}

  \def\rmC{{\mathrm C}}
\def\rmD{{\mathrm D}}  
 \def\rmH{{\mathrm H}} 
  \def\rmL{{\mathrm L}}

 \def\rmW{{\mathrm W}}

\def\BS{\boldsymbol} 
\def\BS{\boldsymbol} 

\def\dd{\;\!\mathrm{d}} 

\newcommand{\nchi}{{\raise.2ex\hbox{$\chi$}}}

\definecolor{ddcyan}{rgb}{0,0.1,0.9}
\definecolor{ddmagenta}{rgb}{0.8,0,0.8}
\definecolor{orange}{rgb}{0.6,0.2,0}
\definecolor{vgreen}{rgb}{0.1,0.5,0.2}

\newcommand{\piecewiseConstant}[2]{\overline{#1}_{\kern-1pt#2}}

\newcommand{\upiecewiseConstant}[2]{\underline{#1}_{\kern-1pt#2}}

\newcommand{\piecewiseLinear}[2]{{#1}_{\kern-1pt#2}}

\newcommand{\piecewiseVariational}[2]{\tilde{#1}_{\kern-1pt#2}}

\newcommand{\foraa}{\text{for a.a. }}

\newcommand\JUMP[1]{\mathchoice
                  {\big[\hspace*{-.3em}\big[#1\big]\hspace*{-.3em}\big]}
                   {[\hspace*{-.15em}[#1]\hspace*{-.15em}]}
                   {[\![#1]\!]}
                   {[\![#1]\!]}}

\newcommand{\BV}{\mathrm{BV}}

\newcommand{\Hs}{\mathrm{W}^{1,q}}

\newcommand{\As}{A_q}

 \def\trait #1 #2 #3 {\vrule width #1pt height #2pt depth #3pt}

 \def\fin{\hfill
         \trait .3 5 0
         \trait 5 .3 0
         \kern-5pt
         \trait 5 5 -4.7
         \trait 0.3 5 0
 \medskip}

\newcommand{\RR}{\mathrm{R}}
\newcommand{\resmap}{\mathrm{T}_\eps}
\newcommand{\resop}{\mathbb{T}_\eps}
\newcommand{\res}[1]{\mathfrak{#1}}

\newcommand{\EEE}{\color{black}}

\newcommand{\RED}{\color{red}}

\newcommand{\uu}{\boldsymbol{u}}
\newcommand{\vv}{\boldsymbol{v}}

\newcommand{\effe}{\boldsymbol{f}}

\newcommand{\GC}{\Gamma_{\!\scriptscriptstyle{\rm C}}}

\newcommand{\GDir}{\Gamma_{\!\scriptscriptstyle{\rm Dir}}}
\newcommand{\CLOD}{\overline{\Omega}_{\!\scriptscriptstyle{\rm D}}}
\newcommand{\OD}{\Omega_{\!\scriptscriptstyle{\rm D}}}
\newcommand{\ODeta}{\Omega_{\!\scriptscriptstyle{\rm D}, \eta}}
\newcommand{\ODeps}{\Omega_{\!\scriptscriptstyle{\rm D}, \eps}}

\newcommand{\Omegaep}{\Omega_{+}^\eps}
 \newcommand{\Omegaem}{\Omega_{-}^\eps}
 
\newcommand{\WD}{\rmW_{\!\scriptscriptstyle{\rm D}}}

\newcommand{\ti}{{\times}}

\newcommand{\gr}{\upgamma}

\newcommand{\Xeps}{\boldsymbol{X}_\eps}
\newcommand{\resXeps}{\res{X}_\eps}

\newcommand{\resen}{\res{E}_{\eps_n}}

\newcommand{\resX}{\res{X}}

\newcommand{\TU}{\stackrel{\tau_{U}}{\longrightarrow}}

\newcommand{\weaktoX}{\stackrel{\res{X}}{\rightharpoonup}}

\newcommand{\plc}{\mathfrak{g}}
\newcommand{\plcc}{\mathfrak{h}}
\newcommand{\recop}[2]{\Upsilon^{#1}_{#2}}
\newcommand{\intp}{\boldsymbol{\imath}}

\title[]{From  damage to  delamination via evolutionary Gamma-convergence in a rate-independent  quasibrittle \EEE  regime}

\author{Giovanna Bonfanti, Elisa Davoli, Riccarda Rossi, Marita Thomas}

\begin{document}

\maketitle

\hskip 6.2cm {\it {Dedicated to Fr\'ed\'eric Lebon on the occasion of his 65th birthday}}

\begin{abstract}
 We analyze via Evolutionary Gamma-convergence a stratified composite structure consisting of 
a thin adhesive layer with vanishing thickness and undergoing  rate-independent \EEE damage, as well as two adjacent elastic adherents. As the width of the intermediate layer tends to zero, we prevent complete degradation of the material by assuming that the damage variable scales minimally like the thickness of the adhesive layer. As a result, we identify a limiting model that combines both a brittle constraint and an adhesive-type energy contribution featuring the jump of the admissible displacements. \EEE
\end{abstract}

\noindent
{\footnotesize \textbf{Keywords:}  rate-independent \EEE damage, Evolutionary Gamma-convergence, brittle constraint, adhesive contact}

\noindent{\footnotesize \textbf{MSC 2020:}  35A15, 35Q74, \EEE 74R05, 74R10, 49J45, 49J53, 49S05
}
 \bigskip

\centerline{\today}

\section{Introduction}

In this paper  
we will   deduce a delamination model as the scaling-limit of a volume damage model when the thickness of the damageable layer tends to zero. This damageable layer can be interpreted as the fracture process zone occurring in  \emph{quasibrittle} materials. 
\par
Indeed,  as we will see, 
the resulting limit delamination model features two surface energy terms along 
the damageable  limit interface.  
Firstly, it contains a term that implements the brittle constraint arising in models for brittle, Griffith-type delamination, ensuring that  a diplacement jump is admissible only where the interface has fully damaged,  i.e., where a crack has appeared. 
Additionally, it also contains a second surface energy that accounts for the displacement jumps 
 across the interface,  as in cohesive zone models.
But, unlike  the standard  cohesive zone models in mathematical literature, the dissipation potential of our limit model does not  track the maximal crack opening. 
In order to distinguish the model discussed in this work from the models for cohesive and brittle delamination, as well as  from the models for \EEE adhesive contact,   while stressing    its characteristic features, we  have opted for  calling it   a \emph{quasibrittle} delamination model. \EEE

\subsection{An overview of bulk-to-surface limits for evolution problems in damage and delamination}
Over the past decades, laminated composites have become widely used in engineering, with applications ranging from aerospace and automotive to civil structures. These systems are typically assembled by bonding, where thin adhesive layers play a key role in ensuring structural continuity, distributing loads, and reducing stress concentration. 
However, the mechanical behavior of these interphases is a critical issue: damage can arise and propagate at multiple spatial scales. 
Microscopic phenomena such as matrix cracking, fiber breakage, and fiber-matrix debonding can manifest at the macroscopic scale as crack propagation, delamination, and loss of structural stability.
Accurately modeling damage in these interphases is therefore essential to describe the mechanical behavior of the whole structure. At the same time, the small thickness of adhesive layers suggests the usage of dimension reduction techniques to investigate their asymptotic behavior as their thickness tends to zero.
From a mathematical point of view, the derivation of lower dimensional models for thin structures starting from their three dimensional counterparts has been tackled in the literature using different analytical
tools and modeling approaches. 

In \cite{BBLR17,BBL19,RLR22}, formal asymptotic expansions have been used to derive imperfect interface models for  adhesive contact between (thermo)elastic solids. In the limiting configuration, the thin layer reduces to a contact surface, where the mechanical interaction between the adherents is characterized by so-called \emph{imperfect} transmission conditions and the evolution of a surface damage parameter is governed by a suitable flow rule. 
The same asymptotic approach has been applied to a more general setting including hyperelastic \cite{RLR18} and piezoelectric \cite{RSRL23} materials undergoing micro-cracking and damage evolution, as well as anisotropic materials characterized by asymmetric behavior in tension and compression \cite{SRL25,LR22}.
This formal method proves particularly valuable when strong nonlinearities prevent a fully rigorous analytical treatment, still providing meaningful insight into the asymptotic behavior of the overall system.

A different approach to the study of bulk-to-surface limits in damage 
is based on variational techniques and,  in particular, on \emph{$\Gamma$-convergence},  suitably adapted to evolutionary problems. These tools were pioneered, in the context of rate-independent (or \emph{quasistatic}) evolution in \cite{Giac05ATAQ}, where the limit passage from bulk to surface
damage (in particular, brittle fracture) was addressed via the Ambrosio-Tortorelli approximation of the Mumford-Shah functional. Let us mention that this kind of  asymptotic analysis has been recently extended to (rate-independent) \emph{cohesive} fracture in \cite{BoCoIUR}. 
\par
The  seminal paper \cite{MRS06} settled the machinery for systematically applying $\Gamma$-convergence techniques in the context of asymptotic analyses for rate-independent systems (and beyond, see \cite{Mielke-evol}). Ever since,  the tools  for \emph{Evolutionary $\Gamma$-convergence} have been widely applied; in the context of delamination problems, we recall \cite{RoScZa09QDP}, dealing with the limit passage from adhesive to brittle delamination, as well as the dimension reduction analyses  in \cite{FreParRouZan11,FreRouZan13},  where models for both adhesive contact and brittle delamination in 2D plates have been  derived as limits of delamination between 3D thin plates (in the recent \cite{BDR26}, some of those results have been extended to the case where the rate-independent delamination flow rule is coupled with a rate-dependent momentum  balance). 

While the dimension reduction analysis in  \cite{FreParRouZan11,FreRouZan13,BDR26} only concerns delamination processes, both on the level of the fixed and on that of the vanishing thickness problem, the problem tackled in this paper has a different flavour. 
Our investigation
indeed proceeds from \cite{MiRoTh10DDNE}, where a dimension reduction analysis  leading  from bulk damage to surface delamination has been rigorously carried out, again  in the case where both processes are treated as \emph{rate independent}. 
Because of that feature, low temporal regularity is expected for the internal variable describing the inelastic phenomenon (i.e., the damage and the delamination parameters, respectively) and, thus, the related PDE systems need to be weakly formulated. In \cite{MiRoTh10DDNE},  the concept of 
\emph{Energetic solution}
\cite{MielkeRoubicek15} was adopted, and, correspondingly, the technique of \emph{Evolutionary $\Gamma$-convergence}  was leveraged for the asymptotic analysis. The main results in \cite{MiRoTh10DDNE} state that 
 Energetic solutions to the system for isotropic damage converge to an Energetic solution for a brittle delamination model
 as the thickness of the thin layer between the two bulk bodies, subject to damage process, tends to zero.

\par
  In this paper we aim to extend the analysis carried out  \cite{MiRoTh10DDNE}
to a specific regime therein not considered.

\subsection{Our result}
We place our investigation in the very same setup considered in \cite{MiRoTh10DDNE}.  Thus, we  consider
 a stratified structure consisting of two elastic adherents bonded by a thin adhesive layer; we suppose that only in the thin layer (rate-independent) damage occurs, as shown in Figure \ref{fig1}.

\begin{figure}
\begin{tikzpicture}[
    scale=0.95,
    line join=round,
    line cap=round,
    >=Latex,
    every node/.style={font=\small}
]


\def\W{6.0}      
\def\H{2.0}      
\def\dx{0.8}     
\def\dy{1.4}     



\newcommand{\DrawCuboid}[8]{%

\begin{scope}[shift={({#1},{#2})}]


\coordinate (A) at (0,0);
\coordinate (B) at (\W,0);
\coordinate (C) at (\W,\H);
\coordinate (D) at (0,\H);

\coordinate (A2) at (\dx,\dy);
\coordinate (B2) at ($(B)+(\dx,\dy)$);
\coordinate (C2) at ($(C)+(\dx,\dy)$);
\coordinate (D2) at ($(D)+(\dx,\dy)$);


\draw[thick]
(A)--(B)--(C)--(D)--cycle;

\draw[thick]
(D)--(D2)--(C2)--(C);

\draw[thick]
(B)--(B2)--(C2);

\draw[dashed]
(A)--(A2)--(D2);

\draw[dashed]
(A2)--($(A2)+(4.8,0)$);


\pgfmathsetmacro{\xL}{#3-#4/2}
\pgfmathsetmacro{\xR}{#3+#4/2}

\coordinate (E1) at (\xL,0);
\coordinate (E2) at (\xR,0);

\coordinate (F1) at (\xL,\H);
\coordinate (F2) at (\xR,\H);

\coordinate (E1b) at ($(E1)+(\dx,\dy)$);
\coordinate (E2b) at ($(E2)+(\dx,\dy)$);

\coordinate (F1b) at ($(F1)+(\dx,\dy)$);
\coordinate (F2b) at ($(F2)+(\dx,\dy)$);

\fill[gray!45]
(E1)--(E2)--(F2)--(F1)--cycle;

\fill[gray!25]
(F1)--(F2)--(F2b)--(F1b)--cycle;

\fill[gray!55]
(E2)--(F2)--(F2b)--(E2b)--cycle;

\draw
(E1)--(F1)
(E2)--(F2)
(F1)--(F1b)
(F2)--(F2b)
(E2)--(E2b);

\draw[dashed]
(E1)--(E1b)
(E1b)--(F1b);


\node at (1.9,2.55) {$#5$};
\node at (4.8,2.55) {$#6$};

\node at (#3,2.45) {$#7$};

\node[fill=white,inner sep=1pt]
at (0.45,1.72)
{$\Gamma_{\mathrm{Dir}}$};

\node[fill=white,inner sep=1pt]
at (6.25,1.72)
{$\Gamma_{\mathrm{Dir}}$};

\node[font=\Large]
at (-0.7,3.15)
{#8};

\end{scope}
}


\DrawCuboid
{0}{4.2}
{3.0}{0.5}
{\Omega_{-,\eps}}
{\Omega_{+,\eps}}
{\Omega_{D,\eps}}






\draw[<->] (0,3.85)--(2.75,3.85);
\node at (1.4,3.55) {$L-\eps$};

\draw[<->] (2.75,3.85)--(3.25,3.85);
\node at (3.0,3.55) {$2\eps$};

\draw[<->] (3.25,3.85)--(6.0,3.85);
\node at (4.7,3.55) {$L-\eps$};

\foreach \x in {0,2.75,3.25,6}
{
    \draw (\x,4.05)--(\x,3.7);
}



\begin{scope}[shift={(12.6,5)}]

\draw[->,thick] (0,0)--(1.0,0)
node[right] {$x_1$};

\draw[->,thick] (0,0)--(0,1.3)
node[above] {$x_2$};

\draw[->,thick] (0,0)--(-0.45,-0.8)
node[left] {$x_3$};

\end{scope}

\end{tikzpicture}
\caption{The set $\Omega$. The two adherents $\Omega_{-,\eps}$ and $\Omega_{+,\eps}$ are joined by an adhesive layer $\ODeps$ undergoing rate-independent damage. 
 The tikz code for this picture was created with the help   of AI. }
 \label{fig1}
\end{figure}

\par
In the reference configuration $\Omega = \Omega_{-,\eps}{\cup} \ODeps {\cup} \Omega_{+,\eps} $, where the layer $ \ODeps $ has \emph{fixed} thickness $\eps>0$,
we consider the rate-independent evolution of a damage process but nonetheless 
 exclude complete degradation of the material.  The mechanical behaviour of the system is governed by the elastic equilibrium equation for the displacement $\uu : \Omega{\times}(0,T) \to \R^d$, with the stress tensor depending on the \emph{full} displacement gradient $\nabla \uu$. Following the approach by \textsc{M.\ Fr\'emond}  \cite{FreNed96DGDP,Fre02},  damage (which only occurs in $ \ODeps $)  is modeled by means of an internal variable 
 $z:\ODeps\times (0,T)\to [0,1]$, such that $z=0$ corresponds to complete damage, $z=1$ to a fully intact material, and 
intermediate values $0<z<1$ describe partial damage. We postpone to Section \ref{s:2} the precise formulation of the PDE system for bulk damage 
and its weak formulation in terms of the Energetic concept: here, we will only comment on the energetics underlying the model that is indeed at the core of its Energetic formulation. In fact, the evolution of damage results from the interplay between the mechanisms of energy conservation and energy  dissipation. The latter (note that only the internal variable dissipates energy) is  encoded by the 
$1$-homogeneous dissipation potential
\[
\calR = \calR(\partial_t z) =\begin{cases}
\int_{\ODeps} \kappa |\partial_t z| \dd z  & \text{if } \partial_t z \leq 0 \ \aein \, \ODeps,
\\
\infty  & \text{else}
\end{cases} \qquad\text{with }\kappa>0
\]
which, in particular, enforces unidirectional evolution of damage, as $t\mapsto z(t,\cdot)$ is non-increasing. 
The time-dependent energy functional driving the process
consists of two contributions
\[
\calE = \calE(t,u,z) = \mathcal{Q}_{\mathrm{el}}(t,\uu,z) +\mathcal{F}(z)\,.
\]
We postpone to Section \ref{s:2}
 the precise definition of the surface energy $\mathcal{F}$ and instead focus on 
 the elastic energy functional 
 \[
 \mathcal{Q}_{\mathrm{el}} (t,\uu, z) = \mathcal{W}^-(t,\uu)+\mathcal{W}_{\!\scriptscriptstyle{\rm D}}(\uu, z) +\mathcal{W}^+(t,\uu), 
 \]
 where the terms  $\mathcal{W}^{\pm}$ encode the elastic energy (and the time-dependent force) in the domains $\Omega_{\pm,\eta}$, while the elastic energy in  the damageable layer $\ODeps$ is given by 
 \[
 \mathcal{W}_{\!\scriptscriptstyle{\rm D}}(\uu, z)  = \int_{\ODeps}  \tfrac{\mu}2 \EEE \gr(z) |\nabla \uu|^2 \dd x \,,
 \]
  with $\mu>0$ a fixed positive constant and $\gr $ a continuous function. \EEE
Complete damage is ruled out in that we assume that the material preserved its elastic properties regardless of $z$ approaching zero, 
i.e., we ask for the lower bound 
\begin{equation}
\label{lower-bound-intro}
\gr(z) \geq \eps\,.
\end{equation}
Notably, the lower bound on the damage variable is chosen to degenerate in the limit, thereby  giving rise to a limit model in which complete damage is achieved.
\par
Complete damage was also excluded in \cite{MiRoTh10DDNE} by 
 imposing a lower bound on the damage variable, specifically $z \in [\eps^\gamma, 1]$ for a parameter $\gamma \in (0,\infty){\setminus}\{1\}$.   Instead, \eqref{lower-bound-intro}
 corresponds to constraining  the values of $z $ to  the interval $[\eps,1]$.  \EEE 
This assumption, combined with a specific choice for the elastic energy density, places our work in a setting not explored in \cite{MiRoTh10DDNE}, as discussed in further detail in Remark \ref{on-greps} ahead. 
\par
 In \cite{MiRoTh10DDNE} the authors fully identified the regimes in which $z$ was taking values in $[\eps^{\gamma},1]$, showing, for $\gamma<1$ a complete delamination excluding displacement jumps, and obtaining, for $\gamma>1$ a brittle constraint but no explicit jump terms in the limiting energetics of the model.  The main result of this paper  is the analysis of the  critical \EEE regime $\gamma=1$.  Namely, in \underline{\textbf{Theorem  \ref{thm:main}}} ahead we will show that,
 Energetic solutions to the rate-independent system for bulk damage converge, as $\eps \down 0$, to Energetic solution of a delamination system driven by an energy functional that,
  in this critical regime $\gamma=1$,
 \begin{itemize}
 \item[-]
 on the one hand features the brittle constraint
 \[
 \res{z} \JUMP{\uu} =0 \text{ on }\GC
 \]
 (with $\res{z}$ a suitable rescaling of the delamination variable for the limit system),  and 
 thus allows for fractures in the interface $\GC$, 
 \item[-] 
 but on the other hand still also penalizes displacement jumps across the crack via the \emph{adhesive-type} energy contribution 
 \[
 \int_{\GC}   \tfrac{\mu}2    |\JUMP{\uu}|^2\,\mathrm{d}x\,.
 \]
 \end{itemize}
 \par 
 Because of this concurrence of brittle- \emph{and} adhesive-type features, we have opted for calling this limit model \emph{quasibrittle}. Indeed, 
according to \cite{BaLeSa22QFMS},
 quasibrittle fracture, also called `cohesive softening' fracture,
  occurs in intrinsically heterogeneous materials like concrete or mortars. They have a long and wide nonlinear 
   fracture process zone in front of the crack tip, that is not plastic but undergoes progressive softening damage in the form of randomly distributed microcracking, frictional micro-slips, and grain interlock. Moreover, \cite{BaLeSa22QFMS} highlights that, when the structure size is far larger than the maximum inhomogeneity size, every quasibrittle structure becomes perfectly brittle. Conversely, when
    a perfectly brittle homogeneous structure becomes sufficiently small,  as occurs in micro- or nano-scale devices, it transits to a quasibrittle fracture. Due to the occurring process zone, quasibrittle fractures are often modeled in engineering with the aid of cohesive zone models, containing an additional surface energy term that accounts for a traction-separation law featuring the crack-opening, i.e., the displacement jump $\JUMP{\uu}$ across the crack lips, cf.\ e.g.\  \cite{OrtPan99FDIC}.
However, in mathematical literature, cohesive zone fracture models often contain a specific dissipation law related to locally tracking the maximal crack opening, cf.\ e.g.\  \cite{DMZan07QSCG,Cagnetti08,Cagnetti-Toader11, LarsenSla14, NegriScala17, CLO18, ThoZan17CZDV}.
To our knowledge, the co-existence of adhesive and brittle features
 that characterizes the limit model obtained in this paper has not yet been observed elsewhere. 
\par
 Finally, let us emphasize that  depending on the size of the parameter $\mu>0$,  displacement jumps across the crack will be possible or actually rather still be prevented in the model. Hence,  we reckon that, in the case $\gamma=1$, our limit model   indeed effectively describes the transition between the regime $\gamma<1,$ where  displacement jumps are not featured,  and the regime $\gamma>1,$  where displacement jumps are allowed. 
\par
 From the mathematical standpoint, let us mention that   \EEE the proof of Theorem  \ref{thm:main} follows the general strategy designed in  \cite{MRS06} 
for abstract rate-independent systems, in particular adapting the arguments from \cite{MiRoTh10DDNE} to the scaling $\gamma=1$, which leads to the emergence of the adhesive contribution. 

\noindent
The \underline{\bf plan of the paper} is as follows. In Section  \ref{s:2} we collect our assumptions on the reference configuration, introduce a weak solvability concept, i.e. Energetic solutions, and establish a related existence result, Theorem \ref{thm:Marita}. Section \ref{s:3} sets the stage for the dimensional reduction analysis and states the main result of the paper, Theorem  \ref{thm:main}. Sections \ref{s:4} and  \ref{s:5} are devoted to the proof of Theorem  \ref{thm:main},   which proceeds through several steps verifying the conditions that guarantee the Evolutionary $\Gamma$-convergence of the damage rate-independent system to the system describing delamination. \EEE


\EEE

%
\section{The damage system in the volume domain \EEE}
\label{s:2}
The forthcoming analysis is set in any dimension $d\geq 2$, although the physically reasonable cases are $d\in \{2,3\}$. \EEE 

Following 
\cite{MiRoTh10DDNE}, we consider 
a domain 
\[
\Omega = (-L,L)\times 
\GC, \qquad \text{with  } \GC=(-H,H)^{d-1}
\]
for given $H,\, L >0$, 
   which is the union of  three cuboid-type Lipschitz-domains
\[
\Omega = \Omega_{-,\eta} \cup \ODeta \cup \Omega_{+,\eta}
\]
with $\Omega_{-,\eta} = (-L,-\eta) \times \GC$,  $\ODeta = (-\eta,\eta)\times \GC$, $\Omega_{+,\eta} = (\eta,L) \times \GC$, 
and $\eta \in (0,L)$  a 
\emph{fixed} parameter.
We will denote by 
$\Gamma_{\mathrm{Dir}}$  the following portion of the outer boundary
\[
\Gamma_{\mathrm{Dir}} = \{{-}L\} {\times} \GC \cup \{L\} {\times} \GC \,.
\]
The domains $\Omega_{\pm, \eta}$ are occupied by a non-linearly elastic material that is damage-resistive, while $\ODeta$ is occupied by a 
  material undergoing  \emph{rate-independent} damage (see Figure \ref{fig1}). \\

The mechanical behavior of the system is governed by an elastic equilibrium equation for the displacement field 
$\uu:\Omega\times (0,T)\to\RR^d$, with the  stress tensor depending on the displacement gradient $\nabla \uu$. 
Damage is confined to the intermediate layer where,
following the approach in \cite{FreNed96DGDP,Fre02}, the momentum balance equation is coupled with a flow rule for the damage variable $z:\ODeta\times (0,T)\to [0,1]$. We recall that this scalar field characterizes the damage state of the material: $z=0$ corresponds to complete damage, $z=1$ to a fully intact material, and 
intermediate values $0<z<1$ describe partial damage.  
\par
More precisely, \EEE
for fixed  $\eta \in (0,L)$, we consider the following damage system: 
\begin{subequations}
\begin{itemize}
\item[$\boldsymbol{-}$] 
In $ \Omega_{\pm,\eta}$  
the stored energy density $\rmW: \R^{d{\times}d} \to \R$, cf.\ \eqref{Wgeneral} ahead, only depends  on $\nabla \uu$, which reflects the 
 fact that damage occurs only on $\ODeta$:
\label{unrescaled}
\begin{align}
\label{bulk-momentum}
\begin{cases}
-\mathrm{div}  (\rmD \rmW (\nabla \uu) )  =\effe \qquad \text{ in } \Omega_{-,\eta}  \ti (0,T) \,,
\\
-\mathrm{div}  (\rmD \rmW(\nabla \uu) )=\effe \qquad \text{ in } \Omega_{+,\eta}  \ti (0,T) \,,
\end{cases}
\end{align}
where $\effe$ is a time-dependent applied volume force. \EEE
\item[$\boldsymbol{-}$] 
 The elastic equilibrium equation on $\ODeta$ instead takes the form 
\begin{align}
\label{damage-momentum}
-\mathrm{div} ( \mu \EEE \gr(z)   \nabla \uu ) =  {\bf 0} \qquad \text{ in } \ODeta {\times} (0,T),
\end{align}
corresponding to a stored elastic energy density
depending on $z$, \EEE
through the continuous function $\gr$, 
 and on  $F = \nabla \uu$, with  quadratic growth w.r.t.\ $F$. 
\item[$\boldsymbol{-}$]  On the Dirichlet part  
   of the outer boundary we take  (for simplicity) zero Dirichlet boundary conditions
\begin{equation}
\label{DirBC}
\uu =0 \qquad \text{on } \Gamma_{\mathrm{Dir}} {\times} (0,T)\,.
\end{equation}
\item[$\boldsymbol{-}$] 
The elastic equilibrium equation \eqref{damage-momentum} is 
 coupled with the (rate-independent) flow rule for the damage parameter:
\begin{align}
\label{flow-rule}
\partial \mathrm{R}(z_t) + \As z +Az + \partial\upphi(z) \ni -   \frac\mu 2  \EEE  \gr'(z)     \nabla \uu {\colon} \nabla \uu   \EEE      
 \qquad \text{ in } \ODeta \times (0,T),
\end{align}
supplemented with no-flux b.c.\ for $z$ on $\partial  \ODeta \times (0,T)$. 
Here,
and henceforth, we write $z_t$ in place of $\partial_t z$. \EEE
The density of the dissipation potential  $\mathrm{R} : \R \to [0,\infty]$ is given by
\begin{equation}
\label{dissRdens}
\mathrm{R}(v): = \begin{cases} \kappa |v| & \text{if } v \leq 0,
\\
\infty & \text{otherwise},
\end{cases}
\end{equation}
with $\kappa$ a fixed positive constant.  Moreover, the function $\upphi$,  with $\mathrm{dom}(\upphi)\subset [0,1]$,   renders the physical constraint on the variable $z$ and takes into account possible convex contributions in the potential energy (cf.\ \eqref{upphi} and \eqref{potential-F} ahead). \EEE    While  
 $A: \mathrm{H}^1(\ODeta)\to \mathrm{H}^1(\ODeta)^* $  is the Laplacian operator with zero Neumann boundary conditions, 
$\As : \Hs(\ODeta)\to \Hs(\ODeta)^* $ is the corresponding $q$-Laplacian operator, with $q>d$.
\end{itemize}
\end{subequations}
  We emphasize that the flow rule \eqref{flow-rule} is only formally written: for system \eqref{unrescaled}
we shall indeed address the well-known weak solvability concept of \emph{Energetic  solutions}, and carry out our asymptotic analysis within that framework. 

\begin{remark}
\upshape
We note that, in principle,  the  damage model could involve an elastic contribution of the form  
$$\int_{\ODeta}  \tfrac12 \gr(z)  \mathbb{C}  \nabla \uu {:} \nabla \uu \dd x\,,$$ where   $\mathbb{C} \in \R^{d{\times} d}$   is a general elasticity tensor. 
For the sake of the analysis, however, we shall restrict to the case 
in which  $\mathbb{C} $ coincides with  $\mu \mathbb{I}$, $ \mathbb{I}$ being \EEE the identity tensor.
\par
 In turn, our analysis could be extended in other directions. 
Let us also mention that some generalizations of our framework are possible. First, we could treat time-dependent Dirichlet loads, which we will omit here to avoid overburdening the exposition. Second, following \cite{MiRoTh10DDNE}, 
we could consider an asymmetric material response to the damage in compression and tension, which would ultimately enforce the non-interpenetration condition for the limiting model.  We prefer, however, to neglect this refinement in order to focus on the main mathematical novelty of the paper.
\end{remark} \EEE



\subsection{Energetics}
\ First of all, let us specify our standing assumptions on the constitutive functions of the damage system and on the time-dependent loadings. 
Throughout the paper, we shall suppose that 
\begin{enumerate}
\item   The elastic energy density 
$\rmW: \R^{d{\times}d} \to \R$ is convex and with quadratic growth
\begin{equation}
\label{Wgeneral} 
\exists\, c_{\rmW}, \tilde{c}_{\rmW}, C_{\rmW}>0  \ \forall\, F \in \R^{d{\times}d} \, : \qquad 
c_{\rmW} |F|^2 - C_{\rmW} \leq W(F) \leq \tilde{c}_{\rmW} |F|^2 + C_{\rmW}\,,
\end{equation}
while the energy density in the damageable domain,  $\WD: \R{\times} \R^{d{\times}d} \to \R$, is  given by 
\begin{equation}
 \WD(z,F)=   \tfrac\mu 2 \EEE  \gr(z)  F{\colon}F \qquad \text{with } \gr \in \rmC^0(\R) \text{ such that } \exists\, c_{\gr}>0 \ \forall\, z \in \R \, : \ \gr(z) \geq  c_{\gr}\,.
\end{equation}

\EEE

\item
The function $\upphi: \R \to \R$ is
\begin{equation} 
\label{upphi}
\text{lower semicontinuous, convex, with proper domain } \mathrm{dom}(\upphi) \subset [0,1].
\end{equation}
\item The volume  force $\effe$ satisfies
 \begin{equation}
\label{force0}
\effe \in \mathrm{C}^1([0,T];\mathrm{H}^1(\Omega;\R^d)^*) \qquad \text{with } \ \mathrm{supp} (\effe(t)) \cap   \overline{\Omega}_{\!\scriptscriptstyle{\rm D}, \eta} = \emptyset   \text{ in the sense of distributions for all } t \in [0,T].
 \end{equation}
 We highlight that, in \cite{MiRoTh10DDNE}   the analogue of the second condition was required for the time-dependent Dirichlet loading. \EEE
\end{enumerate}
  For later use, we remark that, as consequence of the support condition in \eqref{force} we have 
 \begin{equation}
 \label{partial-control-eta}
  \langle \effe(t), \uu \rangle_{\mathrm{H}^1(\Omega;\R^d)} \leq \|\effe\|_{\mathrm{L}^\infty(0,T;\mathrm{H}^1(\Omega;\R^d)^*) } \| \uu \|_{\mathrm{H}^1(\Omega_{-,\eta}{\cup}\Omega_{+,\eta};\R^d)}\,.
 \end{equation}


 The elastic energy  in the non-damageable domains $\Omega_{\pm,\eta}$ is encoded by \EEE   $\mathcal{W}^{\pm }: [0,T]\ti \mathrm{H}^1(\Omega_{\pm,\eta} ;\R^d) \to  \R \EEE $
\[
\mathcal{W}^+ (t,\uu) =  \int_{\Omega_{+,\eta}  }  \rmW(\nabla \uu\EEE) \dd x  - \langle \effe(t), \uu \rangle_{\mathrm{H}^1(\Omega_{+,\eta};\R^d) )} , \quad \mathcal{W}^- (t,\uu) =  \int_{\Omega_{-,\eta}  }  \rmW( \nabla \uu \EEE) \dd x  - \langle \effe(t), \uu \rangle_{\mathrm{H}^1(\Omega_{-,\eta};\R^d) )} 
\]
as well as $\mathcal{W}_{\!\scriptscriptstyle{\rm D}} : \mathrm{H}^1(\ODeta) \ti   \mathrm{L}^\infty (\ODeta) \to [0,+\infty)$
\begin{equation}
\label{WD}
\mathcal{W}_{\!\scriptscriptstyle{\rm D}}(\uu, z) : =  \int_{\ODeta}  \WD(z,\nabla \uu )  \dd x\,.
\end{equation}
Hence, the elastic 
energy 
functional $\mathcal{Q}_{\mathrm{el}} : [0,T]\ti \mathrm{H}^1(\Omega;\R^d) \ti \mathrm{L}^\infty (
\ODeta) \to \R$ is given by 
\begin{equation}
\label{energy-eps}
\mathcal{Q}_{\mathrm{el}}(t,\uu,z) : =\mathcal{W}^+ (t,\uu) + \mathcal{W}^- (t,\uu) + \mathcal{W}_{\!\scriptscriptstyle{\rm D}}(\uu, z)  \,.
\end{equation}
 The damage flow rule \eqref{flow-rule} is a balance law between the frictional forces in $\partial \mathrm{R}(z_t) $ and the potential restoring forces associated with $z \mapsto \mathcal{W}_{\!\scriptscriptstyle{\rm D}}(z,\uu)$ and with the potential energy $\mathcal{F}:  \Hs(\ODeta)  \to \R{\cup}\{\infty\} $
\begin{equation}
\label{potential-F}
\mathcal{F}(z):= \begin{cases}   \int_{\ODeta}\left( \frac1q \EEE |\nabla z|^q{+}\frac12 |\nabla z|^2{+} \upphi(z) \right) \dd x  & \text{if }  \upphi(z)  \in \mathrm{L}^1(\ODeta),
\\
\infty & \text{otherwise.}
\end{cases}
\end{equation}
\subsection{Energetic solutions}
In order to introduce in a compact form our weak solvability concept for system \eqref{unrescaled}, with fixed $\eta>0$, we introduce 
\begin{itemize}
\item[-]  the state space 
\[
\boldsymbol{X}:= 
\mathrm{H}^1(\Omega;\R^d) \ti \Hs (
\ODeta);
\]
\item[-]  the \emph{driving energy} functional for the damage system 
 \[
 \calE : [0,T]\ti \boldsymbol{X} \to \R, \qquad 
 \calE(t,u,z) : = \mathcal{Q}_{\mathrm{el}}(t,\uu,z) +\mathcal{F}(z)\,.
 \]
 \item[-] 
 the $1$-homogeneous dissipation potential
\begin{equation}
\label{calR-potential}
 \mathcal{R}: \mathrm{L}^1(\ODeta) \to [0,\infty], \qquad 
\mathcal{R} (v) = \int_{\ODeta} \mathrm{R}(v(x)) \dd x \,,
\end{equation}
originating from  the dissipation density $\mathrm{R}$ in \eqref{dissRdens}.
\end{itemize}
We will refer to the triple $(\boldsymbol{X}, \calE,\calR) $ as a \emph{rate-independent system for damage}. 

Observe that $\calR$ induces the total variation functional
\[
\mathrm{Var}_{\calR}(\zeta;[a,b]): = \sup \left\{ \sum_{i=0}^N \calR(\zeta(\tau_i){-} \zeta(\tau_{i-1})) \, : \ (\tau_i)_{i=0}^N \in \mathscr{P}([a,b]) \right\}, 
\]
for a given function $\zeta: [a,b]\to \mathrm{L}^1(\ODeta)$, with $ \mathscr{P}([a,b])$ the set of partitions of the interval $[a,b]$. 
\par
We are now in a position  to specify our weak solvability concept for \eqref{unrescaled}.
\begin{definition}
Let $\eta>0$ be fixed. We call a pair $(\uu,z)$, with 
\begin{equation}
\label{reg-uz}
\uu\in \mathrm{L}^\infty (0,T; \mathrm{H}^1(\Omega;\R^d)), \qquad z \in  \mathrm{L}^\infty (0,T; \Hs(\ODeta)){\cap} \BV ([0,T]; \mathrm{L}^1(\ODeta))
\end{equation}
an \emph{Energetic solution} to the rate-independent  system for damage  $(\boldsymbol{X}, \calE,\calR) $   if it complies with 
\begin{subequations}
\label{enform-dam-fixed}
\begin{itemize}
\item[--] the  \emph{global stability condition}
\begin{equation}
\label{glSTAB}
\calE(t,\uu(t), z(t)) \leq \calE(t,\hat{\uu}, \hat{z})  + \calR(\hat{z}{-} z(t)) \qquad \text{for all } (\hat{\uu},   \hat{z})  \in  \boldsymbol{X};
\end{equation}
\item[--] the  \emph{global energy balance}
\begin{equation}
\label{glEnBal}
\calE(t,\uu(t), z(t))+\mathrm{Var}_{\calR}(z;[s,t])  = \calE(t,\uu(s), z(s))+ \int_s^t \partial_t \calE(r,\uu(r), z(r)) \dd r  \qquad \text{for all } [s,t]\subset [0,T]\,.
\end{equation}
\end{itemize}
\end{subequations}
\end{definition}
For later use, it is useful to introduce the \emph{stable set} 
\[
\boldsymbol{S}:= \bigcup_{t\in [0,T]}\mathrm{S}(t), \qquad \mathrm{S}(t):= \{ (\uu,z)\in \boldsymbol{X}\, : \ \calE(t,\uu, z) \leq \calE(t,\hat{\uu}, \hat{z})  + \calR(\hat{z}{-} z) \  \forall\, (\hat{\uu},   \hat{z})  \in  \boldsymbol{X}\}\,.
\]
\begin{remark}[Reformulation of the energy balance]
\sl
For a given  Energetic solution we have $\mathrm{Var}_{\calR}(z;[0,T]) $; since $\calR$ controls $\| \cdot\|_{\mathrm{L}^1(\ODeta)}$, 
this yields $z 
\in \BV ([0,T]; \mathrm{L}^1(\ODeta))$.  Taking into account the unidirectionality constraint encompassed in the definition of $\calR$, we then obtain that 
\[
\forall\, 0\leq s \leq t \leq T \,: \ z(t,x) \leq z(s,x) \qquad  \foraa\, x \in \ODeta\,.
\]
Therefore, it is not difficult to check that  $\mathrm{Var}_{\calR}(z;[s,t])  = \kappa \int_{\ODeta} (z(s,x){-}z(t,x)) \dd x $. All in all, \eqref{glEnBal} explicitly rewrites as
\begin{equation}
\label{glob-en-bal-expl}
\mathcal{Q}_{\mathrm{el}}(t,\uu(t),z(t)) + \mathcal{F}(z(t)) + \kappa  \int_{\ODeta} (z(s,x){-}z(t,x)) \dd x= \mathcal{Q}_{\mathrm{el}}(s,\uu(s),z(s)) + \mathcal{F}(z(s)) 
-\int_s^t  \langle \effe'(r), \uu(r) \rangle_{\mathrm{H}^1(\Omega;\R^d)} \dd r 
\end{equation}
for all $[s,t]\subset [0,T]$. 
\end{remark}
\par 
The following result states the existence of Energetic solutions to the rate-independent system for bulk damage, and was proven in \cite[Thm.\ 3.2.7]{MT:Diss}. 
\begin{theorem}
\label{thm:Marita}
Let $\eta>0$ be fixed. For any 
\begin{subequations}
\label{init-conditions}
\begin{equation}
\label{init-data}
(\uu_0, z_0) \in \mathrm{H}^1(\Omega;\R^d) \ti  \Hs(\ODeta)
\end{equation}
fulfilling the stability condition at time $t=0$
\begin{equation}
\label{stab-cond-init}
(\uu_0, z_0) \in \mathrm{S}(0)
\end{equation}
\end{subequations}
there exists an Energetic solution $(\uu,z)$ to the rate-independent system $(\boldsymbol{X}, \calE,\calR) $,   fulfilling  the initial condition $(\uu(0),z(0))=(\uu_0,z_0)$.
\end{theorem}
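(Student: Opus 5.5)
We would prove Theorem \ref{thm:Marita} by the standard scheme for rate-independent systems \cite{MRS06,MielkeRoubicek15}: time-incremental minimization, a priori estimates, a Helly-type selection argument, and recovery of stability and of the energy balance in the limit.

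\textbf{Discretization.} Fix partitions $0=t_0<t_1<\dots<t_N=T$ with fineness $\tau\to0$. Starting from $(\uu_0,z_0)$, we define $(\uu^k,z^k)$ recursively as a minimizer of
\[
(\uu,z)\mapsto \calE(t_k,\uu,z)+\calR(z-z^{k-1})\quad\text{on } \boldsymbol{X}.
\]
Minimizers exist by the direct method. For coercivity, the bound \eqref{Wgeneral} together with Korn/Poincaré (using $\uu=0$ on $\GDir$) and $\gr\geq c_{\gr}$ controls $\|\uu\|_{\rmH^1(\Omega)}$. The force term is controlled by \eqref{partial-control-eta}. The gradient terms in $\calF$, together with $\mathrm{dom}(\upphi)\subset[0,1]$, control $\|z\|_{\Hs}$. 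For lower semicontinuity, $\Hs$ embeds compactly into $\rmC^0(\overline{\ODeta})$ because $q>d$. Hence $z_n\to z$ uniformly along minimizing sequences, and so $\gr(z_n)\to\gr(z)$ uniformly. Then $\int\gr(z_n)|\nabla\uu_n|^2$ is weakly lsc: write it as $\int\gr(z)|\nabla\uu_n|^2$ plus a remainder that vanishes. Convexity of $\rmW$ and $\upphi$ handles the remaining terms, and $\calR$ is lsc for strong $\rmL^1$ convergence.

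\textbf{Discrete estimates.} Comparing the minimizer with the competitor $(\uu^{k-1},z^{k-1})$ gives the upper energy estimate
\[
\calE(t_k,\uu^k,z^k)+\calR(z^k-z^{k-1})\le\calE(t_{k-1},\uu^{k-1},z^{k-1})+\int_{t_{k-1}}^{t_k}\partial_t\calE(r,\uu^{k-1},z^{k-1})\dd r.
\]
The power $\partial_t\calE=-\langle\effe',\uu\rangle$ is bounded by $C(1+\calE)$ thanks to \eqref{force0}. A discrete Gronwall argument then gives uniform bounds on the energy, on $\sup_t\|\uu\|_{\rmH^1}$, on $\sup_t\|z\|_{\Hs}$, and on $\mathrm{Var}_\calR$. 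Each discrete pair is stable by the triangle inequality for $\calR$. Moreover, $z$ is monotone in time.

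\textbf{Limit passage.} Helly's theorem applies to the nonincreasing, bounded functions $z_\tau(t)$. It yields $z(t)$ with $z_\tau(t)\to z(t)$ in $\rmL^1$ for all $t$, and by the uniform $\Hs$ bound also weakly in $\Hs$ and uniformly. For fixed $t$, we pick a subsequence with $\uu_\tau(t)\weakto \uu(t)$ in $\rmH^1$. We then show that $\uu(t)$ minimizes $\calE(t,\cdot,z(t))$.

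\emph{Closedness of the stable sets} is the step we expect to be the main obstacle. We construct mutual recovery sequences: given $(\hat\uu,\hat z)$, take $\hat\uu_\tau=\hat\uu$ and
\[
\hat z_\tau=\min\{\hat z,\,z_\tau\}+\big(\hat z-\min\{\hat z,z\}\big)^{\!*},
\]
or, more simply, the classical choice $\hat z_\tau=\min\{\hat z, z_\tau + (\hat z-z)^+\}$ adapted as in \cite{MT:Diss}. It must satisfy $\hat z_\tau\le z_\tau$, $\hat z_\tau\to\hat z$ strongly in $\Hs$, and $\calR(\hat z_\tau-z_\tau)\to\calR(\hat z-z)$. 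If strong $\Hs$ convergence fails for this truncation (the $q$-Laplacian term is delicate under $\min$), we would first prove improved convergence $z_\tau(t)\to z(t)$ strongly in $\Hs$. We would obtain it by testing stability with $\hat z=z(t)$ and using the uniform convexity of $\frac1q|\nabla z|^q$; this gives $\limsup\calF(z_\tau)\le\calF(z)$. The uniform convergence $\gr(z_\tau)\to\gr(z)$ makes the elastic term in the damaged layer continuous along this sequence. With this, the limsup inequality for stability follows, so $(\uu(t),z(t))\in\mathrm S(t)$.

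Finally, stability plus lower semicontinuity give the energy inequality "$\le$" from the discrete upper estimate. Passing to the limit in the power term requires strong convergence $\partial_t\calE(t,\uu_\tau,z_\tau)\to\partial_t\calE(t,\uu,z)$. For this we use energy-minimizer uniqueness in $\uu$ for given $z$ only where available; otherwise we use the standard $\limsup$ of powers argument with Fatou, as in \cite{MRS06}, to pick a measurable selection. The converse inequality "$\ge$" follows from stability by the usual Riemann-sum argument along partitions. Together these give \eqref{glEnBal}, with $(\uu(0),z(0))=(\uu_0,z_0)$.
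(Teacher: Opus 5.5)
Your overall scheme is the standard one behind \cite[Thm.\ 3.2.7]{MT:Diss}, which the paper cites instead of proving: incremental minimization, discrete upper energy estimate and discrete stability, Helly selection, closedness of the stable sets, and the two-sided energy inequality. The incremental minimizers, the a priori bounds and the energy-balance part are fine. Note that $\partial_t\calE=-\langle\effe',\uu\rangle$ is weakly continuous in $\uu$, so the powers converge trivially along the $t$-dependent subsequences.

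The gap is in the step you flag yourself, the closedness of the stable sets. For $\hat z\le z(t)$, the only relevant case, your ``classical'' choice reduces to $\hat z_\tau=\min\{\hat z,z_\tau\}$. You justify it by requiring $\hat z_\tau\to\hat z$ strongly in $\Hs$, but this need not hold. On $\{z_\tau<\hat z\}$ one has $\nabla\hat z_\tau=\nabla z_\tau$, which only converges weakly. This set need not have vanishing measure, since it can fill part of $\{\hat z=z(t)\}$. Your fallback does not repair this as stated. You cannot test the stability of $(\uu_\tau(t),z_\tau(t))$ with $\hat z=z(t)$: the unidirectional $\calR$ only admits competitors $\le z_\tau(t)$, otherwise $\calR(z(t)-z_\tau(t))=+\infty$. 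There is no order relation between the Helly limit $z(t)$ and $z_\tau(t)$. Your first formula, with $(\cdot)^*$, is not defined.

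Either of the following supplies the missing ingredient.

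\emph{Lattice identity.} Keep $\hat\uu_\tau=\hat\uu$ and $\hat z_\tau=\min\{\hat z,z_\tau\}$. Since $\calF$ is the integral of a function of $(z,\nabla z)$ and $\nabla a=\nabla b$ a.e.\ on $\{a=b\}$, you have $\calF(\min\{a,b\})+\calF(\max\{a,b\})=\calF(a)+\calF(b)$. Hence $\calF(\hat z_\tau)-\calF(z_\tau)=\calF(\hat z)-\calF(\max\{\hat z,z_\tau\})$. Because $\max\{\hat z,z_\tau\}\weakto\max\{\hat z,z\}=z$ in $\Hs$, weak lower semicontinuity gives $\limsup(\calF(\hat z_\tau)-\calF(z_\tau))\le\calF(\hat z)-\calF(z)$, with no strong convergence needed. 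The elastic terms follow from the uniform convergence $\gr(\hat z_\tau)\to\gr(\hat z)$ and lower semicontinuity. Also $\calR(\hat z_\tau-z_\tau)=\kappa\int(z_\tau-\hat z)^+\to\calR(\hat z-z)$.

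\emph{Shift using $q>d$.} Alternatively, use a shift as in \eqref{mrs-old-z}. Set $\delta_\tau:=\|z_\tau-z\|_{\rmC^0}\to0$. Then $\max\{\hat z-\delta_\tau,0\}$ lies below $z_\tau$ and its gradient is pointwise bounded by $|\nabla\hat z|$, so the gradient part is trivial. For the $\upphi$-term you need continuity of $\upphi$ on its domain, e.g.\ $\mathrm{dom}(\upphi)=[0,1]$. With $\hat z=z(t)$, this is also the admissible competitor that makes your improved-convergence argument work.
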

\begin{remark}
\label{rmk:more-precise}
\sl 
We stress that \cite[Thm.\ 3.2.7]{MT:Diss} applies to a broader range of (isotropic) damage models, as it allows for elastic energy densities 
$\rmW$ and $\WD$
compatible with geometrically nonlinear models (at finite strains). 
While finite-strain elasticity is out of the reach of our techniques for  dimensional-reduction analysis, we could profit by the generality of  \cite[Thm.\ 3.2.7]{MT:Diss}  and 
 extend the present study to the case  in which the elastic energy density $\rmW$ in the \emph{undamageable} domains $\Omega_{\pm,\eta} $ has general growth $p \geq 2$
 (as it will be clear from the analysis in  Sections  \ref{s:5} and \ref{s:6}), \EEE
  instead  of simply being quadratic as  in \eqref{Wgeneral}. In the forthcoming analysis, we will stick to the quadratic case only  to avoid overburdening the exposition. \EEE
\end{remark}

\section{Our main result}
\label{s:3}
In this section, we introduce the framework for our asymptotic analysis  specifying the energetic setup along with the choice of rescaling factors for the various contributions to the driving energy functional. The damage variable will be rescaled via a classical change of variables, and the damage flow rule will be  formulated in an auxiliary domain of fixed thickness. The undamageable domains and the displacement variable will be  left unrescaled, following the approach of \cite{MiRoTh10DDNE}. By resorting to the theory of Evolutionary $\Gamma$-convergence, we will  establish here our main convergence result, Theorem~\ref{thm:main}. This theorem guarantees convergence to  Energetic solution to a rate-independent delamination system that incorporates both the brittle constraint and an adhesive-type energy contribution.

\EEE

\subsection{Setup for the dimension reduction}

 We will  now address the case in which the thickness of the damageable domain vanishes.\\ 
 
 \begin{figure}
 \begin{tikzpicture}[
    scale=0.95,
    line join=round,
    line cap=round,
    >=Latex,
    every node/.style={font=\small}
]


\def\W{6.0}      
\def\H{2.0}      
\def\dx{0.8}     
\def\dy{1.4}     



\newcommand{\DrawCuboid}[8]{%

\begin{scope}[shift={({#1},{#2})}]


\coordinate (A) at (0,0);
\coordinate (B) at (\W,0);
\coordinate (C) at (\W,\H);
\coordinate (D) at (0,\H);

\coordinate (A2) at (\dx,\dy);
\coordinate (B2) at ($(B)+(\dx,\dy)$);
\coordinate (C2) at ($(C)+(\dx,\dy)$);
\coordinate (D2) at ($(D)+(\dx,\dy)$);


\draw[thick]
(A)--(B)--(C)--(D)--cycle;

\draw[thick]
(D)--(D2)--(C2)--(C);

\draw[thick]
(B)--(B2)--(C2);

\draw[dashed]
(A)--(A2)--(D2);

\draw[dashed]
(A2)--($(A2)+(4.8,0)$);


\pgfmathsetmacro{\xL}{#3-#4/2}
\pgfmathsetmacro{\xR}{#3+#4/2}

\coordinate (E1) at (\xL,0);
\coordinate (E2) at (\xR,0);

\coordinate (F1) at (\xL,\H);
\coordinate (F2) at (\xR,\H);

\coordinate (E1b) at ($(E1)+(\dx,\dy)$);
\coordinate (E2b) at ($(E2)+(\dx,\dy)$);

\coordinate (F1b) at ($(F1)+(\dx,\dy)$);
\coordinate (F2b) at ($(F2)+(\dx,\dy)$);

\fill[gray!45]
(E1)--(E2)--(F2)--(F1)--cycle;

\fill[gray!25]
(F1)--(F2)--(F2b)--(F1b)--cycle;

\fill[gray!55]
(E2)--(F2)--(F2b)--(E2b)--cycle;

\draw
(E1)--(F1)
(E2)--(F2)
(F1)--(F1b)
(F2)--(F2b)
(E2)--(E2b);

\draw[dashed]
(E1)--(E1b)
(E1b)--(F1b);


\node at (1.9,2.55) {$#5$};
\node at (4.8,2.55) {$#6$};

\node at (#3,2.45) {$#7$};

\node[fill=white,inner sep=1pt]
at (0.45,1.72)
{$\Gamma_{\mathrm{Dir}}$};

\node[fill=white,inner sep=1pt]
at (6.25,1.72)
{$\Gamma_{\mathrm{Dir}}$};

\node[font=\Large]
at (-0.7,3.15)
{#8};

\end{scope}
}


\DrawCuboid
{0}{4.2}
{3.0}{0.5}
{\Omega^\varepsilon_{-}}
{\Omega^\varepsilon_{+}}
{\ODeps}
{a)}


\node at (2.5,5.95) {$\Gamma^\varepsilon_{-}$};
\node at (4.0,5.95) {$\Gamma^\varepsilon_{+}$};




\draw[<->] (0,3.85)--(2.75,3.85);
\node at (1.4,3.55) {$L-\varepsilon$};

\draw[<->] (2.75,3.85)--(3.25,3.85);
\node at (3.0,3.55) {$2\varepsilon$};

\draw[<->] (3.25,3.85)--(6.0,3.85);
\node at (4.7,3.55) {$L-\varepsilon$};

\foreach \x in {0,2.75,3.25,6}
{
    \draw (\x,4.05)--(\x,3.7);
}


\draw[
    thick,
    fill=gray!20
]
(7.0,5.9)
--(8.2,5.9)
--(8.2,6.2)
--(9.0,5.6)
--(8.2,5.0)
--(8.2,5.3)
--(7.0,5.3)
--cycle;

\node at (7.6,5.6) {$\varepsilon \to 0$};


\DrawCuboid
{9.8}{4.2}
{3.0}{0.02}
{\Omega_{-}}
{\Omega_{+}}
{}
{b)}

\node[fill=white,inner sep=1pt]
at (12.8,5.95)
{$\GC$};




\draw[<->] (9.8,3.85)--(12.8,3.85);
\node at (11.3,3.55) {$L$};

\draw[<->] (12.8,3.85)--(15.8,3.85);
\node at (14.3,3.55) {$L$};

\foreach \x in {9.8,12.8,15.8}
{
    \draw (\x,4.05)--(\x,3.7);
}


\draw[dash dot]
(2.75,4.2)--(7.5,2.1);

\draw[dash dot]
(3.25,4.2)--(8.0,2.1);

\draw[dash dot]
(12.8,4.2)--(8.0,2.1);


\DrawCuboid
{4.2}{0}
{3.0}{1.1}
{\Omega^\varepsilon_{-}}
{\Omega^\varepsilon_{+}}
{\ODeps}
{c)}

\node[fill=white,circle,inner sep=2pt]
at (7.2,1.05)
{$\Omega_D$};




\draw[<->] (4.2,-0.55)--(6.75,-0.55);
\node at (5.5,-0.85) {$L-\varepsilon$};

\draw[<->] (6.75,-0.55)--(7.85,-0.55);
\node at (7.3,-0.85) {$2$};

\draw[<->] (7.85,-0.55)--(10.2,-0.55);
\node at (9.0,-0.85) {$L-\varepsilon$};

\foreach \x in {4.2,6.75,7.85,10.2}
{
    \draw (\x,-0.35)--(\x,-0.7);
}


\begin{scope}[shift={(13.6,0.7)}]

\draw[->,thick] (0,0)--(1.0,0)
node[right] {$x_1$};

\draw[->,thick] (0,0)--(0,1.3)
node[above] {$x_2$};

\draw[->,thick] (0,0)--(-0.45,-0.8)
node[left] {$x_3$};

\end{scope}

\end{tikzpicture}
\caption{We highlight here the geometry of our domain. We assume (see  Fig.\ a)) that $\Omegaep$ and $\Omegaem$ are the reference configurations of two elastic adherents, bonded by a thin adhesive layer of thickness $\eps$. After a rescaling, we map (see Fig.\ b)) the intermediate layer to the $\eps$-independent configuration $\Omega_D$. As the thickness $\eps$ converges to zero, we recover the structure depicted in Fig.\ c), where the intermediate adhesive layer is replaced by the interface $\GC$.   The   tikz code for this picture was created with the help of AI. }
\label{fig2}
\end{figure}

 More precisely, (see Figure \ref{fig2}), we have 
 \begin{subequations}
 \label{epsilon-domains}
 \begin{align}
  \label{epsilon-domains-1}
  & 
 \Omega = \Omega_{-}^{\eps} 
\cup \OD^\eps \cup \Omega_{+}^{\eps}
\qquad \text{with}
\\
 &
  \label{epsilon-domains-2}
\begin{cases}
\Omega_{-}^\eps = (-L,-\eps) \times \GC,
\\
\OD^\eps = (-\eps,\eps)\times \GC,
\\
\Omega_{+}^\eps = (\eps,L) \times \GC,
\end{cases}
\text{  the interfaces } 
\Gamma_{\pm \eps} = \{ \pm \eps \} \times \GC\,, \qquad \text{and  } \eps \down 0.
\end{align}
\end{subequations}
\par
The Energetic formulation for  the damage system \eqref{unrescaled} in the domains 
$ \Omega_{\pm}^{\eps} $ and 
$ \OD^\eps$
 is  codified by
 \begin{itemize}
 \item[-] the state spaces $ \Xeps :=  \mathrm{H}^1(\Omega;\R^d) \ti  \Hs (\OD^\eps) $
 \item[-] 
the dissipation potentials
\begin{equation}
\label{dissReps}
 \mathcal{R}_\eps: \mathrm{L}^1(\OD^\eps) \to [0,\infty], \qquad 
\mathcal{R}_\eps (v) = \int_{\OD^\eps} \frac1\eps\mathrm{R}(v(x)) \dd x \,,
\end{equation}
 \item[-] 
 the energy functionals
$\calE_\eps : [0,T]     \ti \mathrm{H}^1(\Omega;\R^d) \ti  \Hs (\OD^\eps) \to \R$ 
\begin{subequations}
\label{Eeps}
\begin{equation}
\label{Eeps-1}
 \calE_\eps(t,\uu,z) : =\mathcal{Q}_{\mathrm{el}}^{\eps}(t,\uu, z)+ \calF^\eps(z),
 \end{equation}
where the 
elastic energy functionals $\mathcal{Q}_{\mathrm{el}}^\eps : [0,T]\ti \mathrm{H}^1(\Omega;\R^d) \ti  \mathrm{L}^\infty (\OD^\eps) \to \R$ are given  by
\begin{equation}
\label{Eeps-2}
\begin{gathered}
\mathcal{Q}_{\mathrm{el}}^\eps(t,\uu,z) : = \int_{\Omegaem {\cup} \Omegaep  }  \rmW(\nabla \uu) \dd x + \int_{\OD^\eps}  \WD(z,\nabla \uu) \dd x - \langle \effe(t), \uu \rangle_{\mathrm{H}^1(\Omega;\R^d)} 
\\
 \text{with} \quad  \begin{cases}
\rmW & \text{from \eqref{Wgeneral}},
\\
\WD(z,F)= \frac{ \mu\EEE}{2}\gr^\eps (z) \mathbb{I} F{:}F\,,  & \mu>0\EEE \text{ and } \gr^\eps (z): = z^2 +\eps\,,
\end{cases}
\end{gathered}
\end{equation}
whereas the potential energies $\calF^\eps: \Hs(\OD^\eps) \to \R{\cup}\{\infty\}$ 
\begin{equation}
\label{Eeps-3}
  \calF^\eps (z) =   \eps^\rho \int_{\OD^\eps}  
 \frac1{q\eps} \EEE |\nabla z|^q   \dd x  +\EEE  \int_{\OD^\eps}  
\left[ \frac1{2\eps}|\nabla z|^2{+} 
 \frac1\eps\upphi (z) \right]   \dd x\,
 \end{equation}
 feature suitable rescaling factors, with the exponent $\rho>0$ specified later  (see also Remark \ref{rk:rho}). \EEE
 Let us merely point out, here, that the rescaling factor 
$\tfrac1{\eps}$ in front of the two gradient terms reflects the fact that they are defined by an integration on $\OD^\eps$.   \end{subequations}
  \end{itemize}
 For the external loading $\effe$ we now assume that property \eqref{force0} is valid uniformly for all $\eps>0;$ more precisely we claim that
\begin{equation}
\label{force}
\effe \in \mathrm{C}^1([0,T];\mathrm{H}^1(\Omega;\R^d)^*) \; \text{ with } \ \mathrm{supp} (\effe(t)) \cap  \bigcup_{\eps>0} \overline{\Omega^\eps_{\!\scriptscriptstyle{\rm D}}} = \emptyset   \text{ in the sense of distributions for all } t \in [0,T].
\end{equation}
\EEE
  \par
 Theorem \ref{thm:Marita} guarantees that   for every $\eps>0$ there exists an Energetic solution $(\uu_\eps, z_\eps)$ to the rate-independent system $(\Xeps, \calE_\eps, \calR_\eps)$. 

\begin{remark}
\label{on-greps}
\slshape
 Although the choice for the coefficient function $\gr^\eps$ could be slightly extended, in our analysis \EEE 
it is crucial that 
\begin{equation}
\label{gamma-eps-coerc}
 \gr^\eps (z) \geq \eps \qquad \forall\, z \in [0,1].
\end{equation}
In fact, 
 the scaling regime considered in this paper  is precisely individuated by 
\eqref{gamma-eps-coerc}, which  would  correspond, in the setup of \cite{MiRoTh10DDNE},   to the constraint 
\begin{equation}
\label{constraint-Marita}
 z\in [\eps^\gamma, 1], \qquad\text{with  the choice $\gamma=1$.}
\end{equation}
This analogy resides in the fact that,   in \cite{MiRoTh10DDNE}  the elastic energy density $\WD$ for the damageable domains corresponds to  $\WD(z,\nabla \uu) = z|\nabla \uu|^p$
(in that, the elastic energy density from \cite{MiRoTh10DDNE}  indeed involves the linearized strain tensor $e(\uu)$ in place of $\nabla \uu$).
We emphasize that in \cite{MiRoTh10DDNE} 
the exponents $p>1$ and  $\gamma$ either fulfill $\gamma>p-1$, or $0<\gamma<p-1$, whereas the case
$\gamma=p-1$ is  not treated in  \cite{MiRoTh10DDNE}. Hence, for an elastic energy density with quadratic (i.e., $p=2$) growth, \eqref{constraint-Marita} (or, equivalently, \eqref{gamma-eps-coerc}) singles out the case unexplored in  \cite{MiRoTh10DDNE}, which we propose to analyze in what follows.
\end{remark} \EEE 

 \subsection{Rescaling for dimension reduction}
 \label{subs:rescaling}
 Along the footsteps of \cite{MiRoTh10DDNE}, first of all  we   will set the damage flow rule  in a \emph{fixed} domain 
 (see Fig.\ \ref{fig2}c). \EEE  To this end, we introduce
 \begin{itemize}
 \item[--] the rescaled damage domain
 $
 \OD= (-1,1) \ti \GC \quad \text{with } \res{x}= (\res{x}_1, x') \in \OD
 $
 and the short-hand  $x'= (x_2,\cdots, x_d)$; \EEE
 \item[--] the rescaling mapping 
 \begin{equation}
 \label{def-resc}
 \resmap: \OD \to \OD^\eps, \qquad  \resmap(x_1,x') = (\eps x_1, x')
 \end{equation}
 which induces 
 the operator
 \begin{equation}
 \label{resop-def}
 \resop: \mathrm{L}^1(\OD^\eps) \to \mathrm{L}^1(\OD) \qquad z\mapsto \res{z}: = z{\circ}\resop\,.
 \end{equation}
 \end{itemize}
 Using $\resop$, we may rescale the elastic energy $ \calQ_{\mathrm{el}}^\eps$ in such a way as to obtain a functional  over the \emph{fixed} function space $[0,T]\ti \mathrm{H}^1(\Omega;\R^d)\ti \mathrm{L}^\infty (\OD) $. Namely, we define 
  $\res{Q}_{\mathrm{el}}^\eps: [0,T]\ti \mathrm{H}^1(\Omega;\R^d)\ti \mathrm{L}^\infty (\OD) \to \R$ as
 \begin{equation}
 \label{eq:resQep}
 \res{Q}_{\mathrm{el}}^\eps(t,\uu,\res{z}) : = \calQ_{\mathrm{el}}^\eps(t,\uu,\resop^{-1}(\res{z})) =   \int_{\Omega_-^{\eps} {\cup} \Omega_+^{\eps} }  \rmW(\nabla \uu) \dd x + \int_{\OD^{\eps}}  \WD(\res{z} {\circ} \resmap^{-1},\nabla\uu) \dd x - \langle \effe(t), \uu \rangle_{\mathrm{H}^1(\Omega;\R^d)}\,.
\end{equation}
Let us emphasize that, here, along the footsteps of \cite{MiRoTh10DDNE}  we do not rescale the undamageable domains
$\Omega_{\pm}^\eps$ and, accordingly, do not rescale the displacement variable. 
\par
Instead, 
by our choice of the rescaling factor $\frac1\eps$ in \eqref{Eeps-3},  the different  contributions to  $\calF^\eps$
 transform in the following way:
 \begin{subequations}
 \label{damageable-rescaled}
 \begin{align}
 &
 \label{damageable-rescaled-1}
  \int_{\OD^\eps}  
\frac1\eps\upphi (z)  \dd x=  \int_{\OD}  
\upphi (\res{z})  \dd \res{x} \,,
\\
 &
 \label{damageable-rescaled-1/2}
  \int_{\OD^\eps}  
\frac1{2\eps} |\nabla z|^2  \dd x=   \frac12\res{A}_2^\eps(\res{z}) \qquad \text{with }  \res{A}_2^\eps(\res z) : = \int_{\OD} \left( \left| \tfrac1\eps \partial_{\res{x}_1} \res z\right|^2  {+} |
\partial_{x'} \res z|^2 \right) \dd \res{x}_1 \dd x'
\\
&
 \label{damageable-rescaled-2}
\begin{aligned}
&
\int_{\OD^\eps}  
 \frac1{q\eps}  \EEE |\nabla z|^q  \dd x=    \frac1q \EEE \res{A}_{q}^\eps(\res{z}) \qquad \text{with }  \res{A}_{q}^\eps(\res z) : = \int_{\OD} \left( \left| \tfrac1\eps \partial_{\res{x}_1} \res z\right|^2  {+} |
\partial_{x'} \res z|^2 \right)^{\frac{q}{2}} \dd \res{x}_1 \dd x'
  \end{aligned}
 \end{align}
 \end{subequations}
with the notation  $ \partial_{\res{x}_1}  $  for the partial derivative w.r.t.\ the first variable, and $\nabla_{x'}$ for the gradient w.r.t.\  $x'= (x_2,\cdots, x_d)$.
 In what follows, for simplicity we will 
simply write $\res{A}^\eps$ in place of $\res{A}_2^\eps$. 
 \EEE 

All in all,
 the rescaled driving energy $ \res{E}_\eps: [0,T]\ti \mathrm{H}^1(\Omega;\R^d) \ti \Hs(\OD) \to \R{\cup}\{\infty\}$ is given by 
\begin{equation}
\label{resE-eps}
 \res{E}_\eps(t,\uu, \res{z}) : =  \res{Q}_{\mathrm{el}}^\eps(t,\uu,\res{z})+\res{F}_\eps (\res{z}) 
 \qquad \text{with } \res{F}_\eps (\res{z})  = 
  \frac{\eps^\rho}q \EEE  \res{A}_{q}^\eps(\res{z})  +  \frac12\res{A}^\eps(\res{z}) 
+ \int_{\OD}  
\upphi (\res z)  \dd \res{x} \,.
\end{equation}
In the same way, using $\resop$ we define $ \res{R}_\eps: \mathrm{L}^1(\OD) \to [0,\infty]$
 \begin{equation}
 \label{resZ-eps}
 \res{R}_\eps(\res{v}) : = \calR_\eps(\resop^{-1}(\res{v})) =  \int_{\OD^\eps} \tfrac1\eps \mathrm{R}(\res{v}{\circ} \resmap^{-1}) \dd x =  \int_{\OD} 
 \mathrm{R}(\res{v}) \dd \res{x}_1 \dd x' 
  \,.
\end{equation}
In fact, $ \res{R}_\eps$ does not depend on $\eps>0$, and will thus be simply denoted as $\res R$. 
The `rescaled' damaged system is thus individuated by the triple
 \begin{equation}
 \label{triple-notation}
 (\resXeps,  \res{E}_\eps, \res{R}) \qquad \text{with } \resXeps :=  \mathrm{H}^1(\Omega;\R^d) \ti  \Hs (\OD).
 \end{equation}
 Recall that, 
for every $\eps>0$,
 Theorem \ref{thm:Marita} guarantees that  there exists an Energetic solution $(\uu_\eps, z_\eps): [0,T] \to \Xeps$ to the rate-independent system
 $(\Xeps, \calE_\eps, \calR_\eps)$. Then, the pair
 $(\uu_\eps,\res{z}_\eps: = z_\eps{\circ}\resop)$,
 with 
 \begin{equation}
 \label{rescaled-sol-reg}
 \uu_\eps \in \mathrm{L}^\infty (0,T;  \mathrm{H}^1(\Omega;\R^d) ), \qquad \res{z}_\eps \in \mathrm{L}^\infty (0,T;  \Hs (\OD)) \cap \mathrm{BV}([0,T]; \mathrm{L}^1(\OD)),
 \end{equation}
 provides an Energetic solution to the rate-independent system  
$(\resXeps,  \res{E}_\eps, \res{R})$,   fulfilling 
\begin{subequations}
\label{enform-dam-rescaled}
\begin{itemize}
\item[--] the \emph{global stability condition}
\begin{equation}
\label{glSTAB-reseps}
\res{E}_\eps(t,\uu(t), \res{z}(t)) \leq \res{E}_\eps(t,\hat{\uu}, \hat{\res{z}})  + \res{R}(\hat{\res{z}}{-} \res{z}(t)) \qquad \text{for all } (\hat{\uu},   \hat{\res{z}})  \in  \resXeps;
\end{equation}
\item[--] the  \emph{global energy balance}
\begin{equation}
\label{glEnBal-reseps}
\res{E}_\eps(t,\uu(t), \res{z}(t)) +\mathrm{Var}_{\res{R}}(\res{z};[s,t])  = \res{E}_\eps(s,\uu(s), \res{z}(s))+ \int_s^t \partial_t \res{E}_\eps(r,\uu(r), \res{z}(r)) \dd r  \qquad \text{for all } [s,t]\subset [0,T]\,.
\end{equation}
\end{itemize}
\end{subequations}
We will denote by 
\[
\boldsymbol{\mathfrak{S}}_\eps: =\bigcup_{t\in [0,T]} \mathfrak{S}_\eps(t)
\]
the related stable set.
\subsection{The rate-independent system for cohesive delamination}
 The main result of the paper guarantees the  \emph{Evolutionary $\Gamma$-convergence} of systems  $(\resXeps,  \res{E}_\eps, \res{R})_\eps$ to a rate-independent system for cohesive delamination introduced below. 
 \par
 First of all, we  recall  the notation 
\begin{equation}
\label{limit-domains}
\Omega_- = (-L,0) \times \GC, \qquad \Omega_+ = (0,L) \times \GC, 
\end{equation}
 see Fig.\ \ref{fig2}b. \EEE Let us now settle the functional setup.  For   the displacement variable,  we will consider the function space
\[
\rmH_{\GDir}^1(\Omega_{-}{\cup}\Omega_+) : = \{ \uu \in \mathrm{H}^1(\Omega_{-}{\cup}\Omega_+) \, : \ \uu = 0 \text{ on } \GDir\}\,.
\]
Following  \cite{MiRoTh10DDNE},  we introduce the following notion of  weak convergence:  given $(\uu_n)_n,\, \uu \in \mathrm{H}^1(\Omega_-{\cup} \Omega_+;\R^d)$, we write
\begin{equation}
\label{Tau_u_cvg}
 \uu_n \TU \uu \quad  \text{ if and only if } \quad 
\uu_n \weakto \uu  \text{ in }  \mathrm{H}^1(\Omega_-^\nu{\cup} \Omega_+^\nu;\R^d) \text{ for all } \nu \in (0,\eta].
\end{equation}
 Observe that this convergence notion is in fact equivalent to convergence in $\mathrm{H}_{\mathrm{loc}}^1(\Omega_-{\cup} \Omega^+;\R^d)$.

The state space for the delamination variable $\res{z}$ shall encompass the information that $\res{z}$ does not depend on the variable $\res{x}_1$
\[
\res{Z} : = \{ \res{z} \in \mathrm{H}^1(\OD) \, : \ \partial_{\res{x}_1} \res{z}= 0\}\,,
\]
and reflects the loss of spatial regularity (from $\Hs (\OD)$, to $\mathrm{H}^1(\OD)$) due to the vanishing coefficient $\eps^\rho$ modulating the 
 contribution  $\res{A}_q^\eps$ \EEE  in 
\eqref{resE-eps}. 
\par
 The rate-independent system 
 obtained in the limit $\eps\down 0$  is codified by 
 \begin{itemize}
 \item[--]  the state space
 \begin{equation}
 \label{spaceX-lim}
 \res{X}=  \rmH_{\GDir}^1(\Omega_-{\cup} \Omega_+;\R^d)\ti \res{Z};
 \end{equation}
 in what follows, we will consider on $\res{X}$ the topology associated with the following convergence notion: for $(\uu_n,\res{z}_n) , \, (\uu,\res{z})\in \res{X}$, 
 \begin{equation}
\label{wk-cvg-resX}
(\uu_n,\res{z}_n) \weaktoX (\uu,\res{z}) \text{ in } \res{X}  \qquad\text{if and only if}\qquad \begin{cases}
\uu_n \TU \uu & \text{in } \mathrm{H}^1(\Omega_-{\cup} \Omega_+;\R^d),
\\
\res{z}_n \weakto \res{z} & \text{in } \mathrm{H}^1(\OD);
\end{cases}
 \end{equation}
  \item[--]  the driving energy functional $\res{E}:[0,T]\ti \res{X} \to   \R{\cup}\{\infty\}$
  \begin{subequations}
   \label{En-lim}
   \begin{equation}
 \label{En-lim-1}
 \res{E}(t,\uu, \res{z}) : =   \int_{\Omega_- {\cup} \Omega_+ }  \rmW(\nabla \uu) \dd x +\res{G}(\uu,\res{z}) +\res{F} (\res{z}) - \langle \effe(t), \uu \rangle_{\mathrm{H}^1(\Omega;\R^d)} 
\end{equation}
with the coupling energy
    \begin{equation}
 \label{En-lim-2}
 \res{G} :  \rmH_{\GDir}^1(\Omega_{-}{\cup}\Omega_+){\times}  \mathrm{L}^1(\OD) \to [0,+\infty), \qquad \res{G}(\uu, \res{z}): = \int_{\GC} \left (I_{\{\boldsymbol{0}\}}(\res{z} \JUMP{\uu}){+}  \frac\mu2 \EEE |\JUMP{\uu}|^2\right) \dd x'
 \end{equation}
 (recall that, for $\vv \in \R^d$
 we have
$I_{\{\boldsymbol{0}\}}(\vv) = 0 $ if $\vv = \boldsymbol{0}$, and $+\infty$ otherwise),
and 
    \begin{equation}
 \label{En-lim-3}
 \res{F} : \res{Z} \to  \R{\cup}\{\infty\}, \qquad 
 \res{F} (\res{z})  := 
  \int_{\OD}  \left[ \frac12 |\nabla \res{z}|^2{+}
\upphi (\res z) \right]  \dd x 
\end{equation}
\end{subequations}
(since $\res z \in \res{Z}$, we obviously have $ |\nabla \res{z}|^2 =  |\nabla_{x'} \res{z}|^2$).  We stress that, with a slight abuse of notation, in \eqref{En-lim-2} we have denoted by $\res{z}$ the trace of the map on $\GC$.\EEE
    \item[--]  the dissipation potential $\res{R}: 
   \mathrm{L}^1(\OD) \to [0,\infty]$ given by \eqref{resZ-eps}. 
 \end{itemize}
 \noindent
 Accordingly, 
 a pair $(\uu,\res{z}): [0,T] \to \res{X}$ is an Energetic solution to the rate-independent system  $(\res{X},  \res{E}, \res{R})$  if it complies with 
 \begin{subequations}
\label{enform-delam}
\begin{itemize}
\item[--] the \emph{global stability condition}
\begin{equation}
\label{glSTAB-reseps-bis}
\res{E}(t,\uu(t), \res{z}(t)) \leq \res{E}(t,\hat{\uu}, \hat{\res{z}})  + \res{R}(\hat{\res{z}}{-} \res{z}(t)) \qquad \text{for all } (\hat{\uu},   \hat{\res{z}})  \in  \res{X};
\end{equation}
\item[--] the  \emph{global energy balance}
\begin{equation}
\label{glEnBal-reseps-bis}
\res{E}(t,\uu(t), \res{z}(t)) +\mathrm{Var}_{\res{R}}(\res{z};[s,t])  = \res{E}(s,\uu(s), \res{z}(s))+ \int_s^t \partial_t \res{E}(r,\uu(r), \res{z}(r)) \dd r  \qquad \text{for all } [s,t]\subset [0,T]\,.
\end{equation}
\end{itemize}
\end{subequations}
We will denote by 
\[
\boldsymbol{\mathfrak{S}}: =\bigcup_{t\in [0,T]} \mathfrak{S}(t)
\]
the related stable set.

\subsection{Evolutionary $\Gamma$-convergence}
 We are now in a position to state the 
 main result of this paper, providing 
 the Evolutionary $\Gamma$-convergence, as the thickness $\eps\down 0$, of the rate-independent damage systems $(\Xeps,  \res{E}_\eps, \res{R})$ with well-prepared  initial data $(\uu_\eps^0, \res{z}_\eps^0)$ (where $ \res{z}_\eps^0 = z_\eps^0{\circ} \mathbb{T}_\eps$), to the rate-independent delamination system $(\res{X},  \res{E}, \res{R})$.

We point out that the assumption here below on the exponent $\rho$ in \eqref{Eeps-3} will be instrumental for Lemma \ref{l:liminf-crucial}, which in turn will be needed to establish an asymptotic lower bound for the elastic energies. 
 \begin{theorem}
 \label{thm:main}
Assume that $0<\rho< 2-\tfrac4\lambda$,
 with $\lambda = \min\{4, q\}$. 
  \EEE For each  $\eps \in (0,\eta]$, let $(\uu_\eps, \res{z}_\eps) \in \mathrm{L}^\infty(0,T;\Xeps)$ be an Energetic solution to the rate-independent damage systems $(\Xeps,  \res{E}_\eps, \res{R})$, emanating from the  initial data $(\uu_\eps^0, \res{z}_\eps^0)$. Suppose that there exists $(\uu_0,\res{z}_0) \in \res{X}$ such that,  along a null sequence $\eps_n\downarrow 0$ , the following convergences hold as $n\to\infty$:
\begin{equation}
\label{well-prepared-init-data}
\begin{cases}
(\uu_{\eps_n}^0, \res{z}_{\eps_n}^0)  \weaktoX ( \uu_0,  \res{z}_0)   & \text{in } \res{X}, 
\\
 \res{E}_{\eps_n}(0,\uu_{\eps_n}^0, \res{z}_{\eps_n}^0  ) \to  \res{E}(0,\uu_0, \res{z}_0). & 
 \end{cases}
\end{equation}
Then, there exist a pair $(\uu,\res{z}): [0,T] \to \res{X}$, with 
\begin{equation}
\label{limit-pair}
\uu \in \mathrm{L}^\infty(0,T;  \mathrm{H}^1(\Omega_{-}{\cup}\Omega_+;\R^d)  ), \qquad   \res{z} \in \mathrm{L}^\infty(0,T;  \mathrm{H}^1(\OD)) \cap  \BV([0,T];\mathrm{L}^1(\OD)),
\end{equation}
 such that, along a (not relabeled) subsequence, the following convergences hold as $n\to\infty$
 \begin{equation}
 \label{convergences-thm}
( \uu_{\eps_n}(t) ,\res{z}_{\eps_n}(t)) \weaktoX (\uu(t),  \res{z}(t))  \quad \text{in } \res{X}\qquad \text{for all } t \in [0,T], 
 \end{equation}
 and $(\uu,\res{z})$ is an Energetic solution to the rate-independent system $(\res{X},  \res{E}, \res{R})$.
 \end{theorem}

 \paragraph{\bf Roadmap for the proof.}
 We shall resort to the well-known result 
 \cite[Thm.\ 3.1]{MRS06} \EEE 
 and prove the following conditions (below, we use the notation $\resen$, $n\in\N_\infty: = \N{\cup}\{\infty\}$, to indicate both the energies $\res{E}_{\eps_n}$ for $n\in \N$, and  $\res{E}$ for $n=\infty$):
 \begin{itemize}
 \item[-] \textbf{Uniform power control:}
 \begin{equation}
 \label{power-control} 
 \tag{$\mathrm{C}.1$}
 \exists\, C_P>0 \ \forall\, n \in \N_{\infty} \ \forall\, (t,\uu,\res{z}) \in [0,T]\ti \resX \, : \qquad 
 |\partial_t \resen (t,\uu,\res{z})| \leq C_P (1{+} \resen(t,\uu, \res{z});
 \end{equation}
 observe that, as soon as  \eqref{power-control}  holds, we have
 \[
  \exists\, C_P'>0  \forall\, t \in [0,T]\, :  \sup_{t\in [0,T]} |\resen(t, \uu, \res{z}) | \leq C_P' \left(  \resen (t,\uu,\res{z}) {+}1\right).
 \]
 \item[-] \textbf{Compactness of energy sublevels:}
 \begin{equation}
  \tag{$\mathrm{C}.2$}
 \label{comp-en-subl}
 \begin{aligned}
 &
 \forall\, E \in \R \ \forall\, n \in \N_\infty 
 \\
 &
 \text{ the sublevels } L_{E}^n : = \{(\uu,\res{z}) \in \res{X}\, : \  \sup_{t\in [0,T]} |\resen(t, \uu, \res{z}) | \leq E \} \text{ are relatively sequentially compact in } \res{X};
 \end{aligned}
 \end{equation}
 \end{itemize}
 Additionally,  we will verify a series of  `compatibility' conditions, involving $(t_n,\uu_n,\res{z}_n)_n, \, (t,\uu,\res{z}) \in [0,T]{\times}\resX$ 
such that 
\[
t_n \to t,\  (\uu_n, \res{z}_n) \weaktoX (\uu,\res{z}) \text{ in } \resX, \  (\uu_n, \res{z}_n) \in \res{S}_{\eps_n}(t_n) \ \forall\, n \in \N
\]
namely
\begin{itemize}
 \item[-] 
 \textbf{Continuous convergence  of $\partial_t \resen$:}
 \begin{equation}
 \label{cvg-powers}
  \tag{$\mathrm{C}.3$}
\partial_t \resen(t_n,\uu_n,\res{z}_n) \to \partial_t \res{E}(t,\uu,\res{z});
 \end{equation} 
  \item[-] 
 \textbf{Lower $\Gamma$-limit for  $(\resen)_n$:}
 \begin{equation}
 \label{gamma-en}
  \tag{$\mathrm{C}.4$}
 \res{E}(t,\uu,\res{z}) \leq \liminf_{n\to\infty} \resen(t_n,\uu_n,\res{z}_n);
 \end{equation} 
  \item[-] 
 \textbf{Mutual recovery sequence ($\mathrm{MRS}$) condition:}  for all $(\hat{\uu},\hat{\res{z}}) \in \res{X}$ there exists a  
 \emph{mutual recovery sequence} $(\hat{\uu}_n,\hat{\res{z}}_n)_n \subset \res{X} $, with $(\hat{\uu}_n,\hat{\res{z}}_n)\weaktoX (\hat{\uu},\hat{\res{z}})$, such that 
 \begin{equation} 
 \label{MRS}
   \tag{$\mathrm{C}.5$}
 \limsup_{n\to\infty} \left(  \resen(t_n,\hat{\uu}_n,\hat{\res{z}}_n) {+} \res{R}(\hat{\res{z}}_n{-}\res{z}_n) {-}   
 \resen(t_n,\uu_n,\res{z}_n)  \right) \leq  \res{E}(t,\hat{\uu},\hat{\res{z}}) + \res{R}(\hat{\res{z}}{-}\res{z}) -   
 \resen(t,\uu,\res{z})\,. 
 \end{equation}
 \end{itemize}
 As shown in \cite{MRS06},  the $\mathrm{MRS}$ condition in fact guarantees  the \emph{closedness of the stable sets}, in the sense  that, for all sequences  $(t_n,\uu_n,\res{z}_n)_n, \, (t,\uu,\res{z}) \in [0,T]{\times}\resX$, 
 \[
 \begin{cases}
t_n \to t,  \ (\uu_n, \res{z}_n) \weaktoX (\uu,\res{z})   \text{ in } \resX, 
\\
 (\uu_n, \res{z}_n) \in \res{S}_{\eps_n}(t_n)  
\  \forall\, n \in \N,
 \end{cases}
  \ \Longrightarrow \ (\uu,\res{z}) \in \res{S}(t)\,.
\]

 \paragraph{\bf Overview of the proof.} The proof of Theorem \ref{thm:main} will be carried out as follows:
 \begin{itemize}
 \item[--] Conditions \eqref{power-control}--\eqref{cvg-powers} will be verified in Section \ref{s:4};
 \item[--] the lower $\Gamma$-limit condition \eqref{gamma-en} will be proved in Section  \ref{s:5};
 \item[--] the $\mathrm{MRS} $ condition \eqref{MRS} will be established in Section \ref{s:6}, along with the conclusion of the proof. 
 \end{itemize}

\section{Power control/power convergence properties,  coercivity of the energy}
\label{s:4}
\noindent
In this section, we prove the validity of \eqref{power-control}--\eqref{cvg-powers}. All the results in this section and the next one are valid  under the assumptions of Theorem \ref{thm:main}, therefore we will not explicitly repeat them. 

We begin by showing the first and third conditions.
\begin{lemma}
\label{lemma:C1-C3}
There exists $C_P>0$ such that for every $n \in \N_{\infty}$ and for all $(t,\uu,\res{z}) \in [0,T]\ti \resX$ it holds 
 \begin{equation*}
 |\partial_t \resen (t,\uu,\res{z})| \leq C_P (1{+} \resen(t,\uu, \res{z})).
 \end{equation*}
 Further, if \[
t_n \to t,\  (\uu_n, \res{z}_n) \weaktoX (\uu,\res{z}) \text{ in } \resX, \  (\uu_n, \res{z}_n) \in \res{S}_{\eps_n}(t_n) \ \forall\, n \in \N,
\]
then
 \begin{equation*}
 \partial_t \resen(t_n,\uu_n,\res{z}_n) \to \partial_t \res{E}(t,\uu,\res{z}).
 \end{equation*} 
\end{lemma}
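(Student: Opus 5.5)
The starting point is that the only time dependence in every energy $\resen$, $n\in\N_\infty$, sits in the loading term. Hence
\[
\partial_t \resen(t,\uu,\res{z}) = -\langle \effe'(t),\uu\rangle_{\mathrm{H}^1(\Omega;\R^d)},
\]
and this expression is the same for all $n$. By the support condition \eqref{force}, it only involves $\uu$ on a fixed set bounded away from the interface. Fix $\nu\in(0,\eta]$ with $\mathrm{supp}\,\effe(t)\subset \Omega_-^\nu\cup\Omega_+^\nu$ for all $t$ (and similarly for $\effe'$, which is a limit of difference quotients). Arguing as for \eqref{partial-control-eta}, we get
\[
|\partial_t\resen(t,\uu,\res{z})|\le \|\effe\|_{\rmC^1([0,T];\mathrm{H}^1(\Omega;\R^d)^*)}\,\|\uu\|_{\mathrm{H}^1(\Omega_-^\nu\cup\Omega_+^\nu)} .
\]
One point needs care here: for $n=\infty$, $\uu$ may jump across $\GC$. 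The pairing therefore has to be read as acting on $\uu$ restricted to $\Omega_-^\nu\cup\Omega_+^\nu$, which is meaningful because of the support condition.

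Next, bound $\|\uu\|_{\mathrm{H}^1(\Omega_-^\nu\cup\Omega_+^\nu)}$ by the energy. All contributions of $\resen$ other than $\rmW$ and the loading are nonnegative: $\WD\ge0$, $\res{A}^\eps,\res{A}_q^\eps\ge0$, $\res G\ge0$. The term $\int\upphi(\res z)$ is bounded below because $\upphi$ is convex and l.s.c.\ with domain in $[0,1]$, and $\OD$ is bounded. Using \eqref{Wgeneral}, this gives
\[
\resen(t,\uu,\res z)\ge c_\rmW\|\nabla\uu\|^2_{\mathrm{L}^2(\Omega_-^{\eps_n}\cup\Omega_+^{\eps_n})}-C-\|\effe\|_{\mathrm{L}^\infty(0,T;\mathrm{H}^1(\Omega;\R^d)^*)}\|\uu\|_{\mathrm{H}^1(\Omega_-^\nu\cup\Omega_+^\nu)}.
\]
The Dirichlet condition on $\GDir$ lets us apply Poincaré's inequality on each cuboid $(-L,-\nu)\times\GC$ and $(\nu,L)\times\GC$, whose constants are independent of $n$. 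Combined with Young's inequality, this yields
\[
\|\uu\|^2_{\mathrm{H}^1(\Omega_-^\nu\cup\Omega_+^\nu)}\le C(1+\resen(t,\uu,\res z))
\]
uniformly in $n$. Then $|\partial_t\resen|\le C(1+\|\uu\|^2)\le C_P(1+\resen)$, which is \eqref{power-control}; a linear bound would also suffice. I expect this uniform coercivity, independent of $n$ and of the vanishing layer, to be the main point of the proof. It works because only the undamageable parts, of fixed size, enter the estimate.

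For the convergence claim we have $t_n\to t$ and $\uu_n\TU\uu$, so $\uu_n\weakto\uu$ in $\mathrm{H}^1(\Omega_-^\nu\cup\Omega_+^\nu)$. Since $\effe\in\rmC^1$, we also have $\effe'(t_n)\to\effe'(t)$ strongly in the dual. We split
\[
\langle\effe'(t_n),\uu_n\rangle-\langle\effe'(t),\uu\rangle=\langle\effe'(t_n)-\effe'(t),\uu_n\rangle+\langle\effe'(t),\uu_n-\uu\rangle .
\]
The first term tends to zero by strong-times-bounded convergence, since weakly convergent sequences are bounded. The second tends to zero by weak convergence, because the pairing is localized to $\Omega_-^\nu\cup\Omega_+^\nu$. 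Stability of $(\uu_n,\res z_n)$ is not needed for this step.
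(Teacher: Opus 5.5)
Your proof is correct and follows the paper's argument. The power reduces to $-\langle \effe'(t),\uu\rangle$, which the support condition \eqref{force} localizes to the undamageable parts. It is then controlled by the energy through the coercivity in \eqref{Wgeneral} together with Poincar\'e and Young. Convergence of the powers follows from $\effe\in\rmC^1([0,T];\mathrm{H}^1(\Omega;\R^d)^*)$ and the $\tau_U$-convergence of $\uu_n$.

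You also make explicit details the paper leaves implicit: the support of $\effe'$, and reading the pairing on the localized domain when $\uu$ jumps across $\GC$. One small point: take $\nu=\eta$ explicitly, so that $\Omega_\pm^\nu\subset\Omega_\pm^{\eps_n}$ for every $n$.
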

\begin{proof}
Recalling the definition of $\res{Q}_{\mathrm{el}}^{\eps}$ in Subsection \ref{subs:rescaling} and \eqref{resE-eps}, by \eqref{partial-control-eta} we find
\begin{equation}
\label{eq:power-computation}
\partial_t\res{E}_{\eps_n}(t,\uu,\res{z})=\partial_t\res{Q}_{\mathrm{el}}^{\eps_n}(t,\uu,\res{z})=-\langle \effe'(t), \uu \rangle_{\mathrm{H}^1(\Omega;\R^d)}.
\end{equation}
Thus,
\begin{align*}
|\partial_t\res{E}_{\eps_n}(t,\uu,\res{z})|\leq \|\effe'\|_{\rmL^{\infty}(0,T;\mathrm{H}^1_{\Gamma_{\mathrm{dir}}}(\Omega;\mathbb{R}^d)^*)}\| \uu \|_{\mathrm{H}^1(\Omega_{-,\eta}{\cup}\Omega_{+,\eta};\R^d)}\,\leq C_P(1{+} \resen(t,\uu, \res{z})),
\end{align*}
where in the latter step, we have used a combination of Poincar\'e and Young inequalities with \eqref{Wgeneral}, and where the constant $C_P>0$ only depends on $\Omega$, $W$, and $\effe$.

The second part of the statement follows by \eqref{eq:power-computation} and \eqref{force}.
\end{proof}


We pin down, for later use, the following estimate 
\begin{lemma}
Let $\uu \in   \mathrm{H}^1(\Omega_-{\cup} \Omega_+;\R^d)$ with traces $\uu_\pm$ on $\Omega_\pm$.  Then, for every $\nu \in (0,\eta]$ there holds
\begin{equation}
\label{trivial-but-useful}
\| \uu ({\pm}\nu, \cdot) - \uu_\pm \|_{\mathrm{L}^2(\GC)} \leq \nu^{1/2} \| \uu \|_{\mathrm{H}^1(\Omega_\pm)}\,. 
\end{equation}
\end{lemma}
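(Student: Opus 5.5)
The estimate is a one-dimensional fundamental theorem of calculus in the $x_1$-direction, combined with Cauchy--Schwarz and Fubini. Since smooth functions up to the boundary are dense in $\mathrm{H}^1(\Omega_\pm)$, and both sides of \eqref{trivial-but-useful} are continuous with respect to the $\mathrm{H}^1(\Omega_\pm)$-norm, it suffices to prove the inequality for $\uu \in \mathrm{C}^1(\overline{\Omega_+};\R^d)$. The case of $\Omega_-$ is symmetric. For continuity of the left-hand side, I would use the trace theorem on the slices $\{\nu\}\times\GC$ and $\{0\}\times\GC$, both of which are hyperplane sections of the Lipschitz domain $\Omega_+$.

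For smooth $\uu$ and a.e.\ $x'\in\GC$, I would write
\[
\uu(\nu,x') - \uu_+(x') = \int_0^\nu \partial_{x_1}\uu(s,x')\dd s.
\]
Cauchy--Schwarz then gives
\[
|\uu(\nu,x') - \uu_+(x')|^2 \leq \nu \int_0^\nu |\partial_{x_1}\uu(s,x')|^2\dd s .
\]
Integrating over $x'\in\GC$ and applying Fubini yields
\[
\| \uu(\nu,\cdot)-\uu_+\|^2_{\mathrm{L}^2(\GC)} \leq \nu \int_{(0,\nu)\times\GC} |\partial_{x_1}\uu|^2 \dd x \leq \nu \|\nabla \uu\|^2_{\mathrm{L}^2(\Omega_+)} \leq \nu \|\uu\|^2_{\mathrm{H}^1(\Omega_+)}.
\]
Here I use $\nu\le\eta<L$, so that $(0,\nu)\times\GC\subset\Omega_+$. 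Taking square roots gives the claim.

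The only delicate point is justifying the density and trace step: one needs $\uu(\pm\nu,\cdot)$ to be the trace of $\uu$ on the interior slice. This follows because $\uu|_{(\nu,L)\times\GC}$ and $\uu|_{(0,\nu)\times\GC}$ have the same trace there, and the trace operator is continuous on $\mathrm{H}^1$ of these cuboids. With that in place, the limit passage in the smooth inequality is immediate.
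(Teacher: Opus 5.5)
Your proof is correct and matches the paper's argument: the fundamental theorem of calculus along $x_1$ on $(0,\nu)$, then Cauchy--Schwarz, then integration over $\GC$, with $\|\partial_{x_1}\uu\|^2_{\mathrm{L}^2((0,\nu)\times\GC)}$ bounded by $\|\uu\|^2_{\mathrm{H}^1(\Omega_+)}$. The density/trace justification you add is left implicit in the paper; one could equally invoke the absolute continuity of Sobolev functions on a.e.\ line parallel to the $x_1$-axis.
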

\begin{proof}
It suffices for instance to observe that 
\[
\begin{aligned}
\| \uu (\nu, \cdot) - \uu_+ \|_{\mathrm{L}^2(\GC)}^2  & =\int_{\GC} |\uu (\nu,x'){-} \uu_+(x')|^2 \dd x'
\\
& 
= \int_{\GC} \left| \int_0^\nu \partial_{x_1} \uu (s,x') \dd s \right|^2 \dd x'
\\
& 
\leq \nu \iint_{(0,\nu){\times}\GC} |\partial_{x_1} \uu (s,x')|^2 \dd s \dd x' \leq  \nu  \| \uu \|^2_{\mathrm{H}^1 (\Omega_+) \EEE}\,.
\end{aligned}
\]
\end{proof}

\par

We immediately have the following result.
\begin{lemma}[Uniform coercivity properties]
\label{lemma:coercivity}
The following holds
\begin{equation}
\label{uniform-coercivity-properties}
\begin{aligned}
&
\forall\, E>0 \ \ \exists\, C_E>0 \ \ \forall\, \eps \in (0,\eta] 
\ \ \forall\, \nu \in [\eps,\eta] 
 \ \   \forall\, t \in [0,T] \ \forall\, \res{z} \in \mathrm{L}^1(\OD) 
\ \forall\, \uu \in \mathrm{Argmin}_{\vv \in \rmH_{\GDir}^1(\Omega;\R^d)}  \res{Q}_{\mathrm{el}}^\eps(t,\vv,\res{z}) \,: 
\\
&
 \res{E}_\eps(t,\uu,\res{z}) \leq E \ \Longrightarrow \ 
 \| \uu \|_{\mathrm{H}^1(\Omega_-^{\nu}{\cup}\Omega_+^{\nu};\R^d)} 
 +\| [\gr^\eps(\resop^{-1}(\res{z}))]^{1/2}  \nabla\uu\|_{\mathrm{L}^2(\OD^\eps)}
 \\
 & \qquad \qquad \qquad \qquad \qquad \qquad \qquad 
+\eps^\rho  \res{A}_{q}^\eps(\res{z})  \EEE
 + \res{A}^\eps(\res{z})    \leq C_E\,.
 \end{aligned}
\end{equation}
 In particular,   there exists a possibly different constant $C_E'$ such that 
\begin{equation}
\label{ci-salvano}
\eps^{\rho/q} \| \frac1{\eps} \partial_{\res{x}_1} \res{z}\|_{\mathrm{L}^{q}(\OD)}  + \eps^{\rho/q} \| \nabla_{x'} \res{z}\|_{\mathrm{L}^{q}(\OD)} \leq C_E'\,.
\end{equation}
\end{lemma}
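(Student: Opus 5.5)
The plan is to split the energy into nonnegative pieces plus the loading term, and then absorb the loading term into the elastic energy on the undamageable parts. Writing $\res{E}_\eps = \res{Q}_{\mathrm{el}}^\eps + \res{F}_\eps$, I first note that every term of $\res{F}_\eps$ is nonnegative except possibly $\int_{\OD}\upphi(\res z)\dd\res x$. Since $\upphi$ is convex and lower semicontinuous with domain in $[0,1]$, it is bounded below by an affine function, hence bounded below on $[0,1]$; this term is therefore $\geq -C$, with $C$ depending only on $\upphi$ and $|\OD|$. Next I use \eqref{Wgeneral} on $\Omegaem\cup\Omegaep$ and $\WD\geq 0$ on $\OD^\eps$. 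Together with the support condition \eqref{force} in the form \eqref{partial-control-eta}, applied with $\eta$ replaced by $\eps$ (admissible because the support of $\effe(t)$ avoids every $\overline{\OD^\eps}$), this gives
\[
E\geq \res{E}_\eps(t,\uu,\res z)\geq c_{\rmW}\|\nabla\uu\|^2_{\rmL^2(\Omegaem\cup\Omegaep)} + \tfrac{\mu}{2}\|[\gr^\eps]^{1/2}\nabla\uu\|^2_{\rmL^2(\OD^\eps)} + \tfrac{\eps^\rho}{q}\res A^\eps_q(\res z)+\tfrac12\res A^\eps(\res z) - C - \|\effe\|_{\rmL^\infty(0,T;\mathrm{H}^1(\Omega;\R^d)^*)}\|\uu\|_{\mathrm{H}^1(\Omegaem\cup\Omegaep)}.
\]

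The main point is then a Poincaré inequality on $\Omega_\pm^\eps$ that is uniform in $\eps\in(0,\eta]$. Since $\uu=0$ on $\GDir$, and each $\Omega_\pm^\eps$ is a cuboid of length $L-\eps\leq L$ in $x_1$ with one Dirichlet face, the one-dimensional estimate along $x_1$ yields $\|\uu\|_{\rmL^2(\Omega_\pm^\eps)}\leq L\|\partial_{x_1}\uu\|_{\rmL^2(\Omega^\eps_\pm)}$. The constant is independent of $\eps$. I then apply Young's inequality to the last term, absorbing $\|\uu\|_{\mathrm{H}^1}$ into $\tfrac{c_{\rmW}}{2}\|\nabla\uu\|^2$, and obtain a bound on $\|\uu\|_{\mathrm{H}^1(\Omegaem\cup\Omegaep)}$ and on all the other terms by a constant $C_E$. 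For $\nu\in[\eps,\eta]$ we have $\Omega_\pm^\nu\subset\Omega_\pm^\eps$, so this gives the $\mathrm{H}^1(\Omega^\nu_-\cup\Omega^\nu_+)$ bound. This argument does not actually use the minimality of $\uu$. The Argmin assumption only guarantees that the class considered is the relevant one, and it could alternatively be used to compare with $\vv=0$. I expect the only delicate step to be verifying that the loading is controlled solely by $\uu$ outside $\OD^\eps$ with $\eps$-independent constants, and this is exactly what \eqref{force} provides.

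For \eqref{ci-salvano}, I use $(a^2+b^2)^{q/2}\geq \max\{|a|^q,|b|^q\}$. This gives $\|\tfrac1\eps\partial_{\res x_1}\res z\|^q_{\rmL^q(\OD)}+\|\nabla_{x'}\res z\|^q_{\rmL^q(\OD)}\leq 2\res A^\eps_q(\res z)$, hence $\eps^\rho$ times the left-hand side is $\leq 2C_E$. Taking $q$-th roots yields the claim with $C_E'=2(2C_E)^{1/q}$.
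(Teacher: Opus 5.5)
Your proof is correct and follows essentially the same route as the paper. Both arguments use the lower bounds from \eqref{Wgeneral}, $\WD\geq 0$ and $\upphi$ bounded below, control the loading through the support condition, absorb it with Poincar\'e and Young, and read off \eqref{ci-salvano} from $\eps^\rho\res{A}^\eps_q(\res{z})\leq C_E$. The only difference is cosmetic: you estimate on $\Omega_\pm^\eps$ and then restrict to $\Omega_\pm^\nu\subset\Omega_\pm^\eps$, with the $\eps$-uniform Poincar\'e constant made explicit, whereas the paper discards the strips between $\pm\eps$ and $\pm\nu$ via $\rmW\geq -C_{\rmW}$ and leaves the Poincar\'e step implicit.
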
 
\begin{proof}
First of all, observe that \eqref{Wgeneral} and \eqref{partial-control-eta} yield
\begin{equation}
\label{calc-prelim}
\begin{aligned}
&  \int_{\Omega_-^{\nu} \cup \Omega_+^{\nu} }  \rmW(\nabla\uu) \dd x 
   + \int_{\OD^{\eps}}  \WD(\res{z} {\circ} \resmap^{-1},\nabla\uu) \dd x 
  - \langle \effe(t), \uu \rangle_{\mathrm{H}^1(\Omega;\R^d)}
\\
 & \geq c_{W}  \int_{\Omega_-^{\nu} \cup \Omega_+^{\nu} } |\nabla \uu|^2 \dd x -C_W|\Omega|
  - \|\effe\|_{\mathrm{L}^\infty(0,T;\mathrm{H}^1(\Omega;\R^d)^*) } \| \uu \|_{\mathrm{H}^1(\Omega_{-,\eta}{\cup}\Omega_{+,\eta};\R^d)}
\geq - C\,.
\end{aligned}
\end{equation}
Therefore, from  $E\geq   \res{E}_\eps(t,\uu,\res{z})  $ we gather that 
$E\geq \res{F}_{\eps}(\res{z})$ and,  a fortiori, that $\res z(x) \in [0, 1] \subset [0,1]$ for all $x\in \CLOD$. Thus, taking into account that $\upphi$ is bounded from below by  a linear function, we ultimately have 
\[
\int_{\OD} \upphi(\res z) \dd x \geq -C\,.
\]
Therefore, we infer the following chain of inequalities  for all $\uu \in \mathrm{Argmin}_{\vv \in \mathrm{H}^1(\Omega;\R^d)} \res{Q}_{\mathrm{el}}^\eps(t,\vv,\res{z})  $
and all $ \eps 
\leq \nu \leq \eta$
\[\begin{aligned}
E\geq   \res{E}_\eps(t,\uu,\res{z})  & 
\geq 
  \int_{\Omega_-^{\nu} \cup \Omega_+^{\nu} }  \rmW(\nabla\uu) \dd x 
   + \int_{\OD^{\eps}}  \WD(\res{z} {\circ} \resmap^{-1},\nabla\uu) \dd x 
  - \langle \effe(t), \uu \rangle_{\mathrm{H}^1(\Omega;\R^d)}
  \\
  & \quad 
  +  \frac{\eps^\rho}q \EEE \res{A}_{q}^\eps(\res{z})
   + \frac12\res{A}^\eps(\res{z})  \EEE
  -C
\\
&\geq c_W \| \uu \|_{\mathrm{H}^1(\Omega_-^{\nu} \cup \Omega_+^{\nu} ;\R^d)}^2  
+ \int_{\OD^{\eps}}  \WD(\res{z} {\circ} \resmap^{-1},\nabla\uu) \dd x 
+  \frac{\eps^\rho}q \EEE \res{A}_{q}^\eps(\res{z})
 + \frac12\res{A}^\eps(\res{z})  
\\
& \quad
-C\|\effe\|_{\mathrm{L}^\infty(0,T;\mathrm{H}^1(\Omega;\R^d)^*) }^2  -C,
\end{aligned}
\]
where the last estimate follows from \eqref{calc-prelim}.  Thus, \eqref{uniform-coercivity-properties} and \eqref{ci-salvano} ensue.

\end{proof}

We conclude this section with a result collecting the  compactness properties of sequences with equibounded  energy, cf.\ \cite[Prop.\ 5]{MiRoTh10DDNE}.  In particular, we prove the validity of \eqref{comp-en-subl}.  \EEE
 \begin{proposition}
 Let $(t_\eps,\res{z}_\eps)_\eps\subset [0,T]{\times}\Hs(\OD)$ fulfill
\[
\sup_{\eps \in (0,\eta]}  \res{E}_\eps(t_\eps,\uu_\eps,\res{z}_\eps) \leq E \qquad \text{for some } E>0.
\] 
Let  $(\uu_\eps)_\eps\subset  \rmH_{\GDir}^1(\Omega;\R^d)$ with 
$\uu_\eps \in \mathrm{Argmin}_{\vv \in \rmH_{\GDir}^1(\Omega;\R^d)}  \res{Q}_{\mathrm{el}}^\eps(t_\eps,\vv,\res{z}_\eps)$ for all $\eps \in (0,\eta]$.
\par
 Then, for every null sequence $(\eps_n)_n \subset  (0,\eta]$ there exist a (not relabeled) subsequence,  $\res z \in  \mathrm{H}^1(\OD)$ \EEE 
 and $\uu \in \rmH_{\GDir}^1(\Omega_{-}{\cup}\Omega_+) $ such that, as $n\to\infty$,
 \begin{equation}
 \label{compactness}
 \begin{aligned}
 &
 \uu_{\eps_n} \TU \uu&&  \text{ in } &&  \rmH_{\GDir}^1(\Omega_{-}{\cup}\Omega_+),
 \\
 &
 \res{z}_{\eps_n}\weakto \res z &&  \text{ in } &&  \mathrm{H}^1(\OD)\,, \EEE           
 \end{aligned}
 \end{equation}
 and the sequence 
\begin{equation}
\label{bounded}
(\uu_{\eps_n} )_n \text{ is bounded in $\mathrm{L}^{2^*}(\Omega_- {\cup} \Omega_+;\R^d)$},
\end{equation}
where  $2^*$ is the critical exponent of the Sobolev embedding $\mathrm{H}^1 (\Omega^{\pm}) \subset \mathrm{L}^p(\Omega^{\pm})$, i.e.\ $2^* = \frac{2d}{d-2}$. 
Moreover,
 $0 \leq \res{z}(x)\leq  1$ for almost  all $x \in  \OD$ and $\partial_{\res{x}_1} \res{z} \equiv 0$.
 \end{proposition}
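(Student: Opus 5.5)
The argument combines the uniform bounds of Lemma \ref{lemma:coercivity} with a diagonal extraction over the exhaustion $\nu \downarrow 0$.

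\textbf{Step 1: bounds on the displacements.} Fix a null sequence $(\eps_n)_n$. By Lemma \ref{lemma:coercivity}, for every $\nu\in(0,\eta]$ and all $n$ with $\eps_n\le\nu$, we have
\[
\|\uu_{\eps_n}\|_{\mathrm{H}^1(\Omega_-^\nu\cup\Omega_+^\nu;\R^d)}+\res{A}^{\eps_n}(\res{z}_{\eps_n})\le C_E ,
\]
with $C_E$ independent of $\nu$ and $n$. For each fixed $\nu$, weak compactness gives a limit in $\mathrm{H}^1(\Omega_-^\nu\cup\Omega_+^\nu;\R^d)$. I will take $\nu_k=\eta/k$ and run a diagonal argument. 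This yields a single subsequence and a function $\uu$ on $\Omega_-\cup\Omega_+$ such that $\uu_{\eps_n}\weakto\uu$ in $\mathrm{H}^1(\Omega_-^{\nu_k}\cup\Omega_+^{\nu_k})$ for every $k$. Since any $\nu\in(0,\eta]$ satisfies $\nu\ge\nu_k$ for some $k$, this is exactly $\TU$-convergence.

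By weak lower semicontinuity, $\|\uu\|_{\mathrm{H}^1(\Omega_\pm^{\nu})}\le C_E$ for every $\nu$. Monotone convergence as $\nu\downarrow0$ then gives $\uu\in\mathrm{H}^1(\Omega_-\cup\Omega_+)$. The Dirichlet condition on $\GDir$ passes to the limit because traces on $\GDir$ are weakly continuous on $\Omega_\pm^{\eta}$.

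\textbf{Step 2: the $\mathrm{L}^{2^*}$ bound \eqref{bounded}.} The bound on $\Omega_\pm^{\eps_n}$ follows from the $\mathrm{H}^1(\Omega_\pm^{\eps_n})$ bound with $\nu=\eps_n$. These domains are cuboids whose thickness in $x_1$ stays bounded below by $L-\eta$, so the Sobolev constants are uniform. For $d=2$ one instead uses an arbitrary finite exponent.

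The remaining piece is the thin layer $\OD^{\eps_n}\cap(\Omega_-\cup\Omega_+)$. Here one only has $\eps_n\int_{\OD^{\eps_n}}|\nabla\uu_{\eps_n}|^2\le C$, coming from $\gr^{\eps}\ge\eps$ and the weighted bound in \eqref{uniform-coercivity-properties}. I would first try the one-dimensional estimate of \eqref{trivial-but-useful} type in $x_1$. Write $\uu_{\eps_n}(x_1,x')=\uu_{\eps_n}(\eps_n,x')-\int_{x_1}^{\eps_n}\partial_1\uu_{\eps_n}$. The integral term has $\mathrm{L}^2(\OD^{\eps_n})$ norm at most $\eps_n\|\partial_1\uu_{\eps_n}\|_{\mathrm{L}^2(\OD^{\eps_n})}\le C\eps_n^{1/2}$.

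To upgrade this to $\mathrm{L}^{2^*}$, I would combine it with the Sobolev inequality applied to the rescaled layer. The rescaled gradient has norm $\eps_n^{1/2}\|\nabla\uu_{\eps_n}\|_{\mathrm{L}^2(\OD^{\eps_n})}\cdot\eps_n^{-1/2}$, which may blow up. As a fallback, I would exploit the minimality $\uu_\eps\in\mathrm{Argmin}$: truncating by a competitor that interpolates linearly between the traces on $\Gamma_{\pm\eps}$ gives a competitor energy comparable to $\eps^{-1}\cdot\eps\|\JUMP{\cdot}\|^2$. This should show the layer contribution is controlled. I expect this step to be the main technical obstacle.

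\textbf{Step 3: the damage variables.} The bound $\res{A}^{\eps_n}(\res{z}_{\eps_n})\le C$ gives
\[
\|\nabla_{x'}\res{z}_{\eps_n}\|_{\mathrm{L}^2}\le C,\qquad \|\partial_{\res{x}_1}\res{z}_{\eps_n}\|_{\mathrm{L}^2}\le C\eps_n .
\]
As in the proof of Lemma \ref{lemma:coercivity}, the finiteness of $\res{F}_\eps$ forces $\upphi(\res{z}_{\eps_n})\in\mathrm{L}^1$, hence $0\le\res{z}_{\eps_n}\le1$ a.e.; this gives an $\mathrm{L}^2$ bound. Hence $\res{z}_{\eps_n}$ is bounded in $\mathrm{H}^1(\OD)$, and a further subsequence converges weakly to some $\res{z}$. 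By Rellich's theorem the convergence is also strong in $\mathrm{L}^2$, so $0\le\res{z}\le1$ a.e. Since $\partial_{\res{x}_1}\res{z}_{\eps_n}\to0$ strongly and $\partial_{\res{x}_1}$ is weakly continuous, $\partial_{\res{x}_1}\res{z}=0$, i.e.\ $\res{z}\in\res{Z}$.
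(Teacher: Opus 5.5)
Your Steps 1 and 3 are correct, and so is the $\mathrm{L}^{2^*}$ bound on $\Omega^{\eps_n}_-\cup\Omega^{\eps_n}_+$. Together they are what the paper's one-line proof (Lemma \ref{lemma:coercivity} plus \cite[Prop.~5]{MiRoTh10DDNE}) gives for the $\TU$-convergence and for the properties of $\res z$. The genuine gap is \eqref{bounded} inside the layer $\OD^{\eps_n}$, which you leave open, and neither of your two attempts can close it.

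Inside $\OD^\eps$, the only information from \eqref{uniform-coercivity-properties} is $\|[\gr^\eps]^{1/2}\nabla\uu_\eps\|_{\mathrm{L}^2(\OD^\eps)}\le C$, and this is compatible with $\|\uu_\eps\|_{\mathrm{L}^{2^*}(\OD^\eps)}\to\infty$. Add $\vv_\eps(x)=\eps^{-(d-1)/2}\phi((x-x_0)/\eps)\boldsymbol{e}_1$, with $\phi\in\mathrm{C}^\infty_c(B_{1/2}(0))$ and $x_0\in\{0\}\times\GC$ in a region where $\gr^\eps=\eps$. All bounds of Lemma \ref{lemma:coercivity} survive up to $O(1)$, since $\eps\|\nabla\vv_\eps\|^2_{\mathrm{L}^2}=\|\nabla\phi\|^2_{\mathrm{L}^2}$, whereas $\|\vv_\eps\|_{\mathrm{L}^{2^*}}=\eps^{-1/2}\|\phi\|_{\mathrm{L}^{2^*}}$. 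So the Argmin property must enter in an essential way.

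Your first attempt does not use it, and it already fails in its first term. The trace $\uu_\eps(\eps,\cdot)$, extended constantly in $x_1$, is not controlled in $\mathrm{L}^{2^*}(\OD^\eps)$, because traces of $\mathrm{H}^1$-bounded functions are only bounded in $\mathrm{L}^{2(d-1)/(d-2)}(\GC)$. A trace spike of width $\delta\ll\eps$ makes $2\eps\|\uu_\eps(\eps,\cdot)\|^{2^*}_{\mathrm{L}^{2^*}(\GC)}$ of order $\eps/\delta$.

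Your fallback does use minimality, but only to produce another upper bound for the layer energy $\int_{\OD^\eps}\WD\dd x$. That is information of exactly the kind the example above shows to be insufficient. Besides, your size $\eps^{-1}\cdot\eps\|\JUMP{\cdot}\|^2$ presumes $\gr^\eps\approx\eps$, which holds only where $\res z_\eps$ is small.

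What is needed is an estimate of the values of $\uu_\eps$ in $\OD^\eps$ in terms of $\uu_\eps|_{\Omega^\eps_\pm}$. Since $\mathrm{supp}\,\effe(t)\cap\overline{\OD^\eps}=\emptyset$ and $\WD$ is isotropic with a scalar coefficient, each component of $\uu_\eps|_{\OD^\eps}$ minimizes $\int_{\OD^\eps}\gr^\eps(\res z_\eps\circ\mathrm{T}_\eps^{-1})|\nabla v|^2\dd x$ among all $v$ with the same traces on $\Gamma_{\pm\eps}$. In other words, it is a weak solution of a scalar elliptic equation with natural conditions on the lateral boundary. From this you must derive an $\mathrm{L}^{2^*}$ bound of comparison or maximum-principle type. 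That bound has to cope with an ellipticity ratio $(1+\eps)/\eps$ of the coefficient; a plain energy-plus-Sobolev argument loses exactly the factor $\eps^{-1/2}$. This is the part the paper delegates to \cite[Prop.~5]{MiRoTh10DDNE}. As written, your proposal proves \eqref{bounded} only on $\Omega^{\eps_n}_-\cup\Omega^{\eps_n}_+$.
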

 The \emph{proof} follows by combining the estimates of 
  Lemma \ref{lemma:coercivity} with the arguments from   \cite[Prop.\ 5]{MiRoTh10DDNE}. 

 \section{$\Gamma$-liminf of the energy}
 \label{s:5}
 
This section is devoted to the proof of condition \eqref{gamma-en}. The main difficulty consists in showing that the asymptotic behavior of the bulk contribution on the damage part gives rise, in the relaxation, to a jump of the displacement. This is shown in the result below.



 \begin{proposition}
 \label{prop:limit-jump}
 Assume $0<\rho< 2-\tfrac4\lambda$,
 with $\lambda = \min\{4, q\}$.
  Let $(\uu_{\eps_n},\res{z}_{\eps_n})_n \subset   \rmH_{\GDir}^1(\Omega_{-}{\cup}\Omega_+;\R^d){\ti}  \Hs(\OD)$, \EEE
  $(\uu,\res z) \in  \rmH_{\GDir}^1(\Omega_{-}{\cup}\Omega_+;\R^d){\times} \mathrm{H}^1(\OD)$
 fulfill as $n\to\infty$
 \begin{equation}
 \label{convergences-u/z}
 \uu_{\eps_n} \TU \uu \text{ in } \rmH_{\GDir}^1(\Omega_{-}{\cup}\Omega_+;\R^d)), \qquad \res{z}_{\eps_n}\weakto \res z \text{ in } \mathrm{H}^1(\OD),
 \end{equation}
 with $
\sup_{\eps \in (0,\eta]}  \res{E}_\eps(t_\eps,\uu_{\eps},\res{z}_\eps) \leq E$  for some $ E>0.$
\par
 Then,
 \[
\ \res{G}(\res{z}, \uu) \leq \liminf_{n\to\infty} \int_{\OD^{\eps_n}}  \WD(\res{z}_{\eps_n} {\circ} \mathrm{T}_{\eps_n}^{-1},\nabla\uu_{\eps_n}) \dd x \doteq   L_{\mathrm{bulk}}\,. \EEE 
 \]
  \end{proposition}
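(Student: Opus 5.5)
We have to show two things about the limit: the brittle constraint $\res z\JUMP{\uu}=0$ holds a.e.\ on $\GC$ whenever $L_{\mathrm{bulk}}<\infty$, and $\tfrac\mu2\int_{\GC}|\JUMP{\uu}|^2\dd x'\le L_{\mathrm{bulk}}$. We pass to a subsequence realizing the $\liminf$, and work along one-dimensional fibres in the $x_1$ direction, i.e.\ for fixed $x'\in\GC$ on $(-\eps,\eps)$. Write $\eps=\eps_n$, $\uu_\eps=\uu_{\eps_n}$, $z_\eps=\res z_\eps\circ\resmap^{-1}$.

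\textbf{Jump term.} Since $\gr^\eps(z)=z^2+\eps$, we drop the $z^2$ part and get
\[
\int_{\OD^\eps}\WD(z_\eps,\nabla\uu_\eps)\dd x\ \ge\ \frac{\mu\eps}{2}\int_{\GC}\int_{-\eps}^{\eps}|\partial_{x_1}\uu_\eps|^2\dd x_1\dd x' .
\]
By Jensen on each fibre this is
\[
\ge \frac{\mu\eps}{2}\cdot\frac{1}{2\eps}\int_{\GC}|\uu_\eps(\eps,x')-\uu_\eps(-\eps,x')|^2\dd x'
=\frac\mu4\int_{\GC}|\uu_\eps(\eps,\cdot)-\uu_\eps(-\eps,\cdot)|^2\dd x' .
\]
This bound carries a factor $\tfrac14$, not $\tfrac12$, so the thickness is measured on the rescaled domain $(-1,1)$. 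We will recheck the normalization at this step. If the paper's convention yields $\tfrac\mu4$, the intended constant should be tracked through $\WD$; otherwise a factor $2$ from the layer thickness $2\eps$ must be reconciled, and we will adjust accordingly.

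\textbf{Passing to the limit in the traces.} Fix $\nu\in(0,\eta]$. We split
\[
\uu_\eps(\pm\eps,\cdot)=\uu_\eps(\pm\nu,\cdot)+\big(\uu_\eps(\pm\eps,\cdot)-\uu_\eps(\pm\nu,\cdot)\big).
\]
The analogue of \eqref{trivial-but-useful} on $(\eps,\nu)\times\GC$ bounds the second term by $\nu^{1/2}\|\uu_\eps\|_{\mathrm{H}^1(\Omega_\pm^\eps)}$, and this is uniformly bounded by Lemma~\ref{lemma:coercivity}. The $\tau_U$-convergence then gives $\uu_\eps(\pm\nu,\cdot)\to\uu(\pm\nu,\cdot)$ strongly in $\mathrm{L}^2(\GC)$ by compact trace, and \eqref{trivial-but-useful} gives $\uu(\pm\nu,\cdot)\to\uu_\pm$. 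Letting first $n\to\infty$ and then $\nu\downarrow0$, we obtain $\uu_\eps(\pm\eps,\cdot)\to\uu_\pm$ in $\mathrm{L}^2(\GC)$, and hence the adhesive lower bound.

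\textbf{Brittle constraint (main obstacle).} We must show $\res z\JUMP{\uu}=0$ using the term $\tfrac\mu2\int z_\eps^2|\nabla\uu_\eps|^2$. On each fibre, Cauchy--Schwarz gives
\[
\Big(\int_{-\eps}^{\eps}|z_\eps\,\partial_{x_1}\uu_\eps|\dd x_1\Big)^2\le 2\eps\int_{-\eps}^{\eps} z_\eps^2|\partial_{x_1}\uu_\eps|^2\dd x_1 .
\]
Integrating over $\GC$ shows that $\int_{\GC}\int|z_\eps\partial_{x_1}\uu_\eps|\dd x_1\dd x'\le C\eps^{1/2}\,E^{1/2}\to0$. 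We then need to replace $z_\eps$ by its fibre average $\bar z_\eps(x')$, which converges to $\res z$. The fluctuation is controlled by
\[
\sup_{x_1}|z_\eps-\bar z_\eps|\le\int_{-\eps}^{\eps}|\partial_{x_1}z_\eps|\dd x_1 ,
\]
which in rescaled variables equals $\int_{-1}^1|\partial_{\res x_1}\res z_\eps|$. By \eqref{ci-salvano}, $\|\partial_{\res x_1}\res z_\eps\|_{\mathrm{L}^q}\le C\eps^{1-\rho/q}$, and $\res A^\eps$ gives $\|\partial_{\res x_1}\res z_\eps\|_{\mathrm{L}^2}\le C\eps$. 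We must multiply this by $\int|\partial_{x_1}\uu_\eps|$, which is only $O(1)$ in $\mathrm{L}^2(\GC)$ after using the $\eps$-part of the energy. So the error term is $\int_{\GC}\|\partial_{\res x_1}\res z_\eps(\cdot,x')\|_{\mathrm{L}^1}\,\eps^{-1/2}\big(\int|\partial_{x_1}\uu_\eps|^2\big)^{1/2}\dd x'$, with the last factor bounded in $\mathrm{L}^2(\GC)$ by $(E/\eps)^{1/2}$ times $\eps^{1/2}$. Hölder in $x'$ with exponent $\lambda$, and interpolation between the $\mathrm{L}^2$ and $\mathrm{L}^q$ bounds on the $\res z_\eps$-derivative, produce a rate $\eps^{a}$ with $a>0$ exactly when $\rho<2-4/\lambda$. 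This is where the hypothesis enters, and it is the step we expect to be most delicate.

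\textbf{Conclusion.} With this bound, $\bar z_\eps\,(\uu_\eps(\eps,\cdot)-\uu_\eps(-\eps,\cdot))\to0$ in $\mathrm{L}^1(\GC)$. Since $\bar z_\eps\to\res z$ strongly in $\mathrm{L}^2(\GC)$, by compactness from the $\mathrm{H}^1$ bound with $\res z\in[0,1]$, and the traces converge to $\JUMP{\uu}$, we get $\res z\JUMP{\uu}=0$ a.e. Combined with the adhesive lower bound, this yields $\res G(\uu,\res z)\le L_{\mathrm{bulk}}$.
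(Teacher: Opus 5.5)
Your argument is sound, and the $\frac\mu4$ you obtain in the jump term is the correct constant. Commit to it rather than promising to ``adjust accordingly''. With $\WD=\frac\mu2\gr^\eps(z)|F|^2$ on a layer of width $2\eps$, the constant $\frac\mu4$ is sharp, so the inequality as printed (with $\frac\mu2$ in $\res{G}$) is false. A counterexample:
take $\upphi=I_{[0,1]}$ and $\res{z}_\eps\equiv0$. Let $\uu_\eps$ coincide on $\Omega^\eps_\pm$ with a fixed $\uu$ (smooth on $\overline{\Omega}_\pm$, zero on $\GDir$, $\JUMP{\uu}\neq0$), and let it be affine in $x_1$ on $\OD^\eps$, interpolating $\uu(\pm\eps,\cdot)$. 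Then the energies are bounded, $\uu_\eps\TU\uu$, and $L_{\mathrm{bulk}}=\frac\mu4\int_{\GC}|\JUMP{\uu}|^2\dd x'<\res{G}(\uu,0)$.

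The paper reaches $\frac\mu2$ through a slip in the Jensen step of Lemma~\ref{l:liminf-crucial}. On a fibre of length $2\eps$ one only has $\int_{-\eps}^{\eps}|f|^2\dd r\ge\frac1{2\eps}\big|\int_{-\eps}^{\eps}f\dd r\big|^2$, so the prefactor is $\frac\mu{4\eps}$, not $\frac\mu{2\eps}$. Correspondingly, the recovery bound \eqref{Jump-3-2} is not sharp: since $G_1^\eps\to0$, splitting with weights $1{+}\delta$ and $1{+}\frac1\delta$ instead of $2,2$ gives $\frac\mu4$. The consistent limit energy therefore carries $\frac\mu4|\JUMP{\uu}|^2$.

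Beyond the constant, your route is genuinely different from the paper's and more elementary. The paper works with $\sqrt{\gr^\eps(z_\eps)}\,\uu_\eps$ in four steps:
\begin{enumerate}
\item It bounds the commutator $\Lambda_{1,2}$ via the $\eps^\rho$-weighted $\rmW^{1,q}$ estimate \eqref{ci-salvano}, which needs $\rho<1$.
\item It passes from two-sided to one-sided weights in Claim~1, which needs $\rho<2-\frac4\lambda$.
\item It swaps $(\uu_\eps,\res{z}_\eps)$ for $(\uu,\res{z})$ via Lemma~\ref{l:pedante}.
\item It reads off the brittle constraint from the boundedness of $\frac1\eps\int_{\GC}|\sqrt{\gr^\eps}\,\uu_\eps(\eps,\cdot)-\sqrt{\gr^\eps}\,\uu_\eps(-\eps,\cdot)|^2\dd x'$.
\end{enumerate}
You get the adhesive term from $\gr^\eps\ge\eps$ and Jensen alone. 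You get the brittle constraint by testing $\partial_{x_1}\uu_\eps$ against the fibre average $\bar z_\eps$. This uses only the bound on $\res{A}^\eps$, so for this proposition neither the $q$-term nor any restriction on $\rho$ is needed.

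Fix the fluctuation estimate, though. Cauchy--Schwarz on a fibre produces the factor $(2\eps)^{1/2}$, not $\eps^{-1/2}$. Set $D_\eps(x')=\int_{-1}^1|\partial_{\res{x}_1}\res{z}_\eps|\dd\res{x}_1$ and $Q_\eps(x')=\int_{-\eps}^{\eps}|\partial_{x_1}\uu_\eps|\dd x_1$. Then $\|D_\eps\|_{\mathrm{L}^2(\GC)}\le\sqrt2\|\partial_{\res{x}_1}\res{z}_\eps\|_{\mathrm{L}^2(\OD)}\le C\eps$ and $\|Q_\eps\|_{\mathrm{L}^2(\GC)}\le(2\eps)^{1/2}\|\partial_{x_1}\uu_\eps\|_{\mathrm{L}^2(\OD^\eps)}\le C$. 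Cauchy--Schwarz in $x'$ then bounds the error by $C\eps$. Drop the H\"older/interpolation step and the assertion that a positive rate appears ``exactly when $\rho<2-\frac4\lambda$''; both are unsupported and unnecessary.

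Two minor points. First, the uniform $\rmH^1(\Omega^\eps_\pm)$ bound follows directly from the energy bound and Poincar\'e; Lemma~\ref{lemma:coercivity} is stated for minimizers, which are not assumed here. Second, $\|\partial_{\res{x}_1}\res{z}_\eps\|_{\mathrm{L}^2(\OD)}\le C\eps$ yields $\partial_{\res{x}_1}\res{z}=0$. You need this to identify $\lim\bar z_\eps$ with the trace of $\res{z}$ on $\GC$.
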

 
  


 The proof of Proposition \ref{prop:limit-jump} relies on two intermediate lemmas, which we prove below.
   \begin{lemma}
  \label{l:pedante}
  Assume convergences \eqref{convergences-u/z}. Then,
  \begin{align}
  &
\label{final-conclusion}
 \| \uu_\eps({\pm}\eps,\cdot) - \uu({\pm}\eps,\cdot)  \|_{\mathrm{L}^ r(\GC)}   \longrightarrow 0 \text{ for all } 1 \leq  r \EEE <4\,.
\\
&
  \label{toshow}
  \begin{aligned}
  \lim_{\eps \down 0}   \int_{\GC} \Big[ | &\sqrt{ \gr^\eps(\res z(1,x')) } \uu(\eps,x'){-} \sqrt{\gr^\eps( \res z({-}1,x')) }  \uu({-}\eps,x') |^2 
    \\
  & \quad 
  -  |\sqrt{ \gr^\eps(\res{z}_\eps(1,x') )} \uu_\eps(\eps,x')
  {-} \sqrt{ \gr^\eps(\res{z}_\eps({-}1,x') )}  \uu_\eps({-}\eps,x') |^2 \Big] \dd x'    =0\,.
  \end{aligned}
  \end{align}
  \end{lemma}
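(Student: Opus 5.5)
For \eqref{final-conclusion}, I would exploit that the convergence $\TU$ controls $\uu_\eps$ only away from the layer, and compare the traces at $\pm\eps$ with traces at a fixed distance $\pm\nu$. Fix $\nu\in(0,\eta]$ and, for $\eps<\nu$, split
\[
\uu_\eps(\eps,\cdot)-\uu(\eps,\cdot) = [\uu_\eps(\eps,\cdot)-\uu_\eps(\nu,\cdot)] + [\uu_\eps(\nu,\cdot)-\uu(\nu,\cdot)] + [\uu(\nu,\cdot)-\uu(\eps,\cdot)].
\]
The middle term tends to $0$ in $\mathrm{L}^r(\GC)$ for $r<2^*_{\partial}$ (in particular for $r<4$ when $d\le 3$; in general I would argue in $\mathrm{L}^2$ and interpolate using the $\mathrm{L}^{2^*}$ bound \eqref{bounded} on slices), by compactness of the trace $\mathrm{H}^1(\Omega_+^{\nu})\to \mathrm{L}^r(\Gamma_{\nu})$ and $\uu_\eps\weakto\uu$ in $\mathrm{H}^1(\Omega_+^{\nu/2})$. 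The third term is bounded via \eqref{trivial-but-useful} (applied on $(\eps,\nu)$) by $\nu^{1/2}\|\uu\|_{\mathrm{H}^1(\Omega_+)}$. For the first term I would use the same fundamental-theorem-of-calculus argument as in the proof of \eqref{trivial-but-useful}, now on $(\eps,\nu)\times\GC\subset\Omega_+^{\eps}$. This gives $\nu^{1/2}\|\nabla\uu_\eps\|_{\mathrm{L}^2(\Omega_+^\eps)}$, which is $\le C\nu^{1/2}$ by Lemma \ref{lemma:coercivity} with the choice $\nu=\eps$ there, uniformly in $\eps$. Taking $\limsup_\eps$ and then $\nu\downarrow0$ gives the claim in $\mathrm{L}^2$. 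To reach $r<4$, I would interpolate between $\mathrm{L}^2$ and a uniform $\mathrm{L}^{4}$-type (or $\mathrm{L}^{2(d-1)/(d-2)}$) trace bound, which holds uniformly by the trace embedding on $\Omega_\pm^\eps$ together with the uniform $\mathrm{H}^1(\Omega_\pm^\eps)$ bound. The main technical care is that the trace constants on $\Omega_+^\eps$ are uniform in $\eps$, which is fine for cuboids of length $\ge L-\eta$.

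For \eqref{toshow}, I would write $a_\eps^\pm=\sqrt{\gr^\eps(\res z(\pm1,\cdot))}$, $b_\eps^\pm=\sqrt{\gr^\eps(\res z_\eps(\pm1,\cdot))}$, $U^\pm=\uu(\pm\eps,\cdot)$ and $V^\pm=\uu_\eps(\pm\eps,\cdot)$, and use $|A|^2-|B|^2=(A-B)\cdot(A+B)$. By Cauchy--Schwarz it then suffices to show $A-B\to0$ in $\mathrm{L}^2(\GC)$ while $A+B$ stays bounded in $\mathrm{L}^2(\GC)$. Since $\res z,\res z_\eps\in[0,1]$, the weights are uniformly bounded, and $U^\pm,V^\pm$ are bounded in $\mathrm{L}^2$ by the previous step. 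For the difference I would split
\[
a^\pm U^\pm-b^\pm V^\pm=(a^\pm-b^\pm)U^\pm + b^\pm(U^\pm-V^\pm).
\]
The second piece vanishes by \eqref{final-conclusion} with $r=2$.

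The first piece is the main obstacle. It needs $\res z_\eps(\pm1,\cdot)\to\res z(\pm1,\cdot)$ in a suitable $\mathrm{L}^s(\GC)$, paired via Hölder with $U^\pm$ in $\mathrm{L}^{r}$ for some $r<4$; this is presumably why $r<4$ is stated. Weak $\mathrm{H}^1(\OD)$ convergence yields compact traces, so $\res z_\eps(\pm1,\cdot)\to \res z(\pm1,\cdot)$ strongly in $\mathrm{L}^2(\GC)$, and by boundedness in $[0,1]$ also in every $\mathrm{L}^s$, $s<\infty$. Since $|\sqrt{z^2+\eps}-\sqrt{w^2+\eps}|\le|z-w|$, we get $a^\pm-b^\pm\to0$ in $\mathrm{L}^s$ for every $s<\infty$. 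Choosing $s$ with $\tfrac1s+\tfrac1r=\tfrac12$ and $r\in(2,4)$ then gives $(a^\pm-b^\pm)U^\pm\to0$ in $\mathrm{L}^2$, provided $U^\pm$ is bounded in $\mathrm{L}^r(\GC)$ uniformly in $\eps$. That bound holds by the uniform trace inequality for $\uu\in\mathrm{H}^1(\Omega_\pm)$ on the slices $\{\pm\eps\}\times\GC$ (at least for $d\le3$; otherwise one uses $r=2$ and boundedness of $a-b$ plus dominated convergence along a.e.-convergent subsequences).
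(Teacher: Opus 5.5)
Your proposal is correct and follows the paper's proof essentially step by step. For \eqref{final-conclusion} you use the same splitting through a fixed slice $\{\pm\nu\}\times\GC$, the same fundamental-theorem-of-calculus bounds for $\uu_\eps$ and $\uu$, compact traces on that slice, and interpolation against a uniform $\mathrm{L}^4$ trace bound. For \eqref{toshow} you use the same difference-of-squares/Cauchy--Schwarz argument with the splitting $(a-b)U+b(U-V)$.

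Your deviations are minor refinements. Bounding the weights by $\sqrt{1+\eps}$ in $\mathrm{L}^\infty$ means you only need $\mathrm{L}^2$ convergence of $U^\pm-V^\pm$, whereas the paper pairs $\mathrm{L}^6$ with $\mathrm{L}^3$. You rightly note that $r<4$ needs $d\le 3$, which the paper tacitly assumes as well. However, the bulk $\mathrm{L}^{2^*}$ bound \eqref{bounded} gives no control on slices, so that parenthetical should be dropped. For $d\ge 4$, your dominated-convergence fallback works once you invoke $\uu(\pm\eps,\cdot)\to\uu_\pm$ strongly in $\mathrm{L}^2(\GC)$, which follows from \eqref{trivial-but-useful}; this makes the $\eps$-dependent weights $|\uu(\pm\eps,\cdot)|^2$ uniformly integrable.
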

  \begin{proof}
  First of all, observe that the functions $\uu({\pm}\eps, \cdot)$, $\uu_\eps ({\pm}\eps, \cdot)$ are well defined as traces
  of $\uu$ and $\uu_\eps$ on the surfaces $\{ {\pm} \eps \} {\times} \GC$. 
  Now, we observe that for almost all $x'\in \GC$ and for \emph{fixed} $\nu\in (0,\eta]$ with $\nu >\eps \down 0$ we have 
  \[
 \uu_\eps(\eps,x') - \uu(\eps,x')  = \uu_\eps(\eps,x') - \uu_\eps(\nu,x') +  \uu_\eps(\nu,x') -  \uu(\nu,x') +    \uu(\nu,x')-  \uu(\eps,x')  \doteq \mu_\eps^1 +\mu_\eps^2 +\mu_\eps^3\,.
  \]
  Now,
 \[
  \mu_\eps^1  = -  \int_{\eps}^{\nu} \partial_{x_1} \uu_\eps (r,x') \dd r
 \]
  Thus,
  \begin{equation}
  \label{miraculous}
  \begin{aligned}
  \|   \mu_\eps^1\|_{\mathrm{L}^2(\GC)}^2 = \int_{\GC} \left|  \int_{\eps}^{\nu} \partial_{x_1} \uu_\eps (r,x') \dd r\right|^2 \dd x' 
 &  \leq (\nu{-}\eps) \iint_{(\eps,\nu){\times}\GC}  |\partial_{x_1} \uu_\eps (r,x') |^2 \dd x' \dd r 
 \\
& \leq  \nu \| \uu_\eps\|_{\mathrm{H}^1(\Omega_\eps^+)}^2 \leq C \nu\,,
  \end{aligned}
  \end{equation}
 where the very last bound follows from \eqref{uniform-coercivity-properties}.
  In the same way, we prove that 
  $
   \|   \mu_\eps^3\|_{\mathrm{L}^2(\GC)}^2  \leq C \nu\,.
  $
Finally, taking into account that, by the definition of   $\TU$ convergence, $\uu_\eps \weakto \uu $ in $\mathrm{H}^1(\Omega_\nu^+)$, by  well-known trace theorems we have that 
that the traces of $\uu_\eps$ on the surface $\{ \nu\}{\times}\GC$ converge to the trace of $\uu$ strongly in $\mathrm{L}^{r}(\GC) $ for all $1\leq r <4$. Therefore, we have that 
\[
 \|   \mu_\eps^2\|_{\mathrm{L}^r(\GC)}\ \longrightarrow 0 \text{ for all } 1 \leq  r \EEE <4\,.
\]
All in all, we have proved that 
\[
\exists\, C>0 \ \ \forall\, \nu \in (0,\eta]\, : \qquad 
\lim_{\eps \down0} \|  \uu_\eps(\eps,\cdot) - \uu(\eps,\cdot) \|_{\mathrm{L}^2(\GC)}^2  \leq 2 C\nu\,.
\]
By the arbitrariness of $\nu$, we ultimately conclude that $ \uu_\eps(\eps,\cdot) - \uu(\eps,\cdot) \to 0$ strongly in $\mathrm{L}^2(\GC)$. On the other hand, the sequence 
$(\uu_\eps(\eps,\cdot) - \uu(\eps,\cdot) )_\eps$ is bounded in $\mathrm{L}^4(\GC)$. Ultimately, we have \eqref{final-conclusion}.

Clearly, the very same convergence holds for $(\uu_\eps({-}\eps,\cdot) - \uu({-}\eps,\cdot) )_\eps$.
\par
Now, let $L$ be the limit in \eqref{toshow}. In what follows, we will use  the place-holders
$\plc_\eps^\pm(\cdot) : = \sqrt{\gr^\eps( \res{z}_\eps({\pm}1,\cdot)) }$, 
$\plcc_\eps^\pm(\cdot) : = \sqrt{ \gr^\eps(\res{z}({\pm}1,\cdot) )}$,  \EEE
$U_\eps^\pm (\cdot) = \uu_\eps({\pm}\eps,x')$, 
$V_\eps^\pm (\cdot) = \uu({\pm}\eps,x')$.  We estimate
\[
\begin{aligned}
|L| &  \leq \lim_{\eps \down 0} \int_{\GC}  |\plcc_\eps^+ V_\eps^+ {-} \plcc_\eps^- V_\eps^-{+} \plc_\eps^+
 U_\eps^+ {-} \plc_\eps^- U_\eps^- | \cdot \left(  |\plcc_\eps^+ V_\eps^+ {-} \plc_\eps^+ U_\eps^+ 
| {+}  |\plcc_\eps^- V_\eps^- {-} \plc_\eps^- U_\eps^- | \right) \dd x  
\\ & \leq \lim_{\eps \down 0} 
\| \plcc_\eps^+ V_\eps^+ {-} \plcc_\eps^- V_\eps^-{+} \plc_\eps^+ U_\eps^+ {-} \plc_\eps^- U_\eps^- \|_{\mathrm{L}^2(\GC)}
\cdot \left( \|\plcc_\eps^+ V_\eps^+ {-} \plc_\eps^+ U_\eps^+ \|_{\mathrm{L}^2(\GC)}{+} \| \plcc_\eps^- V_\eps^- {-} 
\plc_\eps^- U_\eps^-  \|_{\mathrm{L}^2(\GC)}\right)
\\
&
\doteq  \lim_{\eps \down 0}  M_1^\eps (M_{2,+}^\eps{+} M_{2,-}^\eps)
\,.
\end{aligned}
\]
Now, in view of \eqref{uniform-coercivity-properties} and of the bound $(\res{z}_\eps)_\eps \subset [0,1]$, it is immediate to deduce that there exits $C>0$ such that
$
M_1^\eps \leq C\,.
$
On the other hand, we estimate 
\[
M_{2,+}^\eps \leq  \| \plcc_\eps^+ {-} \plc_\eps^+\|_{\mathrm{L}^4(\GC)}^2 \| V_\eps^{+}\|_{\mathrm{L}^4(\GC)}^2 + \| \plc_\eps^+\|_{\mathrm{L}^6(\GC)}^2 
\| V_\eps^{+} {-} U_\eps^+\|_{\mathrm{L}^3(\GC)}^2 \EEE 
\]
and we deduce that $M_{2,+}^\eps \to 0$ as $\eps \down 0$, thanks to \eqref{convergences-u/z}, \eqref{final-conclusion} and to well-known trace theorems. Analogously, we show that $M_{2,-}^\eps \to 0$ as $\eps \down 0$. All in all, we have proved that $L=0$. This concludes the proof.
  \end{proof}

  \begin{lemma}
  \label{l:liminf-crucial} 
  Under the above conditions, we have  
  \begin{equation}
  \label{liminf-crucial}
 L_{\mathrm{surf}}:=  \liminf_{\eps\down 0} \int_{\GC}   \frac{\mu}{2\eps} \EEE  \gr^\eps(\res{z}_\eps(1,x')) |\uu_\eps(\eps,x'){-}\uu_\eps({-}\eps,x') |^2 \dd x'  \leq  L_{\mathrm{bulk}}= \liminf_{\eps 
\down 0}  \int_{\OD^{\eps}}    \WD(\res{z}_{\eps} {\circ} \mathrm{T}_{\eps}^{-1},\nabla\uu_{\eps}) \dd x
  \end{equation}
  and 
  \begin{equation}
  \label{true-lsurf}
 L_{\mathrm{surf}}=  \liminf_{\eps\down 0}  \int_{\GC}    \frac{\mu}{2\eps} \EEE  |\sqrt{\gr^\eps(\res{z}_\eps(1,x'))}\uu_\eps(\eps,x'){-}\sqrt{\gr^\eps(\res{z}_\eps(-1,x'))}\uu_\eps({-}\eps,x') |^2 \dd x' 
  \end{equation}
  \end{lemma}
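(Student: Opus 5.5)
The plan for \eqref{liminf-crucial} is to integrate along the thin direction. Write points of $\OD^\eps$ as $(x_1,x')$ with $x_1\in(-\eps,\eps)$, and set $\zeta_\eps(x_1,x'):=\res{z}_\eps(x_1/\eps,x')$. Since $\nabla\uu_\eps{:}\nabla\uu_\eps\geq|\partial_{x_1}\uu_\eps|^2$, we have
\[
\int_{\OD^{\eps}}\WD(\res{z}_{\eps}{\circ}\mathrm{T}_{\eps}^{-1},\nabla\uu_{\eps})\dd x\;\geq\;\int_{\GC}\int_{-\eps}^{\eps}\tfrac{\mu}{2}\gr^\eps(\zeta_\eps(x_1,x'))\,|\partial_{x_1}\uu_\eps(x_1,x')|^2\dd x_1\dd x'.
\]
For a.a.\ $x'$, Cauchy--Schwarz applied to $\uu_\eps(\eps,x')-\uu_\eps(-\eps,x')=\int_{-\eps}^\eps\partial_{x_1}\uu_\eps\dd x_1$ gives
\[
|\uu_\eps(\eps,x')-\uu_\eps(-\eps,x')|^2\leq\Big(\int_{-\eps}^{\eps}\gr^\eps(\zeta_\eps)|\partial_{x_1}\uu_\eps|^2\dd x_1\Big)\,\eps\!\int_{-1}^{1}\frac{\dd s}{\gr^\eps(\res{z}_\eps(s,x'))}.
\]
Multiplying by $\frac{\mu}{2\eps}\gr^\eps(\res{z}_\eps(1,x'))$, the integrand of $L_{\mathrm{surf}}$ is bounded by the fibrewise bulk energy times the weight
\[
\Theta_\eps(x'):=\tfrac12\int_{-1}^1\frac{\gr^\eps(\res{z}_\eps(1,x'))}{\gr^\eps(\res{z}_\eps(s,x'))}\dd s .
\]
The lemma therefore reduces to showing that $\Theta_\eps\to1$ in a sense strong enough to pass to the $\liminf$. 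Two inputs are available for this:
\begin{itemize}
\item since $\res{z}_\eps\in[0,1]$ and $\gr^\eps(z)=z^2+\eps$, we have $|\gr^\eps(a)-\gr^\eps(b)|\leq 2|a-b|$;
\item $\sup_s|\res{z}_\eps(s,x')-\res{z}_\eps(1,x')|\leq\int_{-1}^1|\partial_{\res{x}_1}\res{z}_\eps|\dd s=:\eps\,w_\eps(x')$.
\end{itemize}
By Lemma \ref{lemma:coercivity} and \eqref{ci-salvano}, $w_\eps$ is bounded in $\mathrm{L}^2(\GC)$ and $\eps^{\rho/q}w_\eps$ is bounded in $\mathrm{L}^q(\GC)$.

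This is the step I expect to be the main obstacle. Combining the two inputs with $\gr^\eps\geq\eps$ only yields $\Theta_\eps\leq1+2w_\eps$, which is not small. I would therefore not estimate the ratio pointwise. Instead I would keep the error term in product form,
\[
\int_{\GC}\int_{-\eps}^\eps|\gr^\eps(\res{z}_\eps(1,\cdot))-\gr^\eps(\zeta_\eps)|\,|\partial_{x_1}\uu_\eps|^2 ,
\]
and bound it by $2\eps\int_{\GC}w_\eps\int_{-\eps}^{\eps}|\partial_{x_1}\uu_\eps|^2$. Here I would use $\int_{-\eps}^{\eps}|\partial_{x_1}\uu_\eps|^2\leq\eps^{-1}\int_{-\eps}^{\eps}\gr^\eps(\zeta_\eps)|\partial_{x_1}\uu_\eps|^2$ only on the set where $w_\eps$ is large, namely $\{w_\eps>\eps^{-\beta}\}$; on the complement the error carries the factor $\eps^{1-\beta}\cdot\eps^{-1}$ against the bulk energy. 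The measure of the large set is estimated via the $\mathrm{L}^\lambda$ bound, $|\{w_\eps>\eps^{-\beta}\}|\leq C\eps^{\beta\lambda-\rho\lambda/q}$. On the large set, the jump contributions are controlled by Hölder with the $\mathrm{L}^4$-bounds of the traces $\uu_\eps(\pm\eps,\cdot)$ from \eqref{final-conclusion} and trace theorems, which gives the factor $|\cdot|^{1/2}$ times $\eps^{-1}$. Choosing $\beta$ so that both exponents are positive is exactly what the hypothesis $\rho<2-\tfrac4\lambda$ should permit. I would first check this exponent bookkeeping with $\lambda=\min\{4,q\}$, working with $\mathrm{L}^{\lambda}$ rather than $\mathrm{L}^q$ so as to stay compatible with the $\mathrm{L}^4$ trace bounds. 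Once the error vanishes, taking $\liminf$ yields \eqref{liminf-crucial}.

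For \eqref{true-lsurf}, I would expand
\[
\sqrt{\gr^\eps(\res{z}_\eps(1))}\,\uu_\eps(\eps)-\sqrt{\gr^\eps(\res{z}_\eps(-1))}\,\uu_\eps(-\eps)=\sqrt{\gr^\eps(\res{z}_\eps(1))}\,\big(\uu_\eps(\eps)-\uu_\eps(-\eps)\big)+\delta_\eps\,\uu_\eps(-\eps),
\]
where $\delta_\eps:=\sqrt{\gr^\eps(\res{z}_\eps(1))}-\sqrt{\gr^\eps(\res{z}_\eps(-1))}$. The aim is to show $\eps^{-1}\|\delta_\eps\,\uu_\eps(-\eps,\cdot)\|_{\mathrm{L}^2(\GC)}^2\to0$; since the first term is bounded by $L_{\mathrm{surf}}$-type quantities, the two $\liminf$s then coincide by the elementary inequality $|\,|a+b|^2-|a|^2|\leq 2|a||b|+|b|^2$. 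I would bound $|\delta_\eps|\leq\min\{\sqrt{2\eps w_\eps},\,\eps w_\eps/\sqrt\eps\}$, using $|\sqrt A-\sqrt B|\leq|A-B|/(\sqrt A+\sqrt B)$ with $A,B\geq\eps$. This gives $\eps^{-1}|\delta_\eps|^2\leq C\min\{w_\eps,w_\eps^2\}$, which is again only bounded, not small. So the same splitting as above is needed: on $\{w_\eps\leq\eps^{-\beta}\}$ I would exploit the additional smallness coming from $\eps^{\rho/q}w_\eps\in\mathrm{L}^q$ together with the $\mathrm{L}^4$ trace bound of $\uu_\eps(-\eps,\cdot)$, and on the complement its small measure, under the same condition $\rho<2-\tfrac4\lambda$. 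If this splitting turns out insufficient, the fallback is to transfer the argument to the limit, using \eqref{toshow} from Lemma \ref{l:pedante} to replace $(\res{z}_\eps,\uu_\eps)$ by $(\res z,\uu)$. There the convergence $\res{z}_\eps\to\res{z}$ together with $\partial_{\res{x}_1}\res z=0$ makes $\res z(1,\cdot)=\res z(-1,\cdot)$, so that $\delta$ is controlled by $\|\res{z}_\eps(\pm1)-\res z(\pm1)\|$ in trace norms.
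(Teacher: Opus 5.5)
Your reduction step is off by a factor $2$, and no argument can repair it, because \eqref{liminf-crucial} as stated is false. Cauchy--Schwarz on $(-\eps,\eps)$ gives
\[
\frac{\mu}{2\eps}\gr^\eps(\res{z}_\eps(1,x'))\,|\uu_\eps(\eps,x')-\uu_\eps(-\eps,x')|^2\leq\Big(\int_{-1}^{1}\frac{\gr^\eps(\res{z}_\eps(1,x'))}{\gr^\eps(\res{z}_\eps(s,x'))}\dd s\Big)\int_{-\eps}^{\eps}\frac{\mu}{2}\gr^\eps(\zeta_\eps)\,|\partial_{x_1}\uu_\eps|^2\dd x_1 .
\]
So the weight is $2\Theta_\eps$, which tends to $2$ wherever $\res{z}_\eps$ is nearly constant in $s$. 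Here is a counterexample:
\begin{itemize}
\item take $\res{z}_\eps\equiv0$ (admissible, e.g.\ for $\upphi=I_{[0,1]}$);
\item fix $\uu\in\rmH^1_{\GDir}(\Omega_-\cup\Omega_+;\R^d)$ that vanishes on $\Omega_-$ and equals a constant $\boldsymbol{a}\neq\boldsymbol{0}$ on $(0,L/2)\times\GC$;
\item let $\uu_\eps$ coincide with $\uu$ on $\Omega_\pm^\eps$ and be affine in $x_1$ on $\OD^\eps$.
\end{itemize}
All hypotheses hold and $\int_{\OD^\eps}\WD(\res{z}_\eps\circ\mathrm{T}_\eps^{-1},\nabla\uu_\eps)\dd x=\frac{\mu}{4}|\boldsymbol{a}|^2|\GC|$. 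However, the integral defining $L_{\mathrm{surf}}$ equals $\frac{\mu}{2}|\boldsymbol{a}|^2|\GC|$. The paper's proof has the matching slip in its Jensen step: on an interval of length $2\eps$, Jensen yields $\frac{\mu}{4\eps}$, not $\frac{\mu}{2\eps}$. What is provable is \eqref{liminf-crucial} with $\frac{\mu}{4\eps}$ in $L_{\mathrm{surf}}$ (hence $\frac{\mu}{4}|\JUMP{\uu}|^2$ downstream). With that normalization your $\Theta_\eps$ is exactly the right weight, and \eqref{true-lsurf}, being homogeneous, is unaffected.

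Even for the corrected target, your error control does not close. You use only $|\gr^\eps(a)-\gr^\eps(b)|\leq2|a-b|$ and $\gr^\eps\geq\eps$.
\begin{itemize}
\item On $\{w_\eps\leq\eps^{-\beta}\}$ the error is $\eps^{1-\beta}\cdot\eps^{-1}=\eps^{-\beta}$ times the bulk energy, which diverges.
\item On $S_\eps=\{w_\eps>\eps^{-\beta}\}$ the trace--H\"older bound $\eps^{-1}|S_\eps|^{1/2}$ requires $|S_\eps|=o(\eps^2)$. With your $\mathrm{L}^\lambda$ bound this means $\beta>\frac{2}{\lambda}+\frac{\rho}{q}>\frac12$.
\item Yet even the sharp pointwise ratio bound $(1+\sqrt{\eps}\,w_\eps)^2$ needs $\beta<\frac12$ on the complement. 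With the $\mathrm{L}^q$ bound instead you would need $q>4+2\rho$.
\end{itemize}

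The missing idea is that $\sqrt{\gr^\eps}$ is $1$-Lipschitz, since $|(\gr^\eps)'|\leq2\sqrt{\gr^\eps}$ (this is the fact behind both Claims in the paper's proof). It must be combined with a relative splitting. Put $o_\eps(x'):=\sup_s|\res{z}_\eps(s,x')-\res{z}_\eps(1,x')|\leq\min\{1,\eps w_\eps(x')\}$ and $m_\eps(x'):=\min_s\gr^\eps(\res{z}_\eps(s,x'))$. Then $\sqrt{\gr^\eps(\res{z}_\eps(1,\cdot))}\leq\sqrt{\gr^\eps(\res{z}_\eps(s,\cdot))}+o_\eps$ for every $s$.
\begin{itemize}
\item On $\{o_\eps\leq\tau\sqrt{m_\eps}\}$ this gives $\Theta_\eps\leq(1+\tau)^2$.
\item On the complement it gives $\gr^\eps(\res{z}_\eps(1,\cdot))\leq(1+\tau^{-1})^2o_\eps^2$. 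That part of the surface integral is therefore at most $C_\tau\eps^{-1}\|o_\eps\|_{\mathrm{L}^4(\GC)}^2\|\uu_\eps(\eps,\cdot)-\uu_\eps(-\eps,\cdot)\|_{\mathrm{L}^4(\GC)}^2$.
\item By \eqref{ci-salvano}, $\|o_\eps\|_{\mathrm{L}^4}^4\leq\|o_\eps\|_{\mathrm{L}^\lambda}^\lambda\leq C\eps^{\lambda(1-\rho/q)}$. So this term vanishes because $\frac{\lambda}{2}(1-\frac{\rho}{q})>1$, which follows from $\rho<2-\frac4\lambda$.
\end{itemize}
Then let $\tau\downarrow0$.

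For \eqref{true-lsurf}, the same Lipschitz bound gives $|\delta_\eps|\leq o_\eps$. Your bound $\min\{\sqrt{2\eps w_\eps},\sqrt{\eps}\,w_\eps\}$ is what makes the term look merely bounded. With $|\delta_\eps|\leq o_\eps$ you get $\eps^{-1}\|\delta_\eps\,\uu_\eps(-\eps,\cdot)\|_{\mathrm{L}^2(\GC)}^2\to0$, which is the paper's Claim~1. Your fallback via \eqref{toshow} cannot substitute for this, since \eqref{toshow} gives $o(1)$ where $o(\eps)$ is needed.

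Repaired this way, your weighted Cauchy--Schwarz route genuinely differs from the paper's, which applies the product rule to $\sqrt{\gr^\eps}\,\uu_\eps$ and then Jensen. Your route never produces the commutator $\Lambda_{1,2}$. Note that the paper's estimate of that term, in step (1) of Claim~2, drops the factor $\eps^{-1}$ in $\partial_{x_1}[\res{z}_\eps(x_1/\eps,x')]=\eps^{-1}(\partial_{\res{x}_1}\res{z}_\eps)(x_1/\eps,x')$.
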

  In the following proof,
        we will repeatedly use the following estimate
  \[
  (a{+}b)^2 \leq (1{+}\eta) a^2
+ \left(1{+}\frac1\eta \right) b^2 \qquad \text{for all } a,b\in \R \text{ and } \eta>0.
 \]
    \begin{proof}  We start from the right-hand side in \eqref{liminf-crucial}, and observe that, for every fixed $\eps>0$, 
  \begin{align}
  \nonumber
 \int_{\OD^{\eps}}   \WD(\res{z}_{\eps} {\circ} \mathrm{T}_{\eps}^{-1},\nabla\uu_{\eps}) \dd x & = 
 \int_{\OD^{\eps}}  
  \frac{\mu}2 \EEE 
 \gr^\eps( \res{z}_\eps 
\left(\tfrac1\eps r,x' \right) )   \sum_{i,j=1}^d |\partial_{x_i}\uu^j_\eps(r,x'))|^2  \dd r \dd x'\\
&
\label{7.3}
\geq \int_{\OD^{\eps}}   \frac{\mu}2 \EEE 
\gr^\eps( \res{z}_\eps 
\left(\tfrac1\eps r,x' \right) )   \sum_{i=1}^d  |\partial_{x_1}\uu^i_\eps(r,x'))|^2 \EEE   \dd r \dd x':=\Lambda_1.
\end{align}
In order to estimate $\Lambda_1$, we use that 
    \[
  \begin{aligned}
   \partial_{x_1} [\sqrt{ \gr^\eps(z_\eps(r,x')) } \uu_\eps(r,x')]   & = 
  \partial_{x_1} (\sqrt{ \gr^\eps(z_\eps(r,x'))} )  \uu_\eps(r,x') + \sqrt{ \gr^\eps(z_\eps(r,x'))}
\partial_{x_1}  \uu_\eps(r,x')
\\
& =   \partial_{x_1} (\sqrt{ \gr^\eps(\res{z}_\eps
\left(\tfrac1\eps r,x' \right)} )  \uu_\eps(r,x') + \sqrt{ \gr^\eps(\res{z}_\eps
\left(\tfrac1\eps r,x' \right)} )
\partial_{x_1}  \uu_\eps(r,x')\,.
\end{aligned}
  \]
  Therefore, for arbitrary $\eta>0$
  \[
   |  \partial_{x_1} [\sqrt{ \gr^\eps(z_\eps) } \uu_\eps]  |^2   \leq  (1{+}\eta) 
   \gr^\eps(\res{z}_\eps
\left(\tfrac1\eps r,x' \right))
   | \partial_{x_1}  \uu_\eps(r,x') |^2 + \left(1{+}\frac1\eta\right) | \partial_{x_1} [\sqrt{ \gr^\eps(\res{z}_\eps
\left(\tfrac1\eps r,x' \right)}) ]  \uu_\eps(r,x') |^2
  \]
  and thus we have 
  \begin{align}
  \nonumber
   \Lambda_1 & \geq \frac1{1{+}\eta}   \int_{\OD^{\eps}}   \frac\mu2 \EEE  |  \partial_{x_1} [\sqrt{ \gr^\eps(z_\eps) } \uu_\eps]  |^2 \dd x   -   \frac1\eta \EEE 
 \int_{\OD^{\eps}}   \frac\mu2 \EEE | \partial_{x_1} [\sqrt{ \gr^\eps(\res{z}_\eps)
\left(\tfrac1\eps r,x' \right)} ]  \uu_\eps(r,x') |^2
 \dd r \dd x'
\\
\label{7.4}
& \doteq  \frac1{1{+}\eta}  \Lambda_{1,1} -    \frac1\eta \EEE \Lambda_{1,2}     
  \end{align}
  \par
  Now,  by Jensen's inequality, we have 
\[
  \begin{aligned}
 \Lambda_{1,1}  & =   \int_{\GC}\int_{-\eps}^\eps      \frac\mu2 \EEE  |  \partial_{x_1} [\sqrt{ \gr^\eps(z_\eps(r,x')) } \uu_\eps(r,x')]  |^2 \dd r \dd x'  
 \\
& \geq  \int_{\GC}    \frac\mu{2\eps} \EEE   \left|\int_{-\eps}^{\eps}       \partial_{x_1} [\sqrt{ \gr^\eps(z_\eps(r,x')) } \uu_\eps(r,x')] \dd r  \right|^2 \dd x'  \,.
\end{aligned}
\]
  In turn, we observe that 
  \begin{align}
& 
\nonumber
 \int_{\GC}   
  \frac\mu{2\eps} \EEE  \left|\int_{-\eps}^{\eps}   \frac12    \partial_{x_1} (\sqrt{ \gr^\eps(z_\eps(r,x')) } \uu_\eps(r,x')) \dd r  \right|^2 \dd x' 
\\
\nonumber
& =
 \int_{\GC}     \frac\mu{2\eps} \EEE \ |\sqrt{ \gr^\eps(z_\eps(\eps,x')) } \uu_\eps(\eps,x'){-} \sqrt{ \gr^\eps(z_\eps({-}\eps,x')) }  \uu_\eps({-}\eps,x') |^2 \dd x'    
\\
& 
 \label{7.5}
 \stackrel{(1)} {=}  \EEE    
  \int_{\GC}      \frac\mu{2\eps} \EEE \ |\sqrt{ \gr^\eps(\res{z}_\eps(1,x')) } \uu_\eps(\eps,x'){-} \sqrt{ \gr^\eps(\res{z}_\eps({-}1,x')) }  \uu_\eps({-}\eps,x') |^2 \dd x'   \doteq \widehat{\Lambda}_{1,1}
\end{align}
  where  {\footnotesize (1)}  follows from the fact that 
  $ \res{z}_\eps({\pm}1,x')  = (\res{z}_\eps {\circ} T_\eps^{-1})({\pm}\eps,x') \doteq z_\eps ({\pm}\eps,x')$.
  \medskip
  
  \noindent
  \textbf{Claim $1$:} {\sl There holds}
  \begin{equation}
  \label{crucial4liminf}
  \begin{aligned}
 & \liminf_{\eps \down 0}   \int_{\GC}      \frac\mu{2\eps} \EEE \ |\sqrt{ \gr^\eps(\res{z}_\eps(1,x')) } \uu_\eps(\eps,x'){-} \sqrt{ \gr^\eps(\res{z}_\eps({-}1,x')) }  \uu_\eps({-}\eps,x') |^2 \dd x'  
 \\
 &  \geq   \liminf_{\eps \down 0}   \int_{\GC}      \frac\mu{2\eps} \EEE \ |\sqrt{ \gr^\eps(\res{z}_\eps(1,x')) } (\uu_\eps(\eps,x'){-}  \uu_\eps({-}\eps,x')) |^2 \dd x'  \,.
 \end{aligned}
  \end{equation}
  For this, we observe 
  \begin{equation}
  \label{e:develop-square}
  \begin{aligned}
  &
       \int_{\GC}   \frac\mu{2\eps} \EEE \ |\sqrt{ \gr^\eps(\res{z}_\eps(1,x')) } \uu_\eps(\eps,x'){-} \sqrt{ \gr^\eps(\res{z}_\eps({-}1,x')) }  \uu_\eps({-}\eps,x') |^2 \dd x'
      \\
      &
    =    \int_{\GC}     \frac\mu{2\eps} \EEE \ \Big|  \sqrt{ \gr^\eps(\res{z}_\eps(1,x')) }   [\uu_\eps(\eps,x'){-}  \uu_\eps({-}\eps,x') ]
    +[\sqrt{ \gr^\eps(\res{z}_\eps(1,x')) }{-} \sqrt{ \gr^\eps(\res{z}_\eps({-}1,x')) } ]\uu_\eps({-}\eps,x')\Big|^2 \dd x' \doteq I_1+I_2+I_3   
  \end{aligned}
  \end{equation}
  with  
  \[
  \begin{cases}
  I_1 =   \int_{\GC}     \frac\mu{2\eps} \EEE\ |  \sqrt{ \gr^\eps(\res{z}_\eps(1,x')) }   [\uu_\eps(\eps,x'){-}  \uu_\eps({-}\eps,x') ]|^2  \dd x',
  \smallskip
  \\
   I_2 = \int_{\GC}     \frac\mu{2\eps} \EEE \ | [\sqrt{ \gr^\eps(\res{z}_\eps(1,x')) }{-} \sqrt{ \gr^\eps(\res{z}_\eps({-}1,x')) } ]\uu_\eps({-}\eps,x')|^2 \dd x'  
   \end{cases}
   \]
   and $I_3$ the mixed term.
   With Young's inequality we estimate
      \[
      \begin{aligned}
   I_3  & =  \int_{\GC}   \frac\mu{\eps} \EEE  \sqrt{ \gr^\eps(\res{z}_\eps(1,x')) }   [\uu_\eps(\eps,x'){-}  \uu_\eps({-}\eps,x') ] [\sqrt{ \gr^\eps(\res{z}_\eps(1,x')) }{-} \sqrt{ \gr^\eps(\res{z}_\eps({-}1,x')) } ]\uu_\eps({-}\eps,x')  \dd x'  
   \\
   & 
   \geq
   - \beta I_1- \frac1\beta I_2     \qquad \text{for all } \beta>0 \,.
   \end{aligned}
   \]  
   All in all, we have 
   \begin{equation}
   \label{beta-arbitrary}
   \begin{aligned}
    \liminf_{\eps \down 0}  \Lambda_{1,1} \geq  &   
     (1{-}\beta)  \liminf_{\eps \down 0}  \int_{\GC}     \frac\mu{2\eps} \EEE \ |  \sqrt{ \gr^\eps(\res{z}_\eps(1,x')) }   [\uu_\eps(\eps,x'){-}  \uu_\eps({-}\eps,x') ]|^2  \dd x'
    \\
    &
 + \left(1{-}\frac1\beta\right)   \liminf_{\eps \down 0}  \int_{\GC}     \frac\mu{2\eps} \EEE \ | [\sqrt{ \gr^\eps(\res{z}_\eps(1,x')) }{-} \sqrt{ \gr^\eps(\res{z}_\eps({-}1,x')) } ]\uu_\eps({-}\eps,x')|^2 \dd x'  \,.
 \end{aligned}
   \end{equation}
We will now  \EEE 
 prove that
  \begin{equation}
  \label{difference20}
   \lim_{\eps \down 0}   \int_{\GC}      \frac\mu{2\eps} \EEE \ |\sqrt{ \gr^\eps(\res{z}_\eps(1,x')) }{-} \sqrt{ \gr^\eps(\res{z}_\eps({-}1,x')) } |^2 | \uu_\eps(-\eps,x') |^2 \dd x'  =0\,.
  \end{equation}
  Indeed,
  we calculate
  \begin{align}
  &
  \nonumber
 |\sqrt{ \gr^\eps(\res{z}_\eps(1,x')) }{-} \sqrt{ \gr^\eps(\res{z}_\eps({-}1,x')) } |^2 
   \\
   =
    &
      \nonumber
   \frac1{\displaystyle [\sqrt{ \gr^\eps(\res{z}_\eps(1,x')) }{+} \sqrt{ \gr^\eps(\res{z}_\eps({-}1,x')) } ]^2}  | \gr^\eps(\res{z}_\eps(1,x')) {-}  \gr^\eps(\res{z}_\eps({-}1,x')) |^2 
   \\
   \leq
    &
      \nonumber
 \frac1{\displaystyle [\res{z}_\eps(1,x') {+} \res{z}_\eps({-}1,x')]^2}  | \res{z}_\eps^2(1,x') {-}  \res{z}_\eps^2({-}1,x') |^2 
   \intertext{where we have used that $\sqrt{ \gr^\eps(\res{z}_\eps({\pm}1,x'))} \geq  \sqrt{\res{z}_\eps^2({\pm}1,x')} =\res{z}_\eps({\pm}1,x') $}
   = 
    &
      \nonumber
 | \res{z}_\eps(1,x') {-}  \res{z}_\eps({-}1,x') |^2 
  \end{align}
  Therefore,
  \begin{equation}
  \label{quoted-later4null-lim}
  \begin{aligned}
\int_{\GC}        \frac\mu{2\eps} \EEE  |\sqrt{ \gr^\eps(\res{z}_\eps(1,x')) }{-} \sqrt{ \gr^\eps(\res{z}_\eps({-}1,x')) } |^2 | \uu_\eps({-}\eps,x') |^2 \dd x'  
   &  \leq 
       \frac\mu{2\eps} \EEE \int_{\GC}    | \res{z}_\eps(1,x') {-}  \res{z}_\eps({-}1,x') |^2 | \uu_\eps({-}\eps,x') |^2 \dd x'  
 \\
  &  \leq 
          \frac\mu{2\eps} \EEE\| \res{z}_\eps(1,\cdot) {-}  \res{z}_\eps({-}1,\cdot) \|_{\mathrm{L}^4(\GC)}^2  \| \uu_\eps({-}\eps,\cdot)  \|_{\mathrm{L}^4(\GC)}^2
 \end{aligned}
  \end{equation}
 In order to estimate $  \| \res{z}_\eps(1,\cdot) {-}  \res{z}_\eps({-}1,\cdot) \|_{\mathrm{L}^4(\GC)}^2 $, we will use  the bound
  \eqref{ci-salvano}
   for
  $\frac1\eps \partial_{\res{x}_1}\res{z}_\eps$ in $\mathrm{L}^{q}(\OD)$. \EEE 
In the following calculation, we distinguish the cases $q<4$ and $q\geq 4$.
For $q<4$, we have 
\begin{align}
\nonumber
 \| \res{z}_\eps(1,\cdot) {-}  \res{z}_\eps({-}1,\cdot) \|_{\mathrm{L}^4(\GC)}^4 
& = \int_{\GC}
 | \res{z}_\eps(1,x') {-}  \res{z}_\eps({-}1,x') |^{4-q+q} \dd x' \leq 
 \\
 \nonumber
& \leq  2^{4-q}  \int_{\GC}
 | \res{z}_\eps(1,x') {-}  \res{z}_\eps({-}1,x') |^{q} \dd x' 
 \intertext{since $0\leq  \res{z}_\eps({\pm}1,\cdot) \leq 1 $ a.e.\ in $\GC$}
& \nonumber \leq   2^{4-q}    \int_{\GC} \left|\int_{-1}^1  \partial_{\res{x}_1} \res{z}_\eps(\res{x}_1,x')  \dd \res{x}_1 \right|^{q} \dd x' 
 \intertext{and, by Jensen's inequality, we have }
& \nonumber
  \leq 2^{4-q} 2^{q-1}   \int_{\GC} \int_{-1}^1 \left| \partial_{\res{x}_1} \res{z}_\eps(\res{x}_1,x') \right|^{q}   \dd \res{x}_1 \dd x' 
  = C \eps^{q} \left\| \frac1{\eps}  \partial_{\res{x}_1} \res{z}_\eps\right\|_{\mathrm{L}^{q}(\OD)}^{q} 
\end{align}
  In the case $q \geq 4$, we simply have 
\begin{align}
\nonumber
 \| \res{z}_\eps(1,\cdot) {-}  \res{z}_\eps({-}1,\cdot) \|_{\mathrm{L}^4(\GC)}^4 
& \leq C \left(  \| \res{z}_\eps(1,\cdot) {-}  \res{z}_\eps({-}1,\cdot) \|_{\mathrm{L}^{q}(\GC)}^{q}  \right)^{4/q}
 \\
 \nonumber
& 
  \leq C \left(   2^{q-1} \int_{\GC} \int_{-1}^1  \left|\partial_{\res{x}_1} \res{z}_\eps(\res{x}_1,x') \right|^q  \dd \res{x}_1 \dd x'  \right)^{4/q}
\\
& \nonumber
  = C \left(  \eps^{q} \left\| \frac1{\eps}  \partial_{\res{x}_1} \res{z}_\eps\right\|_{\mathrm{L}^{q}(\OD)}^{q}\right)^{4/q} 
  = C \eps^4 \left\| \frac1{\eps}  \partial_{\res{x}_1} \res{z}_\eps\right\|_{\mathrm{L}^{q}(\OD)}^4
\end{align}
All in all, we have shown that 
  \[
   \| \res{z}_\eps(1,\cdot) {-}  \res{z}_\eps({-}1,\cdot) \|_{\mathrm{L}^4(\GC)}^2  \leq C \eps^{\lambda/2} \left\| \frac1{\eps}  \partial_{\res{x}_1} \res{z}_\eps\right\|_{\mathrm{L}^{q}(\OD)}^{\lambda/2} \qquad \text{with } \lambda = \min\{4,q\} \,.
  \]\EEE
 We now recall  \eqref{ci-salvano} and conclude that 
   \begin{equation}
   \label{on-the-verge}
   \| \res{z}_\eps(1,\cdot) {-}  \res{z}_\eps({-}1,\cdot) \|_{\mathrm{L}^4(\GC)}^2 \leq C' \eps^{\tfrac\lambda2}  \eps^{-\tfrac\lambda4 \rho} = C' \eps^{\tfrac\lambda4 (2{-}\rho)}
   \end{equation}
We combine \eqref{on-the-verge} with  \eqref{quoted-later4null-lim}  and use that $  \| \uu_\eps({-}\eps,\cdot)  \|_{\mathrm{L}^4(\GC)}^2 \leq C$
as a consequence of the estimate for  $ \| \uu_\eps  \|^2_{ \mathrm{H}^1(\Omega^\eps_-)\EEE}$,  which in turn follows from the bound \eqref{uniform-coercivity-properties}. 
Therefore, 
    \[
    \int_{\GC}        \frac\mu{2\eps} \EEE  |\sqrt{ \gr^\eps(\res{z}_\eps(1,x')) }{-} \sqrt{ \gr^\eps(\res{z}_\eps({-}1,x')) } |^2 | \uu_\eps({-}\eps,x') |^2 \dd x'  
    \leq C\frac1\eps   \eps^{\tfrac\lambda4 (2{-}\rho)} \longrightarrow 0 
    \]
    since we have required $0<\rho< 2-\tfrac4\lambda$. 
    Thus, \eqref{difference20} follows. Letting $\beta \down 0$ in \eqref{beta-arbitrary}, we obtain the desired
    \eqref{crucial4liminf}. 

     Before proceeding, we observe that constraint $0<\rho< 2-\tfrac4\lambda$, with $\lambda = \min\{4, q\}$, in particular implies $0<\rho<1$. \EEE This latter bound will play a key role in what follows. \EEE

    \medskip
     \textbf{Claim $2$:} {\sl There holds}
  \begin{equation}
  \label{crucial4liminf-2}
  \begin{aligned}
 & \Lambda_{1,2} =
  \int_{\OD^{\eps}}  \frac\mu2 \EEE  | \partial_{x_1} [\sqrt{ \gr^\eps(\res{z}_\eps)
\left(\tfrac1\eps r,x' \right)} ]  \uu_\eps(r,x') |^2  \leq C \eps^{1-\rho} \longrightarrow 0
  \end{aligned}
  \end{equation} 
  {\sl  since $\rho \in ]0,1[$}.   
  \\
  Indeed, 
     we estimate 
   \begin{align}
   \nonumber
     \Lambda_{1,2}  &=    \frac\mu2 \EEE  \int_{\GC}\int_{-\eps}^\eps \left|
   \frac1{ 2\sqrt{ \gr^\eps( \res{z}_\eps(\tfrac1\eps r,x'))}} \partial_{\res{x}_1}\res{z}_\eps(\tfrac1\eps r,x') (\gr^\eps)'(\res{z}_\eps(\tfrac1\eps r,x'))
 \uu_\eps(r,x') \right|^2 \dd r \dd x'
 \\
 &
    \nonumber
  \stackrel{(1)} \leq  \frac\mu2 \EEE  \int_{\GC}\int_{-\eps}^\eps \left|
    \partial_{\res{x}_1}\res{z}_\eps(\tfrac1\eps r,x') 
 \uu_\eps(r,x') \right|^2 \dd r \dd x'
\intertext{where we have used in {\footnotesize (1)} that, for  $\gr^\eps(\zeta) = \zeta^2+\eps$, there holds $\left|\frac{(\gr^\eps)'(\zeta)}{\sqrt{\gr^\eps(\zeta)}} \right| \leq 2$}
 &
    \nonumber
=  \frac{\mu\eps^2}2 \EEE 
\int_{-\eps}^{\eps} \int_{\GC}   \left|\frac 1\eps\partial_{\res{x}_1} \res{z}_\eps \left(\tfrac1\eps r,x' \right) \right|^2
 | \uu_\eps(r,x')|^2 \dd r \dd x'
\\
& \stackrel{(2)}\leq    \frac{\mu\eps^2}2 \EEE    \| \uu_\eps \|_{\mathrm{L}^{2p'}(\OD^\eps)}^2  
\left(  \int_{-\eps}^{\eps}   \int_{\GC}   \left|\frac1\eps \partial_{\res{x}_1} \res{z}_\eps \left(\tfrac1\eps r,x' \right) \right|^{q} \dd r \dd x' \right)^{2/{q}}
\intertext{where in {\footnotesize (2)} we have used H\"older's inequality  with the exponent $p = \tfrac{q}2$ and 
 $p'= \tfrac{q}{q-2}$ \EEE}
&
    \nonumber
  \stackrel{(3)} \leq C  \frac{\eps^2}2 \| \uu_\eps \|_{\mathrm{H}^1(\OD^\eps)}^2   \eps^{-\rho}
  \intertext{in view of estimate \eqref{ci-salvano}, and taking into account that  $q>d$ entails $2p'\leq 2^*$, with $2^*$ the critical exponent for $\mathrm{H}^1(\OD^\eps)$.} 
      \nonumber
\end{align}
Now, in view of \eqref{uniform-coercivity-properties} we have  that 
\[
 \| \uu_\eps \|_{\mathrm{H}^1(\OD^\eps)}^2 \leq C\eps^{-1}\,.
\] 
Therefore, \eqref{crucial4liminf-2} follows.
 
 All in all, we have shown that, for arbitrary $\eta>0$
 \[
   \begin{aligned}
  &   \liminf_{\eps\down 0}  \int_{\OD^{\eps}}    \WD(\res{z}_{\eps} {\circ} \mathrm{T}_{\eps}^{-1},\nabla\uu_{\eps}) \dd x 
    \\
    & \geq 
     \liminf_{\eps\down 0}   \frac1{1+\eta}   \int_{\GC}     \frac{\mu}{2\eps} \EEE \ |\sqrt{ \gr^\eps(\res{z}_\eps(1,x'))} (\uu_\eps(\eps,x'){-}  \uu_\eps({-}\eps,x')) |^2 \dd x' 
       \\
  & \qquad    -  \liminf_{\eps\down 0}      \left(1{+}\frac1\eta\right) C\eps^{1-\rho}
     \end{aligned}
   \]   
   Choosing $\eta = \eps^{(1-\rho)/2}$, so that  the second term on the right-hand side  becomes
   \[
     -   \liminf_{\eps\down 0}   (1{+}\eps^{(1-\rho)/2}) C\eps^{(1-\rho)/2} =0, 
   \]
  we infer the thesis and conclude the proof of the lemma.
   \end{proof}

  \begin{remark}
  \label{rk:rho}
  \slshape
The calculations in the proof of the Lemma \ref{l:liminf-crucial} reveal that it is necessary to distinguish between the cases $q <4$ and $q \geq 4$. In order to handle both cases, the exponent $\rho$
appearing in the rescaling factor in \eqref{Eeps-3} must be chosen to satisfy $0<\rho< 2-\tfrac4\lambda$. 
Note that, if $ q\geq 4$, then the constraint on $\rho$ is simply $\rho\in ]0,1[$. If $q<4$ (which may occur only if $d\leq 3$),  then the constraint is meaningful only if $q>2$, which is always true.
We point out, in particular, that condition $0<\rho< 2-\tfrac4\lambda$ implies $0<\rho<1$, a requirement that plays a crucial role in the proof of Claim~2 in Lemma \ref{l:liminf-crucial}.  
  \end{remark}
  
 We are now in a position to prove  \underline{Proposition \ref{prop:limit-jump}}.
  \begin{proof}
  For notational simplicity, hereafter we shall write $\eps$ in place of $\eps_n$. 
  
  First of all, we show that  if $  L_{\mathrm{bulk}}<\infty$, then $\res{z} \JUMP{\uu}=0$ a.e.\ in $\GC$. For this, 
  we bring into play \eqref{liminf-crucial} and \eqref{true-lsurf} and   from Lemma \ref{l:liminf-crucial} and
deduce  that, up to a subsequence  there holds
  \[
  C\geq \sup_{\eps>0} \int_{\GC}  \frac\mu{2\eps} 
  \EEE \  |\sqrt{\gr^\eps(\res{z}_\eps(1,x'))}\uu_\eps(\eps,x'){-}\sqrt{\gr^\eps(\res{z}_\eps(-1,x'))}\uu_\eps({-}\eps,x') |^2 \dd x'   \,.
  \]
  Taking into account  \eqref{toshow} from 
 Lemma \ref{l:pedante}, up to a further subsequence we deduce that
  \begin{equation}
  \label{up2asubs}
\exists\, C'>0 \ \forall\, \eps>0\, : \qquad   C'\eps\geq \int_{\GC}   \gr^\eps(\res{z}(x'))|\uu(\eps,x'){-}\uu({-}\eps,x') |^2 \dd x'  \geq  \int_{\GC}   \res{z}^2(x')|\uu(\eps,x'){-}\uu({-}\eps,x') |^2 \dd x' \,.
  \end{equation}
 
Now,   recall that  for almost all $x'\in \GC$
  \[
  \JUMP{\uu}(x') = \uu_+(x') -  \uu_-(x') \qquad \text{with } \uu_\pm \text{ the traces of $\uu|_{\Omega_\pm}$ on $\GC$}. 
  \]
  It follows from \eqref{trivial-but-useful} that 
  \[
  \uu({\pm} \eps,\cdot) \longrightarrow \uu_\pm \quad \text{in } \mathrm{L}^2(\GC) \text{ as } \eps \downarrow 0.
  \]
  Therefore, from \eqref{up2asubs} we gather
  \[
  0=  \lim_{\eps\down 0} \int_{\GC}   \res{z}^2(x')|\uu(\eps,x'){-}\uu({-}\eps,x') |^2  \dd x' \geq  \int_{\GC}   \res{z}^2(x') |\JUMP{\uu}(x')|^2 \dd x' \,.
  \]
  
  It remains to show that 
  \[
   L_{\mathrm{bulk}} \geq  \int_{\GC}    |\JUMP{\uu}(x')|^2 \dd x'\,.
  \]
  For this, we again  to resort to Lemma \ref{l:liminf-crucial}: we have
  \[
     L_{\mathrm{bulk}}  \geq \liminf_{\eps \down 0} \int_{\GC}  \frac{\mu}{2\eps} \EEE  \gr^\eps(\res{z}_\eps(1,x')) |\uu_\eps(\eps,x'){-}\uu_\eps({-}\eps,x') |^2 \dd x'   \geq  \liminf_{\eps \down 0} \int_{\GC}   \frac\mu2 \EEE  (1+\res{z}_{\eps}^2(1,x'))|\uu_\eps(\eps,x'){-}\uu_\eps({-}\eps,x') |^2 \dd x'  
  \]
  where we have used that  $\frac1\eps \gr^\eps(\res{z}_\eps) \geq \res{z}^2_{\eps}+ 1$ a.e.\ in $\OD$. 
Recalling \eqref{final-conclusion} and taking into account that $\res{z} \JUMP{u} =0$ on $\GC$,  we  end up with 
  \[
    L_{\mathrm{bulk}}   \geq  \liminf_{\eps \down 0} \int_{\GC} 
     \frac\mu2 \EEE  (1+\res{z}^2_{\eps})|\uu(\eps,x'){-}\uu({-}\eps,x') |^2 \dd x'   \geq \int_{\GC}   \frac\mu2 \EEE   |\JUMP{\uu}(x')|^2 \dd x' ,
  \]
  as desired.

  \end{proof}
  
We conclude this section by proving condition \eqref{gamma-en}.

\begin{proposition}
\label{prop:proof-C4}
 Assume $0<\rho< 2-\tfrac4\lambda$,
 with $\lambda = \min\{4, q\}$. \EEE
Let $(t_n,\uu_n,\res{z}_n)_n, \, (t,\uu,\res{z}) \in [0,T]{\times}\resX$ be
such that 
\[
t_n \to t,\  (\uu_n, \res{z}_n) \weaktoX (\uu,\res{z}) \text{ in } \resX, \  (\uu_n, \res{z}_n) \in \res{S}_{\eps_n}(t_n) \ \forall\, n \in \N.
\]
Then,
 \begin{equation}
 \label{gamma-en-proved}
 \res{E}(t,\uu,\res{z}) \leq \liminf_{n\to\infty} \resen(t_n,\uu_n,\res{z}_n).
 \end{equation} 
\end{proposition}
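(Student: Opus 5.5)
We may assume $\liminf_{n}\resen(t_n,\uu_n,\res{z}_n)<\infty$; otherwise there is nothing to prove. Passing to a subsequence that realizes the liminf, we also have $\sup_n \resen(t_n,\uu_n,\res{z}_n)\le E$. The plan is to split $\resen(t_n,\uu_n,\res{z}_n)$ into four pieces and bound each from below separately:
\[
\int_{\Omega_-^{\eps_n}\cup\Omega_+^{\eps_n}} \rmW(\nabla\uu_n)\dd x,\qquad
\int_{\OD^{\eps_n}}\WD(\res{z}_n\circ\mathrm{T}_{\eps_n}^{-1},\nabla\uu_n)\dd x,\qquad
\res{F}_{\eps_n}(\res{z}_n),\qquad
-\langle\effe(t_n),\uu_n\rangle .
\]
Since the liminf of a sum dominates the sum of the liminfs, it suffices to show that each piece has liminf at least the corresponding term of $\res{E}(t,\uu,\res{z})$. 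Stability of $(\uu_n,\res{z}_n)$ at $t_n$, tested with $\hat{\res z}=\res{z}_n$, shows that $\uu_n$ minimizes $\res{Q}^{\eps_n}_{\mathrm{el}}(t_n,\cdot,\res{z}_n)$. Hence Lemma \ref{lemma:coercivity} applies, and the hypotheses of Proposition \ref{prop:limit-jump} are satisfied.

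The loading term converges. By \eqref{force}, $\effe(t)$ is supported away from $\bigcup_\eps\overline{\OD^\eps}$, so only $\uu_n$ on some $\Omega^\nu_\pm$ matters. There $\uu_n\weakto\uu$ weakly in $\mathrm{H}^1$, and $\effe(t_n)\to\effe(t)$ strongly in the dual by the $\mathrm{C}^1$ regularity. Together these give $\langle\effe(t_n),\uu_n\rangle\to\langle\effe(t),\uu\rangle$, just as in Lemma \ref{lemma:C1-C3}.

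For the bulk term, fix $\nu\in(0,\eta]$. Since $\rmW\ge -C_W$, we have
\[
\int_{\Omega_\pm^{\eps_n}}\rmW(\nabla\uu_n)\dd x\;\ge\;\int_{\Omega_\pm^{\nu}}\rmW(\nabla\uu_n)\dd x\;-\;C_W\,|\Omega^{\eps_n}_\pm\setminus\Omega^\nu_\pm|.
\]
The functional is convex, so it is weakly lsc in $\mathrm{H}^1(\Omega^\nu_\pm)$; this bounds the liminf of the first integral below by $\int_{\Omega^\nu_\pm}\rmW(\nabla\uu)$. The error term is at most $C\nu$. Letting $\nu\downarrow0$ by monotone convergence, after adding the constant $C_W$ to make the integrand nonnegative, yields $\int_{\Omega_-\cup\Omega_+}\rmW(\nabla\uu)\dd x$.

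The damage-layer elastic term is exactly Proposition \ref{prop:limit-jump}, whose liminf is at least $\res{G}(\uu,\res{z})$.

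For $\res F_{\eps_n}$, we drop the nonnegative term $\tfrac{\eps_n^\rho}{q}\res A^{\eps_n}_q$. We then use $\res A^{\eps_n}(\res z_n)\ge\int_{\OD}|\nabla_{x'}\res z_n|^2$. Lower semicontinuity of $\int_{\OD}|\nabla_{x'}\res z|^2$ under $\res z_n\weakto\res z$ in $\mathrm{H}^1(\OD)$ handles this part, and $\partial_{\res x_1}\res z=0$ identifies the limit with $\tfrac12\int_{\OD}|\nabla\res z|^2$. For $\int_{\OD}\upphi(\res z_n)$, compactness of $\mathrm{H}^1(\OD)\subset\mathrm{L}^1(\OD)$ gives $\res z_n\to\res z$ strongly in $\mathrm{L}^1$ and, up to a subsequence, a.e. Since $\upphi$ is convex and lsc and $\upphi\ge$ an affine function, Fatou's lemma gives $\liminf\int_{\OD}\upphi(\res z_n)\ge\int_{\OD}\upphi(\res z)$. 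Summing the four liminf inequalities then gives \eqref{gamma-en-proved}.

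The main obstacle is the interface term, but it is already fully handled by Proposition \ref{prop:limit-jump}, which rests on Lemma \ref{l:liminf-crucial} and the assumption on $\rho$. What remains is to check that its hypotheses hold: the energy bound and the minimality of $\uu_n$, both obtained above from stability.
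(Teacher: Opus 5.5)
Your proposal is correct and follows essentially the same route as the paper: split $\resen$ into the elastic part, with the damage-layer contribution handled by Proposition \ref{prop:limit-jump}, and the potential $\res{F}_{\eps_n}$, then take liminfs of each piece separately. You also supply details the paper leaves implicit: convergence of the loading term as $t_n\to t$, exhaustion of $\Omega_\pm$ by $\Omega_\pm^\nu$ for the bulk term, Fatou's lemma for $\upphi$, and minimality of $\uu_n$ obtained from stability, which justifies the coercivity bounds that Proposition \ref{prop:limit-jump} relies on.
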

  \begin{proof}
  Recalling \eqref{resE-eps}, \eqref{eq:resQep}, \eqref{En-lim-1}, and \eqref{En-lim-2} from \eqref{Tau_u_cvg} we directly infer
  \begin{equation}
  \label{eq:lscElastic}
  \int_{\Omega_- {\cup} \Omega_+ }  \rmW(\nabla \uu) \dd x +\res{G}(\uu,\res{z})  - \langle \effe(t), \uu \rangle_{\mathrm{H}^1(\Omega;\R^d)} \leq \liminf_{n\to\infty} \res{Q}_{\mathrm{el}}^{\eps_n}(t,\uu_n,\res{z}_n), 
  \end{equation}
  whereas property \eqref{upphi}, \eqref{En-lim-3}, the rescaling in \eqref{damageable-rescaled}--\eqref{damageable-rescaled-1/2}, and Proposition \ref{prop:limit-jump} entail
  \begin{equation}
\label{eq:lscZeta}
\res{F} (\res{z})\leq \liminf_{n\to\infty} \res{F}^{\eps_n} (\res{z}_n).
  \end{equation}
 The statement follows then by combining \eqref{eq:lscElastic} with \eqref{eq:lscZeta}.
  \end{proof}
%
  \section{Closedness of the stable sets}
  \label{s:6}
%
Recall the definition of the functionals $\calQ_\mathrm{el}^\eps,$ $\calF^\eps,$ $\calE_\eps$ from \eqref{Eeps}, 
 of their rescaled counterparts $\res{Q}_\mathrm{el}^\eps,$ $\res{F}^\eps,$ $\res{E}_\eps$ from 
 \eqref{damageable-rescaled}--\eqref{resZ-eps},  and of   $\res{F},$ $\res{E}$ from \eqref{En-lim}. 
For the discussion in this section we will use  the  notation 
  \[
  \begin{cases}
  \calQ_{\mathrm{el}}^{\gr^\eps} & \text{in place of }   \calQ_\mathrm{el}^\eps,
  \\
  \res{Q}_\mathrm{el}^{\gr^\eps} & \text{in place of }   \res{Q}_\mathrm{el}^\eps,
  \\
  \res{E}_{\gr^\eps} & \text{in place of }  \res{E}_\eps
  \end{cases}
  \]
 to explicitly highlight the dependence of the energy and the results on the choice of the function $\gr^\eps$. 
 We will also need to introduce the following energy functionals
 \EEE
\begin{subequations}
\label{Ens-Sec6}
\begin{align}
    \res{H}_{\gr^\eps}:  [0,T]\times \rmH_{\GDir}^1(\Omega_{-}{\cup}\Omega_+){\times}  \rmW^{1,r}(\OD) \EEE \to[0,\infty]\,,\;\;
    \res{H}_{\gr^\eps}(t,\uu,\res z)&:=
    {\res{Q}}_\mathrm{el}^{\gr^\eps}(t,\uu,\res z)
    +\res{J}_\eps(\res z)\;\;\text{ with }
\\
    \res{J}_\eps: \rmW^{1,r}(\OD)\EEE \to\R\,,\;\;\qquad
    \res{J}_\eps(\res z)&:=\int_{\Omega_\mathrm{D}}
    \left(  \tfrac{1}{r\eps} \EEE |\partial_{{\res r}_1}\res z|^r{+}  \tfrac{1}{r} \EEE |\nabla_{x'}\res z|^r{+}\upphi(\res z) \right) \,\mathrm{d} x'\,;
\end{align}
\end{subequations}
 Indeed, $ \res{H}_{\gr^\eps}$ and $\res{E}_{\gr^\eps}$  only differ via the contributions
$ \res{J}_\eps$ and 
 $\res{F}_\eps = \res{F}_\eps(\res z) =  \tfrac{\eps^\rho}q   \res{A}_{q}^\eps(\res{z})  +  \tfrac12\res{A}^\eps(\res{z}) 
+ \int_{\OD}  
\upphi (\res z)  \dd \res{x}$.  
 As we will see, 
it is helpful to  bring into the picture  functional involving  a general Sobolev gradient of integrability $r\in(1,\infty)$ for better comparison with the results from \cite{MiRoTh10DDNE} where $\gr^\eps(z)\geq \eps^\gamma$ with $\gamma\neq 1$. 
\par
Indeed, we start by recalling a  
%
%
%
%
 result on the existence of mutual recovery sequences  that  can be taken from  \cite{MiRoTh10DDNE} to hold  for the above setting. Note that, in the statement below, we will take the exponent $\gamma \in (1,\infty)$. Thus, 
the corresponding limit energy, to be compared with $\mathfrak{E}$ from \eqref{En-lim},   will be 
\begin{subequations}
\label{En-Marita}
\begin{equation}
 \res{H}(t,\uu, \res{z}) : =   \int_{\Omega_- {\cup} \Omega_+ }  \rmW(\nabla \uu) \dd x +\res{K}(\uu,\res{z}) +\res{J} (\res{z}) - \langle \effe(t), \uu \rangle_{\mathrm{H}^1(\Omega;\R^d)} 
\end{equation}
with  $\res{J} (\res{z})  = \int_{\Omega_\mathrm{D}}
(\tfrac{1}{r} \EEE |\nabla_{x'}\res z|^r{+}\upphi(\res z) ) \,\mathrm{d} x'$ 
and  the coupling energy
    \begin{equation}
 \label{En-lim-2-Marita}
 \res{K} :  \rmH_{\GDir}^1(\Omega_{-}{\cup}\Omega_+){\times}  \mathrm{L}^1(\OD) \to [0,+\infty), \qquad \res{K}(\uu, \res{z}): = \int_{\GC} \left (I_{\{\boldsymbol{0}\}}(\res{z} \JUMP{\uu})\right) \dd x'\,.
 \end{equation}
 \end{subequations}
In fact, we remind the reader that  in the setup of   \cite{MiRoTh10DDNE} only the brittle constraint  coupled the evolution of  $\res z$ and $\uu$ on $\GC$. \EEE
 %

\begin{lemma}[Summary of {\cite[Sec.\ 3.2, Lemma 5\,+\,proof]{MiRoTh10DDNE}} for the setting of \eqref{Ens-Sec6}]
\label{Lemma-MRS-OLD}
Let $r\in(1,\infty)$ and $\gr^\eps(z)\geq \eps^\gamma$ for some constant $\gamma\in(1,\infty)$ and denote 
\begin{equation*}
\res{X}_r:=\rmH^1_{\Gamma_\mathrm{Dir}}(\Omega_-{\cup}\Omega_+;\R^d)\times \rmW^{1,r}(\Omega_\mathrm{D})\,.
\end{equation*}
Let $(t_\eps,\uu_\eps,\res{z}_\eps)$ be a stable sequence for the rate-independent systems $(\res{X}_r,\res{H}_{\gr^\eps},\res{R}_\eps)$ such that 
\begin{equation*}
t_\eps\to t\;\text{ in }[0,T]\,,\quad
\uu_\eps\overset{\tau_U}{\longrightarrow}\uu\,,\;
\text{and}\quad
\res{z}_\eps\rightharpoonup\res{z}\;\text{ in }\rmW^{1,r}(\Omega_D)\,.
\end{equation*}
Consider $(\hat \uu,\hat{\res{z}})\in\res{X}_r$ such that $\partial_{x_1}\hat{\res{z}}\equiv0$ a.e.\ in $\Omega_\mathrm{D}$. With $\hat\uu^\pm:=\hat\uu|_{\Omega_\pm},$ $I_\eps^+:=[0,\eps)$ as well as $I_\eps^-:=(-\eps,0]$ for all $\eps>0,$ introduce the reflections of $\hat\uu^\pm|_{I_\eps^\pm}$ along the interface $\{0\}\times\GC$ as well as their interpolation $\intp^\eps(\hat \uu)\in \rmH^1(\Omega^\eps_D,\R^d)$ by 
\begin{equation*}
\intp^\eps(\hat\uu):=
\frac{\eps-x_1}{2\eps}\hat \uu^-(\pm x_1,s)
+\frac{\eps+x_1}{2\eps}\hat \uu^+(\mp x_1,s)
\quad\text{for }x_1\in I_\eps^\mp
\end{equation*}
and define the function 
$\recop{\uu}{\eps}(\hat \uu)\in \rmH^1(\Omega,\R^d)$ as follows
\begin{subequations}
\label{mrs-old}
\begin{equation}
\label{mrs-old-u}
\recop{\uu}{\eps}(\hat \uu)
(x_1,s):=\left\{
\begin{array}{ll}
\hat\uu^\pm(x_1,s)&\text{ if }(x_1,s)\in\Omega^\eps_\pm\,,\\
\intp^\eps(\hat\uu)(x_1,s)&\text{ if }(x_1,s)\in\Omega^\eps_D\,.
\end{array}
\right.
\end{equation}
Furthermore, for $\hat{\res{z}}\in \rmW^{1,r}(\Omega_D)$ with  $\partial_{x_1}\hat{\res{z}}\equiv0$ a.e.\ in $\Omega_\mathrm{D}$ define the function $\recop{\res{z}}{\eps}(\hat{\res{z}})\in \rmW^{1,r}(\Omega_D)$ by 
\begin{equation}
\label{mrs-old-z}
\recop{\res{z}}{\eps}(\hat{\res{z}}):=\max\Big\{0,\min
\big\{\res{z}_\eps,\hat{\res{z}}-\delta_\eps\big\}\Big\}
\end{equation}
for $\delta_\eps=o(\|\res{z}_\eps-\res{z}\|_{\rmL^r(\Omega_D)})$ determined by Markov's inequality $\mathrm{(M)}$ such that 
\begin{equation*}
\calL^d\Big([|\res{z}_\eps=\res{z}|>\delta_\eps]\Big)
\overset{\text{\rm{(M)}}}{\leq}
\delta_\eps^{-r}\|\res{z}_\eps-\res{z}\|_{\rmL^r(\Omega_D)}^r\,\overset{!}{\to}0\,.
\end{equation*}
\end{subequations}
Then, for all   $(\hat \uu,\hat{\res{z}})\in\res{X}_r$ \EEE  such that $\partial_{x_1}\hat{\res{z}}\equiv0$ a.e.\ in $\Omega_\mathrm{D}$ the sequence 
$(\recop{\uu}{\eps}(\hat \uu),\recop{\res{z}}{\eps}(\hat{\res{z}}))_\eps$ constructed by \eqref{mrs-old} is a mutual recovery sequence, i.e.\ it satisfies
\begin{equation}
\label{MRSC-old}
\limsup_{\eps\to0}\Big(
\res{H}_{\gr^\eps}(t_\eps,\recop{\uu}{\eps}(\hat \uu),\recop{\res{z}}{\eps}(\hat{\res{z}}))
+\res{R}_\eps(\recop{\res{z}}{\eps}(\hat{\res{z}})-\res{z}_\eps)
-\res{H}_{\gr^\eps}(t_\eps,\uu_\eps,\res z_\eps)
\Big)
\leq
\Big(
\res{H}(t,\hat\uu,\hat{\res{z}})
+\res{R}(\hat{\res{z}}-\res{z})
-\res{H}(t,\uu,\res z)
\Big)\,,
\end{equation}
and in particular, along a not relabeled subsequence there holds as  $\eps\to0$ \EEE
\begin{equation}
\label{conv-mrs-old}
 \recop{\uu}{\eps}(\hat \uu)\overset{\tau_U}{\longrightarrow}\hat\uu 
 \text{ in } \rmH^1_{\Gamma_\mathrm{Dir}}(\Omega_-{\cup}\Omega_+;\R^d) 
\quad
\text{and}\quad
 \recop{\res{z}}{\eps}(\hat{\res{z}})\rightharpoonup\hat{\res{z}}\;\text{ in }\rmW^{1,r}(\Omega_D), \EEE
\end{equation}
as well as 
\begin{subequations}
\label{props-ens-old}
\begin{align}
\label{props-ens-old-grad}
\hspace*{-3ex}
\limsup_{\eps\to0}\Big(
\res{J}_\eps(\recop{\res{z}}{\eps}(\hat{\res{z}}))-\res{J}_\eps(\res{z}_\eps)
\Big)
\leq
\limsup_{\eps\to0}\res{J}_\eps(\recop{\res{z}}{\eps}(\hat{\res{z}}))
-\liminf_{\eps\to0}\res{J}_\eps(\res{z}_\eps)
&\leq
\res{J}(\hat{\res{z}})-\res{J}(\res z)\,,\\
\limsup_{\eps\to0}
\int_{\Omega_+^\eps\cup\Omega_-^\eps}\big(\rmW(e(\recop{\uu}{\eps}(\hat \uu)))-\rmW(e(\uu_\eps))\big)
\,\mathrm{d}x
\label{props-ens-old-W}
&\leq
\int_{\Omega_+\cup\Omega_-}\rmW(e(\hat \uu))-\rmW(e(\uu))
\,\mathrm{d}x
\,,\\
\hspace*{-5ex}
\limsup_{\eps\to0}
\int_{\Omega_\mathrm{D}^\eps}\!\!\!
\WD(\recop{\res{z}}{\eps}(\hat{\res{z}}){\circ}\mathrm{T}_\eps^{-1},e(\recop{\uu}{\eps}(\hat \uu)))
-\WD(\res{z}_\eps{\circ}\mathrm{T}_\eps^{-1},e(\uu_\eps))
\,\mathrm{d}x
\label{props-ens-old-WD}
&\leq
\int_{\GC}\!\!\!
\big(
I_{\{0\}}(\hat{\res{z}}\JUMP{\hat \uu})
-I_{\{0\}}({\res{z}}\JUMP{\uu})
\big)
\,\mathrm{d}x'
\,,\\
\res{R}_\eps(\recop{\res{z}}{\eps}(\hat{\res{z}})-\res{z}_\eps)&\longrightarrow\res{R}(\hat{\res{z}}-\res{z})\,.
\end{align}
\end{subequations}
\end{lemma}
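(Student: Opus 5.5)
This lemma is a summary of \cite[Sec.\ 3.2, Lemma 5]{MiRoTh10DDNE}. I would therefore verify that the construction there carries over to the functionals $\res{H}_{\gr^\eps}$ and $\res{J}_\eps$ from \eqref{Ens-Sec6}, and then split the limsup in \eqref{MRSC-old} into the four contributions \eqref{props-ens-old}. Since the limsup of a sum is at most the sum of the limsups, \eqref{MRSC-old} follows once each of the four estimates is established.

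\textbf{Step 1: convergences \eqref{conv-mrs-old}.} For the displacement, $\recop{\uu}{\eps}(\hat\uu)=\hat\uu$ on $\Omega^\eps_\pm\supset\Omega^\nu_\pm$ for every $\nu\geq\eps$. Hence $\tau_U$-convergence is trivial, and \eqref{props-ens-old-W} reduces to showing that $\int_{\Omega_\pm\setminus\Omega^\eps_\pm}\rmW(e(\hat\uu))\to0$ by absolute continuity of the integral. The term $-\int_{\Omega^\eps_\pm}\rmW(e(\uu_\eps))$ is handled by weak lower semicontinuity (convexity of $\rmW$) on each fixed $\Omega^\nu_\pm$, followed by letting $\nu\downarrow0$.

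For the damage variable, $\recop{\res z}{\eps}(\hat{\res z})$ is a truncation of elements that are bounded in $\rmW^{1,r}$, so it is bounded there as well. Since $\res z_\eps\to\res z$ strongly in $\rmL^r$ and $\delta_\eps\to0$, its strong $\rmL^r$-limit is $\max\{0,\min\{\res z,\hat{\res z}\}\}$. The stability of $(\uu_\eps,\res z_\eps)$ and the closedness argument give $\hat{\res z}\leq\res z$ whenever $\res R(\hat{\res z}-\res z)<\infty$; the case $\res R(\hat{\res z}-\res z)=\infty$ is trivial. Moreover $\hat{\res z}\geq 0$ on $\mathrm{dom}\,\upphi$. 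So the limit is $\hat{\res z}$.

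\textbf{Step 2: the gradient and dissipation terms.} By construction, $\recop{\res z}{\eps}(\hat{\res z})$ coincides a.e.\ with either $\res z_\eps$, $\hat{\res z}-\delta_\eps$, or $0$. On the first set the integrands of $\res J_\eps$ cancel exactly. On the second set, $\partial_{x_1}\hat{\res z}=0$, so the singular term $\tfrac1{r\eps}|\partial_{x_1}\cdot|^r$ vanishes there. The sets where $\res z_\eps>\hat{\res z}-\delta_\eps$ differ from the limiting contact set only up to a set of measure $\to0$, by the Markov choice of $\delta_\eps$. Together with convexity of $\upphi$ and absolute continuity of $\int|\nabla\hat{\res z}|^r$, this yields \eqref{props-ens-old-grad}.

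For the dissipation, $\recop{\res z}{\eps}(\hat{\res z})\leq\res z_\eps$, so $\res R_\eps$ is finite and equals $\kappa\int(\res z_\eps-\recop{\res z}{\eps}(\hat{\res z}))$. By the strong $\rmL^1$ convergences of Step 1, this tends to $\res R(\hat{\res z}-\res z)$.

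\textbf{Step 3 (main obstacle): estimate \eqref{props-ens-old-WD}.} The subtracted term is nonnegative, so it suffices to bound the limsup of the first term, together with the fact that $I_{\{0\}}(\res z\JUMP{\uu})=0$. That fact follows from $\liminf_\eps\int\WD(\res z_\eps\circ\mathrm T_\eps^{-1},\cdot)<\infty$, by the argument in Proposition~\ref{prop:limit-jump} for $\gamma>1$. If $\hat{\res z}\JUMP{\hat\uu}\neq0$ on a set of positive measure, the right-hand side is $+\infty$ and there is nothing to prove. Otherwise I would compute $\nabla\intp^\eps(\hat\uu)$ directly. Its $x_1$-derivative is $\JUMP{\cdot}/(2\eps)$ plus bounded reflected gradients. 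Hence
\[
\int_{\Omega^\eps_{\rm D}}\gr^\eps(\recop{\res z}{\eps}(\hat{\res z})\circ\mathrm T_\eps^{-1})\,|\nabla\intp^\eps(\hat\uu)|^2\,\mathrm dx\lesssim \frac1\eps\int_{\GC}\gr^\eps(\recop{\res z}{\eps}(\hat{\res z}))\,|\hat\uu^+(\cdot)-\hat\uu^-(\cdot)|^2\,\mathrm dx'+o(1).
\]
Here $\recop{\res z}{\eps}(\hat{\res z})\leq\hat{\res z}$, and $\hat{\res z}=0$ where $\JUMP{\hat\uu}\neq0$. On that region $\gr^\eps=\gr^\eps(0)\leq C\eps^\gamma$ with $\gamma>1$, so the prefactor $\eps^{\gamma-1}\to0$. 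On the complementary region the jump-like difference of reflected traces is $O(\eps^{1/2})$ in $\rmL^2$ by \eqref{trivial-but-useful}, which compensates the factor $\tfrac1\eps$. Justifying these last bounds, in particular the control of the mismatch between the truncation at level $\delta_\eps$ and the zero set of $\hat{\res z}$, is where I expect the real difficulty to lie.
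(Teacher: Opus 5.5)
Your architecture is the one from \cite{MiRoTh10DDNE} that the paper reproduces in the proof of \eqref{Jump-3-1}--\eqref{Jump-3-2}. That is: four separate estimates, interpolation of reflected traces, and a split of $\GC$ into the zero set $L^0_{\hat{\res z}}$ of $\hat{\res z}$ and its complement. On $L^0_{\hat{\res z}}$ your argument is fine, since $\gr^\eps=\gr^\eps(0)\sim\eps^\gamma$ there and $\|\JUMP{\hat\uu}\|^2_{\rmL^2(\GC)}$ carries the vanishing prefactor $\eps^{\gamma-1}$.

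The gap is on $\GC\setminus L^0_{\hat{\res z}}$. There $\JUMP{\hat\uu}=0$, but $\gr^\eps(\recop{\res z}{\eps}(\hat{\res z}))$ is in general of order one wherever $\hat{\res z}>0$. So the contribution of $G_2^\eps=\frac{1}{2\eps}(\hat\uu^+-\hat\uu^-)$ must tend to zero, and an $O(\eps^{1/2})$ bound as in \eqref{trivial-but-useful} only ``compensates'' the factor $\frac1\eps$ up to a constant:
\[
\int_{-\eps}^{\eps}\frac{1}{4\eps^2}\big\|\hat\uu^+(|x_1|,\cdot)-\hat\uu^-({-}|x_1|,\cdot)\big\|^2_{\rmL^2(\GC\setminus L^0_{\hat{\res z}})}\,\mathrm{d}x_1
\leq\frac{1}{4\eps^2}\int_{-\eps}^{\eps}2|x_1|\,\mathrm{d}x_1\,\big(\|\hat\uu^+\|^2_{\rmH^1(\Omega_+)}+\|\hat\uu^-\|^2_{\rmH^1(\Omega_-)}\big)
=\tfrac12\big(\|\hat\uu^+\|^2_{\rmH^1(\Omega_+)}+\|\hat\uu^-\|^2_{\rmH^1(\Omega_-)}\big).
\]
This yields $\limsup\leq C$, not the right-hand side $0$ of \eqref{props-ens-old-WD}.

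What is missing is an $o(\eps^{1/2})$ bound. The proof of \eqref{trivial-but-useful} actually gives
\[
\|\hat\uu^+(s,\cdot)-\hat\uu^+(0,\cdot)\|^2_{\rmL^2(\GC)}\leq s\,\|\partial_{x_1}\hat\uu^+\|^2_{\rmL^2((0,s)\times\GC)},
\]
and similarly on the minus side. The last factor tends to zero as $s\downarrow0$ by absolute continuity of the integral. This is exactly \eqref{Jump-3-1-3}--\eqref{Jump-3-1-4}, which give $\|G_2^\eps\|^2\leq\frac14\big(\|\partial_{x_1}\hat\uu^+\|^2_{\rmL^2(\OD^\eps)}+\|\partial_{x_1}\hat\uu^-\|^2_{\rmL^2(\OD^\eps)}\big)\to0$.

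Conversely, the point you flag as the real difficulty is not one. Since $\hat{\res z}$ is independent of $x_1$ and $\recop{\res z}{\eps}(\hat{\res z})\leq\max\{0,\hat{\res z}-\delta_\eps\}$, the recovery damage variable vanishes identically on the whole cylinder over $L^0_{\hat{\res z}}$. Off that cylinder you only need $\sup_{[0,1]}\gr^\eps\leq C$, so the $\delta_\eps$-level sets never enter (cf.\ \eqref{Jump-2}).

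Two smaller points:
\begin{itemize}
\item In Step~1, $\hat{\res z}\leq\res z$ a.e.\ is simply equivalent to $\res R(\hat{\res z}-\res z)<\infty$ by \eqref{dissRdens}; stability and closedness play no role.
\item In Step~3, you cannot invoke the argument of Proposition~\ref{prop:limit-jump} to get the limit brittle constraint $\res z\JUMP{\uu}=0$. Its key ingredient, Lemma~\ref{l:liminf-crucial}, uses \eqref{ci-salvano} with $\rho<1$ and $\|\uu_\eps\|^2_{\rmH^1(\OD^\eps)}\leq C\eps^{-1}$ (from $\gr^\eps\geq\eps$). Here $\res J_\eps$ only controls $\frac1\eps\|\partial_{\res x_1}\res z_\eps\|^r_{\rmL^r(\OD)}$, and $\gr^\eps\geq\eps^\gamma$ only gives $\|\uu_\eps\|^2_{\rmH^1(\OD^\eps)}\leq C\eps^{-\gamma}$. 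That fact has to come from the $\Gamma$-liminf of \cite{MiRoTh10DDNE}.
\end{itemize}
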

\noindent 
The above result applies in particular for $r=2$ as well as for $r=q$ and it ensures that the constructed recovery sequence $(\res{z}_\eps)_\eps$ lies in $\rmW^{1,r}(\Omega_D)$. 
\par
Our task is now to adapt the above construction \eqref{mrs-old} to the setting $\gamma=1$ for the rate-independent systems $(\res{X}_\eps,\res{E}_{\gr^\eps},\res{R}_\eps)_\eps$ and its limit  $(\res{X},\res{E},\res{R})$.  Namely, for all $(\hat{\uu},\hat{\res{z}})\in\res{X}$   we will have to exhibit  \EEE  a mutual recovery sequence $(\recop{\uu}{\eps}(\hat \uu),\recop{\res{z}}{\eps}(\hat{\res{z}}))_\eps\subset{\res{X}}_\eps$ and  show in analogy to \eqref{MRSC-old} that the following mutual recovery sequence condition is satisfied: 
\begin{equation}
\label{MRSC-new}
\limsup_{\eps\to0}\Big(
{\res{E}}_{\gr^\eps}(t_\eps,\recop{\uu}{\eps}(\hat \uu),\recop{\res{z}}{\eps}(\hat{\res{z}}))
+\res{R}_\eps(\recop{\res{z}}{\eps}(\hat{\res{z}})-\res{z}_\eps)
-{\res{E}}_{\gr^\eps}(t_\eps,\uu_\eps,\res z_\eps)
\Big)
\leq
\Big(
{\res{E}}(t,\hat{\uu},\hat{\res{z}})
+\res{R}(\hat{\res{z}}-\res{z})
-{\res{E}}(t,\uu,\res z)
\Big)\,.
\end{equation}
Due to the presence of the $q$-Laplacian energy term in $\res{E}_\eps$ that scales with $\eps^\rho,$ 
this requires in particular to suitably mollify a competitor $\hat{\res{z}}\in\res{Z}$ before applying the construction \eqref{mrs-old-z}. This mollification will be done in such a controlled way that 
the regularization term still vanishes 
as $\eps\to0$ despite the blow up of the $\rmW^{1,q}$-gradient. Furthermore, in order to deduce \eqref{MRSC-new} it also has to be 
verified that relations alike \eqref{props-ens-old} are satisfied for the adapted mutual recovery sequence also in the setting $\gamma=1$.  
\par 
Accordingly, we gather the result on the mutual recovery sequence for $\gamma=1$ in the following proposition: 
\begin{proposition}
Let $\gr^\eps(z)=z^2+\eps^\gamma$ with $\gamma=1$. 
 Let $q>d$ and  $0<\rho< 2-\tfrac4\lambda$,
 with $\lambda = \min\{4, q\}$. \EEE 
Let $(t_\eps,\uu_\eps,\res{z}_\eps)$ be a stable sequence for the rate-independent systems $(\res{X}_\eps,\res{E}_{\gr^\eps},\res{R}_\eps)$ such that 
\begin{equation}
\label{conv-stab-seq-new}
t_\eps\to t\;\text{ in }[0,T]
\quad\text{and}\quad
(\uu_\eps,\res{z}_\eps)\overset{\res{X}}{\rightharpoonup} (\uu,\res{z})\,.
\end{equation}
Fix  $(\hat \uu,\hat{\res{z}})\in\res{X}$  such that $\hat{\res{z}}\in[0,1]$ a.e.\ in $\Omega_D$.  \EEE 
For $d\in\N$ denote by $B^d_1(0)\subset\R^d$ 
the open ball in $\R^d$ with center in $0\in\R^d$ and radius $1$.   Denote by 
\begin{equation*}
\eta_1(x'):=\left\{\begin{array}{ll}
k\exp(-1/(1-|x'|^2))&\text{ if }|x'|<1\,,\\
0&\text{ otw.\,}
\end{array}\right.
\end{equation*}
the standard mollifier with $x'\in\R^{d-1}$ and normalization constant $k>0$ and  
\begin{equation}
\label{def-molli}
\eta_{g(\eps)}(x'):=\frac{1}{g(\eps)^{d-1}}\,\eta_1(x'/g(\eps))\quad\text{ with }
 g(\eps):=\eps^{\rho/(2q)}\EEE 
\,.
\end{equation}
 We extend $\hat{\res{z}}$ from $\rmH^1(\Omega_D)$ to $\rmH^1(\Omega_D+B_1^d(0))$ such that also $\partial_{x_1}\hat{\res{z}}_1\equiv0$ and $\hat{\res{z}}_1\in[0,1]$ a.e.\ in $\Omega_D+B_1^d(0)$. We denote this extension by $\hat{\res{z}}_1$ and its trace to 
$\GC+B_1^{d-1}(0)$ by $\hat{\res{z}}_1|_{\GC+B_1^{d-1}(0)}$  and set 
\begin{equation}
\label{mrs-new-reg-z}
N_\eps(\hat{\res{z}})({\res{r}}_1,x'):=(\hat{\res{z}}_1|_{\GC+B_1^{d-1}(0)}*\eta_{g(\eps)})(x')\quad\text{for all }({\res{r}}_1,x')\in\Omega_D\,.
\end{equation}
\begin{subequations}
\label{mrs-new}
We define $\recop{\res{z}}{\eps}(\hat{\res{z}})$  (with slight abuse of notation) \EEE  by 
\begin{equation}
\label{mrs-new-z}
\recop{\res{z}}{\eps}(\hat{\res{z}}):=\recop{\res{z}}{\eps}(N_\eps(\hat{\res{z}}))
=\max\Big\{0,\min
\big\{\res{z}_\eps,N_\eps(\hat{\res{z}})-\delta_\eps\big\}\Big\}
\end{equation}
with $\delta_\eps$ determined as in \eqref{mrs-old-z} with $r=2$. Like in \eqref{mrs-old-u} we also define $\recop{\uu}{\eps}(\hat \uu)\in \rmH^1(\Omega,\R^d)$ by 
\begin{equation}
\label{mrs-new-u}
\recop{\uu}{\eps}(\hat \uu)
(x_1,s):=\left\{
\begin{array}{ll}
\hat\uu^\pm(x_1,s)&\text{ if }(x_1,s)\in\Omega^\eps_\pm\,,\\
\intp^\eps(\hat\uu)(x_1,s)&\text{ if }(x_1,s)\in\Omega^\eps_D\,.
\end{array}
\right.
\end{equation}
\end{subequations}
Then, for all $(\hat \uu,\hat{\res{z}})\in\res{X}$ such that $\partial_{x_1}\hat{\res{z}}\equiv0$ a.e.\ in $\Omega_\mathrm{D}$ there holds 
\begin{equation}
\label{space-mrs-new}
\recop{\uu}{\eps}(\hat \uu)\in \rmH^1_{\Gamma_D}(\Omega)
\quad\text{and}\quad
\recop{\res{z}}{\eps}(\hat{\res{z}})\in \rmW^{1,q}(\Omega_D) 
\end{equation}
and the sequence 
$(\recop{\uu}{\eps}(\hat \uu),\recop{\res{z}}{\eps}(\hat{\res{z}}))_\eps$ constructed in \eqref{mrs-new} is a mutual recovery sequence, i.e.\ it satisfies the mutual recovery sequence condition \eqref{MRSC-new}. In particular, along a not relabeled subsequence there holds 
\begin{equation}
\label{conv-mrs-new}
 (\recop{\uu}{\eps}(\hat \uu),\recop{\res{z}}{\eps}(\hat{\res{z}}))\overset{\res{X}}{\rightharpoonup} (\hat\uu,\hat{\res{z}})\
\end{equation}
and the following statements hold true:
\begin{subequations}
\label{props-ens-new}
\begin{align}
\label{props-ens-new-1}
\limsup_{\eps\to0}\Big(
 \res{F}_\eps \EEE (\recop{\res{z}}{\eps}(\hat{\res{z}}))- \res{F}_\eps \EEE (\res{z}_\eps)
\Big)
&\leq
 \res{F}(\hat{\res{z}})-\res{F}(\res z) \EEE \,,\\
\nonumber
\limsup_{\eps\to0}\Big(
\int_{\Omega_+^\eps\cup\Omega_-^\eps}\rmW(\nabla\recop{\uu}{\eps}(\hat \uu))&-\rmW(\nabla\uu_\eps)
\,\mathrm{d}x-\langle\BS{f}(t),\recop{\uu}{\eps}(\hat \uu)-\uu_\eps\rangle\Big)
\\
\label{props-ens-new-2a}
&\leq
\int_{\Omega_+\cup\Omega_-}\rmW(\nabla\hat \uu)-\rmW(\nabla\uu)
\,\mathrm{d}x-\langle \BS{f}(t),\hat \uu-\uu\rangle
\,,\\
\nonumber
\limsup_{\eps\to0}\Big(
\int_{\Omega_\mathrm{D}^\eps}
\WD(\recop{\res{z}}{\eps}(\hat{\res{z}}){\circ}\mathrm{T}_\eps^{-1}&,\nabla\recop{\uu}{\eps}(\hat \uu))
-\WD(\res{z}_\eps{\circ}\mathrm{T}_\eps^{-1},\nabla\uu_\eps)
\,\mathrm{d}x\Big)\\
\label{props-ens-new-2b}
&\leq
\int_{\GC}
\big(
I_{\{0\}}(\hat{\res{z}}\JUMP{\hat \uu})+\frac{\mu\EEE}{2}\big|\JUMP{\hat \uu}\big|^2
-I_{\{0\}}({\res{z}}\JUMP{\uu})-\frac{\mu\EEE}{2}\big|\JUMP{\uu}\big|^2
\big)
\,\mathrm{d}x'
\,,\\
\label{props-ens-new-3}
\res{R}_\eps(\recop{\res{z}}{\eps}(\hat{\res{z}})-\res{z}_\eps)&\longrightarrow\res{R}(\hat{\res{z}}-\res{z})\,,\\
\label{props-ens-new-4}
\limsup_{\eps\to0}\eps^\rho\Big(
\res{A}_q^\eps(\recop{\res{z}}{\eps}(\hat{\res{z}}))
-\res{A}_q^\eps(\res{z}_\eps)
\Big)
&=0\,.
\end{align}
\end{subequations}
\end{proposition}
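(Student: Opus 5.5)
The plan is to verify the relations \eqref{props-ens-new} term by term. Summing them then yields \eqref{MRSC-new}, since the energies split additively into bulk elastic, damageable elastic, potential and forcing parts. The convergence \eqref{conv-mrs-new} and the regularity \eqref{space-mrs-new} come first and are largely routine. $\recop{\uu}{\eps}(\hat\uu)$ coincides with $\hat\uu$ on $\Omega^\eps_\pm$ and is a linear interpolation of reflections in $\OD^\eps$, hence lies in $\rmH^1_{\GDir}(\Omega)$ and converges in $\tau_U$. The mollified competitor $N_\eps(\hat{\res z})$ is smooth in $x'$ and independent of $\res{x}_1$, so it belongs to $\rmW^{1,q}(\OD)$. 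The truncation $\max\{0,\min\{\res z_\eps,\cdot\}\}$ preserves $\rmW^{1,q}$, because $\res z_\eps\in\Hs(\OD)$. Since $N_\eps(\hat{\res z})\to\hat{\res z}$ strongly in $\rmH^1(\OD)$ and $\res z_\eps\to\res z$ strongly in $\rmL^2$ (Rellich), the Markov-inequality argument of Lemma~\ref{Lemma-MRS-OLD} with $r=2$ carries over. It gives $\recop{\res z}{\eps}(\hat{\res z})\weakto\hat{\res z}$ in $\rmH^1(\OD)$ and the dissipation convergence \eqref{props-ens-new-3}. Here we use that, up to sets of vanishing measure, $\recop{\res z}{\eps}\le\res z_\eps$ and that $\recop{\res z}{\eps}$ equals $N_\eps(\hat{\res z})-\delta_\eps$ where $\hat{\res z}\le\res z$.

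\textbf{Gradient and potential terms.} For \eqref{props-ens-new-1} I will follow \eqref{props-ens-old-grad} with $r=2$. On the set where $\recop{\res z}{\eps}=\res z_\eps$ the gradient contributions cancel. On the set where $\recop{\res z}{\eps}=N_\eps(\hat{\res z})-\delta_\eps$ we have $\partial_{\res x_1}\recop{\res z}{\eps}=0$, so $\tfrac12\res A^\eps$ reduces to $\tfrac12\int|\nabla_{x'}N_\eps(\hat{\res z})|^2$, which converges by strong $\rmH^1$ convergence of the mollification. Combined with the weak lower semicontinuity of $\liminf\res F_\eps(\res z_\eps)\ge\res F(\res z)$ (as in Proposition~\ref{prop:proof-C4}), this gives the $\limsup$ of the difference. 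The $\upphi$-term is handled by convexity and $\hat{\res z}\in[0,1]$ as in \cite{MiRoTh10DDNE}.

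\textbf{The $q$-term \eqref{props-ens-new-4}.} On the coincidence set with $\res z_\eps$ the difference vanishes. On the complement only $x'$-derivatives of $N_\eps(\hat{\res z})$ enter. By Young's convolution inequality and $\|\nabla\eta_{g}\|_{\rmL^1}\le Cg^{-1}$ we get $\|\nabla_{x'}N_\eps\|_{\rmL^\infty}\le C g(\eps)^{-1}$, since $\hat{\res z}_1\in[0,1]$. Hence $\eps^\rho\res A_q^\eps$ restricted there is at most $C\eps^\rho g(\eps)^{-q}=C\eps^{\rho/2}\to0$. Because $\eps^\rho\res A^\eps_q(\res z_\eps)\ge0$, the $\limsup$ of the difference is $\le0$, and in fact equals $0$ once the cancellation on the common set is accounted for. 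This is precisely why $g(\eps)=\eps^{\rho/(2q)}$ is chosen.

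\textbf{Elastic terms and the main obstacle.} \eqref{props-ens-new-2a} follows as \eqref{props-ens-old-W}. Since $\recop{\uu}{\eps}(\hat\uu)=\hat\uu$ on $\Omega^\eps_\pm$, the bulk integral converges to $\int_{\Omega_\pm}\rmW(\nabla\hat\uu)$. For the $\uu_\eps$-part we use lower semicontinuity under $\tau_U$, and the forcing terms converge thanks to \eqref{force}. The main obstacle is \eqref{props-ens-new-2b}. The lower bound $\liminf\int_{\OD^\eps}\WD(\res z_\eps,\nabla\uu_\eps)\ge\res G(\uu,\res z)$ is Proposition~\ref{prop:limit-jump}, so it remains to show $\limsup\int_{\OD^\eps}\WD(\recop{\res z}{\eps},\nabla\recop{\uu}{\eps})\le\res G(\hat\uu,\hat{\res z})$. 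We may assume $\hat{\res z}\JUMP{\hat\uu}=0$. In $\OD^\eps$ we have $\partial_{x_1}\intp^\eps(\hat\uu)=\tfrac1{2\eps}(\hat\uu^+-\hat\uu^-)(\mp x_1,\cdot)$ plus bounded terms, and the tangential derivatives are bounded in $\rmL^2$. Their contribution therefore vanishes because $\gr^\eps\le2$ and $|\OD^\eps|\to0$. The leading term is $\tfrac\mu2\int_{\OD^\eps}(\recop{\res z}{\eps}{}^2+\eps)\tfrac1{4\eps^2}|\hat\uu^+(\cdot)-\hat\uu^-(\cdot)|^2$ at reflected points. The $\eps$-part tends to $\tfrac\mu2\int_{\GC}|\JUMP{\hat\uu}|^2$ by \eqref{trivial-but-useful}-type trace continuity. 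The delicate part is $\tfrac1{\eps}\int\recop{\res z}{\eps}{}^2|\hat\uu^+-\hat\uu^-|^2$. Here I would use $\recop{\res z}{\eps}\le(N_\eps(\hat{\res z})-\delta_\eps)^+$ and try to show $\tfrac1\eps\int_{\OD^\eps}N_\eps(\hat{\res z})^2|\hat\uu^+(\mp x_1)-\hat\uu^-(\pm x_1)|^2\to0$. This needs care, since mollification spreads $\hat{\res z}$ into the crack set $\{\JUMP{\hat\uu}\neq0\}$. If a direct estimate fails, I would first approximate $\hat\uu$ (a diagonal argument in the MRS framework of \cite{MRS06}) by competitors whose jump is supported at positive distance, of order $\gg g(\eps)$, from $\mathrm{supp}\,\hat{\res z}$, or alternatively replace $N_\eps(\hat{\res z})$ by a mollification of $(\hat{\res z}-\delta)^+$ to keep the support inside $\{\JUMP{\hat\uu}=0\}$.
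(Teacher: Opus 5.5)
Up to the crux your plan follows the paper's proof. This includes the truncation of the mollified competitor for \eqref{space-mrs-new} and the Markov argument for \eqref{conv-mrs-new} and \eqref{props-ens-new-3}. It also includes the cancellation of the gradient terms where $\recop{\res z}{\eps}(\hat{\res z})=\res z_\eps$; you handle this more carefully than \eqref{r1-der-Neps}, which overlooks that $\partial_{\res x_1}\recop{\res z}{\eps}(\hat{\res z})=\partial_{\res x_1}\res z_\eps$ on $B_\eps$. The bound $\eps^\rho g(\eps)^{-q}=\eps^{\rho/2}$ for the $q$-term only gives ``$\le 0$'' in \eqref{props-ens-new-4}, but that is all the MRS condition needs. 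Finally, \eqref{props-ens-new-2a} follows as in Lemma~\ref{Lemma-MRS-OLD}.

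The genuine gap is \eqref{props-ens-new-2b}. You never prove $\limsup_{\eps}\int_{\OD^\eps}\WD(\recop{\res z}{\eps}(\hat{\res z})\circ\mathrm T_\eps^{-1},\nabla\recop{\uu}{\eps}(\hat\uu))\,\mathrm dx\le\res G(\hat\uu,\hat{\res z})$. The obstruction is the contribution $\frac\mu2\int_{\OD^\eps}(\recop{\res z}{\eps}(\hat{\res z})\circ\mathrm T_\eps^{-1})^2|\partial_{x_1}\recop{\uu}{\eps}(\hat\uu)|^2\,\mathrm dx$. It is of order $\frac1\eps\int_{\OD}(\recop{\res z}{\eps}(\hat{\res z}))^2|\JUMP{\hat\uu}|^2\,\mathrm d\res x$ over the part of $L^0_{\hat{\res z}}=[\hat{\res z}|_{\GC}=0]$ where $N_\eps(\hat{\res z})>\delta_\eps$, and you leave it open. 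Both of your fallbacks replace the sequence \eqref{mrs-new} by a different one, whereas the proposition asserts that this very sequence is a mutual recovery sequence. Neither fallback is carried out.

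The paper handles this step by splitting $\GC$ into $L^0_{\hat{\res z}}$ and its complement and asserting $L^0_{\hat{\res z}}\subset C_\eps\cap\GC$. That is, it claims $\recop{\res z}{\eps}(\hat{\res z})=0$ and $\gr^\eps=\eps$ on the whole zero set of $\hat{\res z}$. If that held, your delicate term would never arise. Only $\eps\frac\mu2|\nabla\recop{\uu}{\eps}(\hat\uu)|^2$ would survive over $L^0_{\hat{\res z}}$ and produce the jump energy. On $\GC\setminus L^0_{\hat{\res z}}$ the brittle constraint gives $\JUMP{\hat\uu}=0$, and \eqref{Jump-3-1-3}--\eqref{Jump-3-1-4} kill the $(1{+}\eps)$-weighted term.

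However, the only justification offered, $L^0_{N_\eps(\hat{\res z})}\subset L^0_{\hat{\res z}}$, is the reverse of the inclusion needed. It does not exclude $N_\eps(\hat{\res z})>\delta_\eps$ on the part of $L^0_{\hat{\res z}}$ within distance $g(\eps)=\eps^{\rho/(2q)}\gg\eps$ of $[\hat{\res z}>0]$. Moreover, $\delta_\eps$ is tied only to $\|\res z_\eps-\res z\|_{\rmL^2(\OD)}$. So your worry is justified and the paper does not answer it; in fact the construction can fail.

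Here is a counterexample sketch. Take $\effe=0$, $\rmW(F)=|F|^2$ and $\upphi=I_{[0,1]}$; then the pair $(\uu_\eps,\res z_\eps)\equiv(0,1)$ is stable. Markov's inequality then imposes no lower bound on $\delta_\eps$. Let $\hat{\res z}=\min\{1,\mathrm{dist}(\cdot,S)\}$ with a smooth closed $S\Subset\GC$. Once $\delta_\eps\ll g(\eps)$, one has $(N_\eps(\hat{\res z})-\delta_\eps)^+\gtrsim g(\eps)$ on a strip of width $\sim g(\eps)$ inside $S$. Choose a jump behaving like $\mathrm{dist}(\cdot,\partial S)^{1/4}$ there; this is compatible with $\hat\uu\in\rmH^1(\Omega_\pm)$ and with $\hat{\res z}\JUMP{\hat\uu}=0$. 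Then the term is of order $\eps^{-1}g(\eps)^{7/2}=\eps^{7\rho/(4q)-1}\to\infty$.

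The construction therefore has to be modified. One option is to take $\delta_\eps\ge\sup_{L^0_{\hat{\res z}}}N_\eps(\hat{\res z})$, which tends to $0$ when $\hat{\res z}$ is uniformly continuous (automatic for $d=2$); your second fallback is the germ of this. For $d\ge3$, where $\hat{\res z}\in\res Z$ need not be continuous, a further approximation argument is needed.

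Separately, the $\eps$-part of your leading term converges to $\frac\mu4\int_{\GC}|\JUMP{\hat\uu}|^2$, not $\frac\mu2$, since the layer has width $2\eps$. The Jensen step in Lemma~\ref{l:liminf-crucial} likewise gives $\frac{\mu}{4\eps}$ rather than $\frac{\mu}{2\eps}$, so the constant in $\res G$ should be rechecked.
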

\begin{proof}
Fix   $(\hat \uu,\hat{\res{z}})\in\res{X}$ with  $\hat{\res{z}} \in [0,1]$ \EEE  a.e.\ in $\Omega_\mathrm{D}$. 
\par
{\bf Proof of \eqref{space-mrs-new}: }
For $\hat{\res{z}}\in \rmH^1(\Omega_D)$   as above \EEE 
the mollification \eqref{mrs-new-reg-z} results in the function 
$N_\eps(\hat{\res{z}})\in C^\infty(\Omega_D)$ with $\partial_{x_1}\hat{\res{z}}\equiv0$. 
In particular, this function also satisfies $N_\eps(\hat{\res{z}})\in \rmW^{1,q}(\Omega_D)$ and then, by construction \eqref{mrs-new-z} also 
\begin{equation}
\label{r1-der-Neps}
\partial_{{\res{r}}_1}\recop{\res{z}}{\eps}(\hat{\res{z}})\equiv0
\quad\text{in }\Omega_D \qquad \text{and}   \qquad \recop{\res{z}}{\eps}(\hat{\res{z}}) \in [0,1] \text{ a.e.\ in $\Omega_\mathrm{D}$.} \EEE
\end{equation}
The result \cite{Marcus-Mizel} then ensures that  $\recop{\res{z}}{\eps}(\hat{\res{z}})\in \rmW^{1,q}(\Omega_D)$ for all $\eps>0$ with 
\begin{equation}
\label{grad-mrs-z}
\nabla \recop{\res{z}}{\eps}(\hat{\res{z}})(\res{x})=\left\{
\begin{array}{cl}
\nabla N_\eps(\hat{\res{z}})(\res{x})
&\text{ if }\res{x}\in A_\eps:=[0\leq N_\eps(\hat{\res{z}})-\delta_\eps\leq \res{z}_\eps]\,,\\
\nabla\res{z}_\eps(\res{x})&\text{ if }\res{x}\in B_\eps:=[\res{z}_\eps<N_\eps(\hat{\res{z}})-\delta_\eps]\,,\\
0&\text{ if }\res{x}\in C_\eps:=\Omega_D\backslash(A_\eps\cup B_\eps)\,.
\end{array}
\right.
\end{equation}
\par
{\bf Proof of convergence results \eqref{conv-mrs-new}: }
The convergence result for $(\recop{\uu}{\eps}(\hat \uu))_\eps$ can be directly taken from \eqref{conv-mrs-old}. 
To verify the convergence result for 
$(\recop{\res{z}}{\eps}(\hat{\res{z}}))_\eps$ we argue as follows. 
For $g(\eps)$ in \eqref{def-molli} we observe that 
$g(\eps)\to0$ as $\eps\to0$. Moreover, $\overline{\Omega_D}\Subset\Omega_D+B_1(0)$. Hence, by the properties of the mollification by convolution of Sobolev functions, cf.\ e.g.\ \cite[Lemma 3.15, p.\ 52]{Adams}, there holds 
$(\hat{\res{z}}_1*\eta_{g(\eps)})\to\hat{\res{z}}_1$ in $\rmH^1(\Omega_D),$ where  $\hat{\res{z}}_1\equiv\hat{\res{z}}$ in $\rmH^1(\Omega_D)$. Accordingly, we find that 
\begin{equation}
\label{conv-Neps-H1}
N_\eps(\hat{\res{z}})\to \hat{\res{z}}\quad\text{in }\rmH^1(\Omega_D)\, 
\end{equation}
and together with convergence result \eqref{conv-mrs-old} we also conclude the existence of a not relabeled subsequence along which 
\begin{equation}
\label{conv-mrs-new-z}
\recop{\res{z}}{\eps}(\hat{\res{z}})
:=\recop{\res{z}}{\eps}(N_\eps(\hat{\res{z}}))\rightharpoonup\hat{\res{z}}\quad\text{in }\rmH^1(\Omega_D)\,.
\end{equation}
For the latter result, one may follow the arguments 
of \cite[Lemma 5, Step 1]{MiRoTh10DDNE}, resp.\ \cite[Thm.\ 3.14, Step 1]{ThoMie09DNEM}, using that, by construction, the sequence $(\recop{\res{z}}{\eps}(\hat{\res{z}}))_\eps$ is uniformly bounded in $\rmH^1(\Omega_D),$ thus there exists a subsequence weakly converging to a limit $\tilde{\res{z}}$. Then, by the compact embedding of $\rmH^1(\Omega_D)$ in $\rmL^2(\Omega_D)$ and Riesz' convergence theorem, there follows the existence of a further subsequence converging pointwise a.e.\ to $\hat{\res{z}},$ which shows that $\tilde{\res{z}}=\hat{\res{z}}$. 
This finishes the proof of \eqref{conv-mrs-new}. 
\par
With these observations at hand we now first discuss convergence property 
\eqref{props-ens-new-3}, subsequently followed by properties \eqref{props-ens-new-4}, \eqref{props-ens-new-1}, \eqref{props-ens-new-2a}, and \eqref{props-ens-new-2b}. 
\par
{\bf Proof of \eqref{props-ens-new-3}: }Indeed, \eqref{props-ens-new-3} directly follows from convergence results \eqref{conv-mrs-new-z} and \eqref{conv-stab-seq-new}, thanks to the compact embedding of $\rmH^1(\Omega_D)$ in $\rmL^2(\Omega_D)$. 
\par 
{\bf Preparations for the proof of \eqref{props-ens-new-1} and \eqref{props-ens-new-4}: }
In what follows we carry out some estimates for the terms $\res{A}^\eps$ and $\res{A}^\eps_q$. To cover both exponents $2$ and $q,$ we use the placeholder  
$r\in\{2,q\}$ and write $\res{A}^\eps_r$. For shorter notation we also set 
 $\mathcal{A}^\eps_r(v):= |\frac{1}{\eps}\partial_{\res{r}_1}v|^r
+ |\nabla_{x'}v|^r$ \EEE for $v\in \rmW^{1,r}(\Omega_D)$. 
Following the arguments of \cite[Lemma 5, Step 1]{MiRoTh10DDNE}, 
resp.\ \cite[Thm.\ 3.14, Step 2]{ThoMie09DNEM}, we deduce from \eqref{grad-mrs-z} and \eqref{r1-der-Neps} that 
\begin{equation}
\label{grad-est-prep}
\begin{split}
\res{A}^\eps_r(\recop{\res{z}}{\eps}(\hat{\res{z}}))-\res{A}^\eps_r(\res{z}_\eps)
&=\int_{A_\eps}\mathcal{A}^\eps_r(N_\eps(\hat{\res{z}}))
-\mathcal{A}^\eps_r(\res{z}_\eps)\,\mathrm{d}\res{r_1}\,\mathrm{d}x'
-\int_{C_\eps}\mathcal{A}^\eps_r(\res{z}_\eps)\,\mathrm{d}\res{r_1}\,\mathrm{d}x'\\
&\leq \|\nabla_{x'}N_\eps(\hat{\res{z}})\|_{\rmL^r(A_\eps)}^r
-\|\nabla\res{z}_\eps\|_{\rmL^r(A_\eps\cup C_\eps)}^r\\
&\leq\|\nabla_{x'}N_\eps(\hat{\res{z}})\|_{\rmL^r(\Omega_D)}^r
-\|\nabla\res{z}_\eps\|_{\rmL^r(A_\eps\cup C_\eps)}^r
\end{split}
\end{equation}
Based on this estimate we now discuss the two cases $2$ and $q$ for the proofs 
of \eqref{props-ens-new-1} and \eqref{props-ens-new-4} separately. 
\par 
\par
{\bf Proof of \eqref{props-ens-new-1}: }
From \eqref{grad-est-prep} for $r=2$ we deduce that 
\begin{equation}
\label{props-ens-new-1-proof}
\begin{split}
\limsup_{\eps\to0}\Big(
\res{G}_\eps(\recop{\res{z}}{\eps}(\hat{\res{z}}))
-\res{G}_\eps(\res{z}_\eps)
\Big)
&=
\limsup_{\eps\to0}\Big(
\tfrac{1}{2}\res{A}^\eps_2(\recop{\res{z}}{\eps}(\hat{\res{z}}))
-\tfrac{1}{2}\res{A}^\eps_2(\res{z}_\eps)
+\int_{\Omega_D}
\big(\upphi(\recop{\res{z}}{\eps}(\hat{\res{z}}))
-\upphi(\res{z}_\eps)\big)\,\mathrm{d}\res{r}\Big)\\
&\leq 
\limsup_{\eps\to0}
\tfrac{1}{2}\|\nabla_{x'}N_\eps(\hat{\res{z}})\|_{\rmL^2(\Omega_D)}^2
-\liminf_{\eps\to0}\tfrac{1}{2}\|\nabla\res{z}_\eps\|_{\rmL^2(A_\eps\cup C_\eps)}^2\\
&\qquad
+\limsup_{\eps\to0}\int_{\Omega_D}
\big(\upphi(\recop{\res{z}}{\eps}(\hat{\res{z}}))
-\upphi(\res{z}_\eps)\big)\,\mathrm{d}\res{r}
\\
&\leq\tfrac{1}{2}\|\nabla_{x}\hat{\res{z}}\|_{\rmL^2(\Omega_D)}^2
-\tfrac{1}{2}\|\nabla\res{z}\|_{\rmL^2(\Omega_D)}^2
+\int_{\Omega_D}\big(\upphi(\hat{\res{z}})
-\upphi(\res{z})\big)\,\mathrm{d}\res{r}\\
&=\res{G}(\hat{\res{z}})-\res{G}(\res{z})
\,.
\end{split}
\end{equation}
The second estimate in \eqref{props-ens-new-1-proof} stems from the result that 
$-\liminf_{\eps\to0}\tfrac{1}{2}\|\nabla\res{z}_\eps\|_{\rmL^2(A_\eps\cup C_\eps)}^2
\leq -\tfrac{1}{2}\|\nabla\res{z}\|_{\rmL^2(\Omega_D)}^2,$ which can be directly taken from \eqref{props-ens-old-grad}, cf.\ \cite[Lemma 5, Step 1]{MiRoTh10DDNE}, 
resp.\ \cite[Thm.\ 3.14, Step 2]{ThoMie09DNEM}. Additionally, we used convergence result \eqref{conv-Neps-H1} to pass to the limit 
$\tfrac{1}{2}\|\nabla_{x'}N_\eps(\hat{\res{z}})\|_{\rmL^2(\Omega_D)}^2
\to\tfrac{1}{2}\|\nabla\hat{\res{z}}\|_{\rmL^2(\Omega_D)}^2$. Moreover we applied  convergence results \eqref{conv-mrs-new-z} and \eqref{conv-stab-seq-new} together with the compact embedding of $\rmH^1(\Omega_D)\cap \rmL^\infty(\Omega_D)$ in $\rmL^s(\Omega_D)$ for all $s\in(1,\infty)$  and the properties of $\upphi$ \EEE to conclude that 
\begin{equation*}
\limsup_{\eps\to0}\int_{\Omega_D}
\big(\upphi(\recop{\res{z}}{\eps}(\hat{\res{z}}))
-\upphi(\res{z}_\eps)\big)\,\mathrm{d}\res{r}
=\int_{\Omega_D}\big(\upphi(\hat{\res{z}})
-\upphi(\res{z})\big)\,\mathrm{d}\res{r}\,.
\end{equation*}
This finishes the proof of \eqref{props-ens-new-1}.
\par
{\bf Proof of \eqref{props-ens-new-4}: } 
In order to verify \eqref{props-ens-new-4} we estimate the last term 
in \eqref{grad-est-prep} for $r=q$ from above by $0$.  We claim that  
from \eqref{grad-est-prep} and thanks to the 
specific choice of the function $g(\eps)$ from \eqref{def-molli}, it holds \EEE that 
\begin{equation}
\label{props-ens-new-4-proof}
\frac{\eps^\rho}{q}\Big(
\res{A}^\eps_r(\recop{\res{z}}{\eps}(\hat{\res{z}}))-\res{A}^\eps_r(\res{z}_\eps)
\Big)
\leq \frac{\eps^\rho}{q}\|\nabla_{x'}N_\eps(\hat{\res{z}})\|_{\rmL^q(\Omega_D)}^q
\overset{(!)}{\longrightarrow}0\,.
\end{equation}
 In fact, in \EEE view of \eqref{def-molli} we observe for $i\in\{1,\ldots,d-1\}$ and $\tilde x_i'=x_i'/g(\eps)$ that 
\begin{equation}
\label{deriv-molli}
\partial_{x_i'} \eta_{g(\eps)}(x')
=\frac{1}{g(\eps)^{d-1}}\partial_{\tilde x_i'}\eta_1(\tilde x')
\partial_{x_i'}\tilde x_i'
=\frac{1}{g(\eps)^{d}}\partial_{\tilde x_i'}\eta_1(\tilde x')
\end{equation}
With this we further estimate the term in \eqref{props-ens-new-4-proof} as follows
\begin{equation*}
\begin{split}
\frac{\eps^\rho}{q}\|\nabla_{x'}N_\eps(\hat{\res{z}})\|_{\rmL^q(\Omega_D)}^q
&=\frac{\eps^\rho}{q}\int_{\Omega_D}\Big|\int_{\R^{d-1}}
\hat{\res{z}}_1(y')\nabla_{x'}\eta_{g(\eps)}(x'-y')
\,\mathrm{d}y'\Big|^q\,\mathrm{d}\res{r}_1\mathrm{d}x' \\
&\leq \|\hat{\res{z}}\|_{\rmL^\infty(\Omega_D)}^q
\|\nabla_{\tilde x'}\eta_1\|_{\mathrm{C}(\R^{d-1})}^q 
\frac{\eps^\rho}{q}\int_{\Omega_D}\Big|\int_{B_{g(\eps)}^{d-1}(x')}
\frac{1}{g(\eps)^d}\,\mathrm{d}y'\Big|^q\,\mathrm{d}\res{r}_1\,\mathrm{d}x'
\\
&=\|\nabla_{\tilde x'}\eta_1\|_{\mathrm{C}(\R^{d-1})}^q 
\calL^d(\Omega_D)
\frac{\eps^\rho}{q g(\eps)^q}
=\|\nabla_{\tilde x'}\eta_1\|_{\mathrm{C}(\R^{d-1})}^q 
\calL^d(\Omega_D)
\frac{\eps^{\rho/2}}{q}\;\to0\,,
\end{split}
\end{equation*}
where we used $g(\eps):=\eps^{\rho/(2q)}$ from \eqref{def-molli} and  $\rho>0$. \EEE This shows that convergence (!) in \eqref{props-ens-new-4-proof} indeed holds true, so that the proof of \eqref{props-ens-new-4} is finished.
\par
{\bf Proof of \eqref{props-ens-new-2a}: }Here we note that the construction of the recovery sequence $(\recop{\uu}{\eps}(\hat \uu))_\eps$ from \eqref{mrs-new-u} is identical to the one in \eqref{mrs-old-u}. Since also the stored elastic energy density $\rmW$ here has the same properties as in Lemma \ref{Lemma-MRS-OLD} we directly infer that estimate \eqref{props-ens-old-W} holds true also here.   
Moreover, thanks to property \eqref{force} there holds 
$\mathrm{supp} (\effe(t))\cap\Omega_\mathrm{D}^\eps=\emptyset$ in the sense of distributions for all $\eps>0$ and all $t\in[0,T]$. 
Hence, by construction \eqref{mrs-new-u} and convergence property \eqref{conv-mrs-new} we also have that 
\begin{equation}
\limsup_{\eps\to0}\langle \effe(t),\recop{\uu}{\eps}(\hat \uu)-\uu_\eps\rangle
=\langle \effe(t),\hat \uu\rangle-\liminf_{\eps\to0}\langle \effe(t),\uu_\eps\rangle
=\langle \effe(t),\hat\uu-\uu\rangle\,.
\end{equation}
Thus, together with \eqref{props-ens-old-W} we conclude \eqref{props-ens-new-2a}. 
\par
{\bf Proof of \eqref{props-ens-new-2b}: }Assume that $(\hat\uu,\hat{\res{z}})\in\res{X}$ satisfies the brittle constraint, i.e.
\begin{equation}
\label{props-ens-new-2b-brittle}
\hat{\res{z}}\big|_{\GC}\JUMP{\hat\uu}=0\quad\text{a.e.\ on }\GC\,.
\end{equation}
Otherwise $I_{0}(\hat{\res{z}}\JUMP{\hat\uu})=\infty$ a.e.\ on $\GC$ and \eqref{props-ens-new-2b} is trivially satisfied.
\par
In order to verify \eqref{props-ens-new-2b} given \eqref{props-ens-new-2b-brittle}, we follow the lines of the proof in \cite[Lemma 5, Step 3]{MiRoTh10DDNE} for property \eqref{props-ens-old-WD} and check that  
\eqref{props-ens-new-2b} is indeed the outcome in the case of $\gr^\eps(z):=z^2+\eps^\gamma$ with $\gamma=1$.  
We use that 
\begin{equation*}
\begin{split}
&\limsup_{\eps\to0}\Big(
\int_{\Omega_\mathrm{D}^\eps}
\WD(\recop{\res{z}}{\eps}(\hat{\res{z}}){\circ}\mathrm{T}_\eps^{-1},\nabla\recop{\uu}{\eps}(\hat \uu))
-\WD(\res{z}_\eps{\circ}\mathrm{T}_\eps^{-1},\nabla\uu_\eps)
\,\mathrm{d}x\Big)\\
&\leq \limsup_{\eps\to0}
\int_{\Omega_\mathrm{D}^\eps}
\WD(\recop{\res{z}}{\eps}(\hat{\res{z}}){\circ}\mathrm{T}_\eps^{-1},\nabla\recop{\uu}{\eps}(\hat \uu))\,\mathrm{d}x
-\liminf_{\eps\to0}\int_{\Omega_\mathrm{D}^\eps}
\WD(\res{z}_\eps{\circ}\mathrm{T}_\eps^{-1},\nabla\uu_\eps)
\,\mathrm{d}x\\
&\leq \limsup_{\eps\to0}
\int_{\Omega_\mathrm{D}^\eps}
\WD(\recop{\res{z}}{\eps}(\hat{\res{z}}){\circ}\mathrm{T}_\eps^{-1},\nabla\recop{\uu}{\eps}(\hat \uu)))\,\mathrm{d}x
-\int_{\GC}
\big(I_{\{0\}}({\res{z}}\JUMP{\uu})+\frac{\mu\EEE}{2}\big|\JUMP{\uu}\big|^2
\big)
\,\mathrm{d}x'
\,,
\end{split}
\end{equation*}
where the second estimate follows from the $\Gamma$-$\liminf$ estimate deduced in Proposition \ref{prop:limit-jump}. Hence, for \eqref{props-ens-new-2b} to hold it remains to show that 
\begin{equation}
\label{props-ens-new-2b-proof1}
\limsup_{\eps\to0}
\int_{\Omega_\mathrm{D}^\eps}
\WD(\recop{\res{z}}{\eps}(\hat{\res{z}}){\circ}\mathrm{T}_\eps^{-1},\nabla\recop{\uu}{\eps}(\hat \uu))\,\mathrm{d}x
\leq \int_{\GC}
\big(
I_{\{0\}}(\hat{\res{z}}\JUMP{\hat \uu})+\frac{\mu\EEE}{2}\big|\JUMP{\hat \uu}\big|^2
\big)
\,\mathrm{d}x'\,. 
\end{equation}
From the definition of the rescaling mapping $\mathrm{T}_\eps$ given in 
\eqref{def-resc} and the 
construction \eqref{mrs-new-z} it follows that $\gr^\eps(\recop{\res{z}}{\eps}(\hat{\res{z}})\circ\mathrm{T}_\eps^{-1})=\eps$ for all $\eps>0$ if $\hat{\res{z}}=0$. In view of the 
decomposition $\OD=A_\eps\cup B_\eps\cup C_\eps$ provided in \eqref{grad-mrs-z} for $\eps>0$ we can therefore decompose $\OD^\eps$ as 
$\OD^\eps=\mathrm{T}_\eps(A_\eps)\cup\mathrm{T}_\eps(B_\eps)
\cup\mathrm{T}_\eps(C_\eps)$. Thus, 
\begin{equation}
\label{Jump-1}
\begin{split}
&\int_{\OD^\eps}\gr^\eps(\recop{\res{z}}{\eps}(\hat{\res{z}}){\circ}\mathrm{T}_\eps^{-1})|\nabla\recop{\uu}{\eps}(\hat \uu)|^2\,\mathrm{d}x\\
&\leq 
\int_{\mathrm{T}_\eps(A_\eps)}\frac{\mu}{2}
\gr^\eps(N_\eps(\hat{\res{z}}){\circ}\mathrm{T}_\eps^{-1}{-}\delta_\eps)|\nabla\recop{\uu}{\eps}(\hat \uu)|^2\,\mathrm{d}x
+\int_{\mathrm{T}_\eps(B_\eps)}\frac{\mu}{2}(1+\eps)|\nabla\recop{\uu}{\eps}(\hat \uu)|^2\,\mathrm{d}x
+\int_{\mathrm{T}_\eps(C_\eps)}\frac{\mu}{2}\eps|\nabla\recop{\uu}{\eps}(\hat \uu)|^2\,\mathrm{d}x\,,
\end{split}
\end{equation}
where we used on $B_\eps$ that $0\leq \res{z}_\eps\leq1$ a.e.. 
\par
For a function $f\in \rmH^1(\OD)\cap \rmL^\infty(\OD)$ with  
let $L_f^c:=[f|_{\GC}=c]$ denote the level set of its trace $f|_{\GC}$ on $\GC$ for the value $c\in\R$.  
By the properties of the convolution there holds 
$L_{N_\eps(\hat{\res{z}})}^0\subset L_{\hat{\res{z}}}^0$ 
for all $\eps>0$ and we find for 
$\res{x}\in B_\eps=[N_\eps(\hat{\res{z}})-\delta_\eps>\res{z}_\eps]$ 
that $\gr^\eps(N_\eps(\hat{\res{z}})(\res{x})-\delta_\eps)>\eps$ and that  $B_\eps\cap\GC\subset\GC\backslash L_{\hat{\res{z}}}^0$. Similarly, we see that 
\begin{equation*}
A_\eps=[0\leq N_\eps(\hat{\res{z}})-\delta_\eps\leq \res{z}_\eps]
=[\delta_\eps\leq N_\eps(\hat{\res{z}})\leq \res{z}_\eps+\delta_\eps]
\subset[\delta_\eps\geq N_\eps(\hat{\res{z}})]\subset[0<N_\eps(\hat{\res{z}})]\,,
\end{equation*}
hence also $A_\eps\cap\GC\subset\GC\backslash L_{\hat{\res{z}}}^0,$ and $\gr^\eps(N_\eps(\hat{\res{z}})(\res{x}))-\delta_\eps)>\eps$ for $\res{x}\in A_\eps$. For 
$C_\eps=\OD\backslash(A_\eps\cap B_\eps)=[N_\eps(\hat{\res{z}})-\delta_\eps<0]$ have $\gr^\eps(N_\eps(\hat{\res{z}})(\res{x})-\delta_\eps)=\eps$ as well as  
$L_{\hat{\res{z}}}^0\subset C_\eps\cap\GC$. Thanks to these observations we can further estimate the terms on the right-hand side of \eqref{Jump-1}, thus arriving at 
\begin{equation}
\label{Jump-2} 
\begin{split}
&\int_{\OD^\eps}
\gr^\eps(\recop{\res{z}}{\eps}(\hat{\res{z}}){\circ}\mathrm{T}_\eps^{-1})
|\nabla\recop{\uu}{\eps}(\hat \uu)|^2
\,\mathrm{d}x\\
&\leq 
\int_{L_{\hat{\res{z}}}^0}\int_{-\eps}^\eps
\eps\,\frac{\mu}{2}|\nabla\recop{\uu}{\eps}(\hat \uu)|^2
\,\mathrm{d}x_1\,\mathrm{d}x'
+\int_{\GC\backslash L_{\hat{\res{z}}}^0}\int_{-\eps}^\eps
(1+\eps)\,\frac{\mu}{2}|\nabla\recop{\uu}{\eps}(\hat \uu)|^2
\,\mathrm{d}x_1\,\mathrm{d}x'\,, 
\end{split}
\end{equation}
where $|\nabla\recop{\uu}{\eps}(\hat \uu)|^2
\leq 2(|\partial_{x_1}\recop{\uu}{\eps}(\hat \uu)|^2
+|\nabla_{x'}\recop{\uu}{\eps}(\hat \uu)|^2)$.
\par
\begin{subequations}
\label{Jump-3}
With similar arguments as in \cite[Lemma 5, Step 3]{MiRoTh10DDNE} we show now that the second term 
in \eqref{Jump-2} satisfies 
\begin{equation}
\label{Jump-3-1}
\limsup_{\eps\to0}
\int_{\GC\backslash L_{\hat{\res{z}}}^0}\int_{-\eps}^\eps
(1+\eps)\,\frac{\mu}{2}|\nabla\recop{\uu}{\eps}(\hat \uu)|^2
\,\mathrm{d}x_1\,\mathrm{d}x'=0\,,
\end{equation}
whereas the first term in \eqref{Jump-2} results in 
\begin{equation}
\label{Jump-3-2}
\limsup_{\eps\to0}
\int_{L_{\hat{\res{z}}}^0}\int_{-\eps}^\eps
\eps\,\frac{\mu}{2}|\nabla\recop{\uu}{\eps}(\hat \uu)|^2
\,\mathrm{d}x_1\,\mathrm{d}x'
 \leq \EEE
\int_{L_{\hat{\res{z}}}^0}
\frac{\mu}{2}\big|\JUMP{\hat\uu}\big|^2
\,\mathrm{d}x'\,.
\end{equation}
\end{subequations}
Then, applying both $\limsup$-results \eqref{Jump-3} in \eqref{Jump-2} results in the claim \eqref{props-ens-new-2b-proof1}. 
\par
{\bf Proof of \eqref{Jump-3-1}: }Indeed, for \eqref{Jump-3-1} we can one by one repeat the arguments of \cite[Lemma 5, Step 3, (3.39)-(3.42)]{MiRoTh10DDNE}, which we do here for the sake of completeness. Doing so, it turns out that the change from $\gamma>1$ to $\gamma=1$ 
has no effect on the outcome in this limit passage $\eps\to0$, since in the factor $(1+\eps)$ on the left-hand side of \eqref{Jump-3-1} the constant $1$ dominates, which was also the  case for $\gamma>1$ discussed in \cite[Lemma 5, Step 3, (3.39)-(3.42)]{MiRoTh10DDNE}.   
\par
For notational simplicity let $\hat\uu^\pm$ denote also their even extensions to $\Omega$ by reflection at $x_1=0$. Then $\hat\uu^\pm\in \rmH^1(\Omega,\R^d)$. In view of  
$0<(\eps\pm x_1)/(2\eps)<1$ on $I_\eps^-\cup I_\eps^+$ construction \eqref{mrs-new-u} gives 
\begin{equation}
\label{Jump-3-1-1}
\|\nabla_{x'}\recop{\uu}{\eps}(\hat \uu)\|_{\rmL^2(\OD^\eps,\R^{(d-1)\times(d-1)})}
\leq2\|\nabla\hat\uu^-\|_{\rmL^2(I_\eps^-\times\GC,\R^{(d-1)\times(d-1)})}
+2\|\nabla\hat\uu^+\|_{\rmL^2(I_\eps^+\times\GC,\R^{(d-1)\times(d-1)})}\to0\,.
\end{equation}
Moreover, $\partial_{x_1}\recop{\uu}{\eps}(\hat \uu)=G_1^\eps+G_2^\eps$ with 
\begin{equation}
G_1^\eps
=\frac{\eps-x_1}{2\eps}\partial_{x_1}\hat\uu^-
+\frac{\eps+x_1}{2\eps}\partial_{x_1}\hat\uu^+
\quad\text{and}\quad
G_2^\eps=\frac{1}{2\eps}(\hat\uu^+-\hat\uu^-)\,.
\end{equation}
Like in \eqref{Jump-3-1-1} we also conclude that 
\begin{equation}
\label{Jump-3-1-2}
\|G_1^\eps\|_{\rmL^2(\OD^\eps,\R^{(d-1)\times(d-1)})}\to0
\end{equation}
as $\eps\to0,$ whereas $G_2^\eps$ needs further consideration. 
\par
Since $\hat\uu\in \rmH^1(\Omega\backslash L_{\hat{\res{z}}}^0,\R^d)$ there holds for a.e.\ $x'\in\GC\backslash L_{\hat{\res{z}}}^0$ that $\hat\uu^+(0,x')=\hat\uu^-(0,x')$ and hence, we obtain with the aid of H\"older's inequality
\begin{equation}
\label{Jump-3-1-3}
\begin{split}
\big|
\hat\uu^+(x_1,x')-\hat\uu^-(x_1,x')
\big|
&\leq 
\Big|\int_0^{x_1}\partial_\xi\hat\uu^+(\xi,x')\,\mathrm{d}\xi\Big|
+\Big|\int_0^{x_1}\partial_\xi\hat\uu^-(\xi,x')\,\mathrm{d}\xi\Big|
\\
&\leq |x_1|^{1/2}\Big(
\|\partial_{x_1}\hat\uu^+(\cdot,x')\|_{\rmL^2(I_\eps^+,\R^d)}
+\|\partial_{x_1}\hat\uu^-(\cdot,x')\|_{\rmL^2(I_\eps^-,\R^d)}
\Big)
\\
&=:A(\partial_{x_1}\hat\uu^+,\partial_{x_1}\hat\uu^+)
\,.
\end{split}
\end{equation}
Dividing by $2\eps$ and integrating over 
$(x_1,x')\in (I_\eps^-\cup I_\eps^+)\times\GC\backslash L_{\hat{\res{z}}}^0$ yields  
\begin{equation}
\label{Jump-3-1-4}
\begin{split}
\|G_2^\eps\|_{\rmL^2((I_\eps^-\cup I_\eps^+)\times(\GC\backslash L_{\hat{\res{z}}}^0),\R^d)}^2
&\leq
\int_{\GC}\int_{-\eps}^\eps\frac{|x_1|}{4\eps^2}
\Big(
\|\partial_{x_1}\hat\uu^+(\cdot,x')\|_{\rmL^2(I_\eps^+,\R^d)}
+\|\partial_{x_1}\hat\uu^-(\cdot,x')\|_{\rmL^2(I_\eps^-,\R^d)}
\Big)^2\,\mathrm{d}x_1\,\mathrm{d}x'
\\
&\leq \frac14\EEE\Big(
\|\partial_{x_1}\hat\uu^+\|_{\rmL^2(\OD^\eps,\R^d)}^2
+\|\partial_{x_1}\hat\uu^-\|_{\rmL^2(\OD^\eps,\R^d)}^2
\Big)\;\to0
\end{split}
\end{equation}
as $\eps\to0$. 
Putting together \eqref{Jump-3-1-1}, \eqref{Jump-3-1-2}, and \eqref{Jump-3-1-4} 
we conclude \eqref{Jump-3-1}. 
\par
{\bf Proof of \eqref{Jump-3-2}: }Here we follow the arguments of \cite[Lemma 5, Step 3, (3.43)-(3.44)]{MiRoTh10DDNE}. Therein, the term on the left-hand side of \eqref{Jump-3-2} was also shown to vanish as $\eps\to0$ due to the assumption $\gamma>1$. As we will see here below, the fact that now 
$\gamma=1$ prevents this term from vanishing and leads to the appearance of the interfacial jump term in the surface energy. 
\par
For $x'\in L_{\hat{\res{z}}}^0$  it holds \EEE $\hat\uu^+=(0,x')\not=\hat\uu^-(0,x')$ in general. Then we find
\begin{equation}
\label{Jump-3-2-1}
\begin{split}
\big|
\hat\uu^+(x_1,x')-\hat\uu^-(x_1,x')
\big|
&\leq 
\big|\JUMP{\hat\uu}(x')\big|
+\Big|\int_0^{x_1}\partial_\xi\hat\uu^+(\xi,x')\,\mathrm{d}\xi\Big|
+\Big|\int_0^{x_1}\partial_\xi\hat\uu^-(\xi,x')\,\mathrm{d}\xi\Big|
\\
&\leq\big|\JUMP{\hat\uu}(x')\big|+A(\partial_{x_1}\hat\uu^+,\partial_{x_1}\hat\uu^+)
\end{split}
\end{equation}
with $A(\partial_{x_1}\hat\uu^+,\partial_{x_1}\hat\uu^+)$ from \eqref{Jump-3-1-3}. 
Dividing by $2\eps$ and integrating over 
$\rmL^2((I_\eps^-\cup I_\eps^+)\times L_{\hat{\res{z}}}^0,\R^d)$ leads to 
\begin{equation}
\label{Jump-3-2-2}
\begin{split}
\|G_2^\eps\|_{\rmL^2((I_\eps^-\cup I_\eps^+)\times L_{\hat{\res{z}}}^0,\R^d)}^2
&\leq \frac{2}{4\eps}\|\JUMP{\hat\uu}\|_{\rmL^2(\GC,\R^d)}^2
+\frac{2}{4\eps^2}\int_{L_{\hat{\res{z}}}^0}\int_{-\eps}^\eps A(\partial_{x_1}\hat\uu^+,\partial_{x_1}\hat\uu^+)\,\mathrm{d}x_1\,\mathrm{d}x'
\\
&\leq
\frac{1}{2\eps}\|\JUMP{\hat\uu}\|_{\rmL^2(\GC,\R^d)}^2
+\frac12\EEE\|\partial_{x_1}\hat\uu^+\|_{\rmL^2(\OD^\eps,\R^d)}^2
+\frac12\EEE\|\partial_{x_1}\hat\uu^-\|_{\rmL^2(\OD^\eps,\R^d)}^2\,,
\end{split}
\end{equation}
where we used \eqref{Jump-3-1-4} to arrive at the second and the third term, and from there we also see that these two terms vanish as $\eps\to0$. Now \eqref{Jump-3-2} can be concluded thanks to the additional prefactor $\eps$ that stems from the fact that $\gr^\eps(\recop{\res{z}}{\eps}(\hat{\res{z}}))=\eps$ on $L_{\hat{\res{z}}}^0$. 
More precisely, together with  \eqref{Jump-3-2-2} and \eqref{Jump-3-1-2} we conclude 
\begin{equation*}
\begin{split}
\limsup_{\eps\to0}
\int_{L_{\hat{\res{z}}}^0}\int_{-\eps}^\eps
\eps\,\frac{\mu}{2}|\nabla\recop{\uu}{\eps}(\hat \uu)|^2
\,\mathrm{d}x_1\,\mathrm{d}x'
&\leq 
\limsup_{\eps\to0}
\eps\frac{\mu}{2} \big( 2 
\|G_1^\eps\|_{\rmL^2((I_\eps^-\cup I_\eps^+)\times L_{\hat{\res{z}}}^0,\R^d)}^2
+ 2  \|G_2^\eps\|_{\rmL^2((I_\eps^-\cup I_\eps^+)\times L_{\hat{\res{z}}}^0,\R^d)}^2
\big)
\\
&\leq
\limsup_{\eps\to0}
\eps \mu
\|G_1^\eps\|_{\rmL^2((I_\eps^-\cup I_\eps^+)\times L_{\hat{\res{z}}}^0,\R^d)}^2
+\limsup_{\eps\to0}
\eps \mu
\|G_2^\eps\|_{\rmL^2((I_\eps^-\cup I_\eps^+)\times L_{\hat{\res{z}}}^0,\R^d)}^2
\\
&\leq 
0+ \limsup_{\eps\to0} \eps\mu\Big(\frac{1}{2\eps}\|\JUMP{\hat\uu}\|_{\rmL^2(\GC,\R^d)}^2
+\frac12\|\partial_{x_1}\hat\uu^+\|_{\rmL^2(\OD^\eps,\R^d)}^2
+\frac12\|\partial_{x_1}\hat\uu^-\|_{\rmL^2(\OD^\eps,\R^d)}^2\Big)
\\
&=\int_{L_{\hat{\res{z}}}^0}
\frac{\mu}{2}\big|\JUMP{\hat\uu}\big|^2
\,\mathrm{d}x'
\,,
\end{split}
\end{equation*}
which gives \eqref{Jump-3-2} thanks to the brittle constraint \eqref{props-ens-new-2b-brittle}. 
\par
Hence the proof of statement \eqref{props-ens-new-2b} is finished.
\end{proof}

\section*{acknowledgements}    
The work of E.D. was supported by the Austrian Science Fund (FWF) through
projects 10.55776/Y1292, 10.55776/P35359, and 10.55776/F100800. 
For open access purposes, the authors have applied a CCBY public copyright license to any
accepted manuscript version arising from this submission. G.B.\ and R.R.\ acknowledge the support 
of GNAMPA (INDAM).  R.R.\ has also been supported by  an ERC grant (Project 101200514 — OPTiMiSE)\footnote{Funded by the European Union. Views and opinions expressed are, however, those of the author(s) only and do not necessarily reflect those of the European Union or the granting authority. Neither the European Union nor the granting authority can be held responsible for them}.
M.T.\ acknowledges the support by the German Research Foundation (DFG) within project 
B09 \emph{Materials with discontinuities on many scales} of CRC 1114 \emph{Scaling Cascades in Complex Systems} (project-number 235221301) 
and within project \emph{Nonlinear Fracture Dynamics: Modeling, Analysis, Approximation, and Applications} of Priority Programme SPP 2256 \emph{Variational Methods for Predicting Complex Phenomena in Engineering Structures and Materials} (project-number 441212523).   
           
           \medskip

    \bibliographystyle{alpha}
\bibliography{ricky_lit}

\end{document}